\theoremstyle{definition}
\newtheorem{theorem}{Theorem}[section]
\newtheorem{corollary}[theorem]{Corollary}
\newtheorem{lemma}[theorem]{Lemma}
\newtheorem{definition}[theorem]{Definition}
\newtheorem{remark}[theorem]{Remark}
\newtheorem{assumption}[theorem]{Assumption}
\newtheorem{example}[theorem]{Example}
\numberwithin{equation}{section}
\newcommand{\todoautc}[3][]{%
   \ifthenelse{\equal{#1}{}}{\todo[size=\scriptsize]{{\bf#2} #3}{}}{\todo[color=#1,size=\scriptsize]{{\bf#2} #3}{}}%
}
\newcommand{\linspan}{\mathop{\rm span}\nolimits}
\newcommand{\rest}{\left.\kern-2\nulldelimiterspace\right|_}
\newcommand{\norm}[2]{\left|#1\right|_{#2}}
\newcommand{\dnorm}[2]{\left\|#1\right\|_{#2}}
\newcommand{\zero}{{\mathbf0}}
\newcommand{\Id}{{\mathbf1}}
\newcommand{\indf}{1}
\newcommand{\ex}{\mathrm{e}}
\newcommand{\p}{\partial}
\newcommand*{\Bigcdot}{\raisebox{-.25ex}{\scalebox{1.25}{$\cdot$}}}
\newcommand{\clA}{{\mathcal A}}
\newcommand{\clB}{{\mathcal B}}
\newcommand{\clC}{{\mathcal C}}
\newcommand{\clE}{{\mathcal E}}
\newcommand{\clF}{{\mathcal F}}
\newcommand{\clH}{{\mathcal H}}
\newcommand{\clJ}{{\mathcal J}}
\newcommand{\clK}{{\mathcal K}}
\newcommand{\clL}{{\mathcal L}}
\newcommand{\clO}{{\mathcal O}}
\newcommand{\clT}{{\mathcal T}}
\newcommand{\clU}{{\mathcal U}}
\newcommand{\clX}{{\mathcal X}}
\newcommand{\bbN}{{\mathbb N}}
\newcommand{\bbR}{{\mathbb R}}
\newcommand{\bfA}{{\mathbf A}}
\newcommand{\bfB}{{\mathbf B}}
\newcommand{\bfC}{{\mathbf C}}
\newcommand{\bfD}{{\mathbf D}}
\newcommand{\bfE}{{\mathbf E}}
\newcommand{\bfF}{{\mathbf F}}
\newcommand{\bfG}{{\mathbf G}}
\newcommand{\bfH}{{\mathbf H}}
\newcommand{\bfJ}{{\mathbf J}}
\newcommand{\bfK}{{\mathbf K}}
\newcommand{\bfL}{{\mathbf L}}
\newcommand{\bfM}{{\mathbf M}}
\newcommand{\bfN}{{\mathbf N}}
\newcommand{\bfP}{{\mathbf P}}
\newcommand{\bfQ}{{\mathbf Q}}
\newcommand{\bfR}{{\mathbf R}}
\newcommand{\bfS}{{\mathbf S}}
\newcommand{\bfU}{{\mathbf U}}
\newcommand{\bfV}{{\mathbf V}}
\newcommand{\bfX}{{\mathbf X}}
\newcommand{\bfY}{{\mathbf Y}}
\newcommand{\bfZ}{{\mathbf Z}}
\newcommand{\fkB}{{\mathfrak B}}
\newcommand{\fkL}{{\mathfrak L}}
\newcommand{\fkR}{{\mathfrak R}}
\newcommand{\fkT}{{\mathfrak T}}
\newcommand{\rmD}{{\mathrm D}}
\newcommand{\bfc}{{\mathbf c}}
\newcommand{\bfj}{{\mathbf j}}
\newcommand{\bfn}{{\mathbf n}}
\newcommand{\bfu}{{\mathbf u}}
\newcommand{\bfv}{{\mathbf v}}
\newcommand{\rmc}{{\mathrm c}}
\newcommand{\rmd}{{\mathrm d}}
\newcommand{\rme}{{\mathrm e}}
\newcommand{\rmf}{{\mathrm f}}
\newcommand{\rmm}{{\mathrm m}}
\newcommand{\fkh}{{\mathfrak h}}
\newcommand{\ovlineC}[1]{\overline C_{\left[#1\right]}}
\definecolor{DarkBlue}{rgb}{0,0.08,0.45}
\definecolor{DarkRed}{rgb}{.65,0,0}
\definecolor{DarkGreen}{rgb}{.1,.45,.1}
\definecolor{applegreen}{rgb}{0.55, 0.71, 0.0}
\newcounter{mymac@matlab}
\newcommand{\matlab}{MATLAB%
   \ifnum\value{mymac@matlab}<1%
   \textregistered%
   \setcounter{mymac@matlab}{1}%
   \fi%
  }
\newcommand{\black}{ \color{black} }
\begin{document}
\title{Stabilization of nonautonomous linear parabolic-like equations: oblique projections versus Riccati feedbacks}
\author{S\'ergio S.~Rodrigues}

\begin{abstract}
An oblique projections based feedback stabilizability result in the literature is extended to a larger class of reaction-convection terms.
A discussion is presented including a comparison between 
explicit oblique projections based feedback controls and Riccati based feedback controls.
Advantages and limitations of each type of feedback are addressed as well as their finite-elements implementation.
Results of numerical simulations are presented comparing their stabilizing performances for the case of time-periodic dynamics. It is shown that the solution of the periodic Riccati based feedback can be computed iteratively.
\end{abstract}

\thanks{
\vspace{-1em}\newline\noindent
{\sc MSC2020}: 93B52, 93C50, 93C05, 93C20
\newline\noindent
{\sc Keywords}: exponential stabilization, Riccati feedback, oblique projection feedback, linear parabolic equations, finite-elements implementation
\newline\noindent
{\sc Address}: Johann Radon Institute for Computational and Applied Mathematics,
 \"OAW, Altenbergerstrasse 69, 4040 Linz, Austria.\newline\noindent
{\sc Email}: {\small\tt sergio.rodrigues@ricam.oeaw.ac.at}
}

\maketitle

\pagestyle{myheadings} \thispagestyle{plain} \markboth{\sc S. S.
Rodrigues}{\sc stabilization of nonautonomous linear parabolic equations}


\section{Introduction}
We consider controlled scalar linear parabolic equations as
\begin{subequations}\label{sys-y-parab}
\begin{align}
 &\tfrac{\p}{\p t} y +(-\nu\Delta+\Id) y+ay +b\cdot\nabla y=\textstyle\sum\limits_{j=1}^{M_0}u_j(t)\indf_{\omega_j},\\
  &\fkB y\rest{\p\Omega}=0,\quad
y(0)=y_0.
       \end{align}
 \end{subequations}
The state~$y$ is assumed to be defined in a bounded connected open spatial
subset~$\Omega\in\bbR^d$, with~$d\in\bbN_+\coloneqq\{1,2,\dots\}$ a positive integer. For simplicity, we assume that the domain~$\Omega$ is
 either
smooth or a convex polygon. The temporal interval is the semiline~$\bbR_+\coloneqq (0,+\infty) $. Hence, the state
is a function $y=y(x,t)$, defined for~$(x,t)\in \Omega\times\bbR_+$. The operator~$\fkB$ sets the
conditions
on the boundary~$\p\Omega$ of~$\Omega$,
\begin{align}
  \fkB &=\Id,&&\quad\mbox{for Dirichlet boundary conditions},\notag\\
  \fkB &=\bfn\cdot\nabla=\tfrac{\p}{\p\bfn},&&\quad\mbox{for Neumann boundary conditions,}\notag
\end{align}
where~$\bfn=\bfn(\bar x)$ stands for the outward unit normal vector to~$\p\Omega$, at~$\bar x\in\p\Omega$.
The functions $a=a(x,t)$ and $b=b(x,t)$  are assumed to satisfy
\begin{align}
 &a\in L^\infty(\Omega\times\bbR_+),\qquad b\in L^\infty(\Omega\times\bbR_+)^d.\label{assum.abf.parab}
\end{align}
 The vector function~$u\in L^2(\bbR_+,\bbR^{M_0})$, where~$M_0\in\bbN_+$, is a control input at our disposal, and our actuators are the indicator functions of given open subsets~$\omega_i$,
\begin{equation}\label{indf-w}
\indf_{\omega_i}(x)\coloneqq\begin{cases}
 1,&\mbox{ if }x\in\omega_i,\\
 0,&\mbox{ if }x\in\Omega\setminus\omega_i,
 \end{cases}\qquad \omega_i\subseteq\Omega.
\end{equation}
We assume that the family of actuators is linearly independent,
\begin{equation}\label{setUM}
  U_{M_0}=\linspan\{\indf_{\omega_j}\mid 1\le j\le M_0\},\qquad\clU_{M_0}\coloneqq\linspan U_{M_0},\qquad \dim\clU_{M_0}=M_0.
\end{equation}

In order to shorten the notation we define the spaces
\begin{align}
H^2_\fkB(\Omega)&\coloneqq\{h\in  H^2(\Omega)\mid \fkB h\rest{\p\Omega}=0\},
\mbox{ for }\fkB\in\{\Id,\tfrac{\p}{\p\bfn}\},\notag
\intertext{and}
V_\Id(\Omega)&\coloneqq \{h\in  H^1(\Omega)\mid h\rest{\p\Omega}=0\},\qquad
V_{\frac{\p}{\p\bfn}}(\Omega)\coloneqq H^1(\Omega),\notag
\end{align}
and set the spaces
\begin{equation}\label{parab.Hspaces}
 H\coloneqq L^2(\Omega),\quad V\coloneqq V_{\fkB},\quad\mbox{and}\quad \rmD(A)\coloneqq H^2_\fkB(\Omega),
\end{equation}
and the operators~$A\in\clL(V,V')$  and~$A_{\rm rc}=A_{\rm rc}(t)\in\clL(V,H)$, with
\begin{equation}\label{parab.oper}
\langle Ay,z\rangle_{V',V}\coloneqq\nu(\nabla y,\nabla z)_{(H)^d}+(y,z)_{H},\qquad
\langle A_{\rm rc} y,z\rangle_{V',V}\coloneqq(ay +b\cdot\nabla y,z)_{H}.
\end{equation}

We discuss aspects related to the computation of
linear stabilizing feedback input controls in the form~$u(t)=\clK(t)y(t)\in\bbR^{M_0}$ depending on the state~$y(t)$, at time~$t\ge0$. More precisely, we shall compare explicitly given oblique projection based feedbacks with the classical Riccati based feedbacks. Since the general class of parabolic-like equations considered in~\cite{KunRod19-cocv} does not include~\eqref{sys-y-parab}, we shall first extend the theoretical result in~\cite{KunRod19-cocv}, on stabilizability by means of an oblique projections based feedback, to a more general class of abstract parabolic-like equations,
\begin{align}\label{sys-y-intro-abst}
 \dot y +Ay+A_{\rm rc}y =Bu,\qquad y(0)= y_0,
\end{align}
with the control operator~$B=B_\Phi$ as
\begin{equation}\label{B_Phi}
Bu\coloneqq \textstyle\sum\limits_{j=1}^{M_0}u_j(t)\Phi_j,
\end{equation} 
where~$A$ and~$A_{\rm rc}$ will play, respectively, the roles of~$-\nu\Delta+\Id$  and~$a\Id +b\cdot\nabla$, and where the functions~$\Phi_j$ will play the role of actuators.
Again, we  assume that the family~$\{\Phi_j\mid 1\le j\le {M_0}\}$ of actuators is linearly independent.

So, we shall be looking for time-dependent feedback control operators~$\clK(t)\in\clL(V,\bbR^{M_0})$, giving us the control input~$u(t)=\clK(t)y(t)$ such that the (norm of the) solution of the system
\begin{align}\label{sys-y-intro-K}
 \dot y +Ay+A_{\rm rc}y =B\clK y,\qquad y(0)= y_0,
\end{align}
satisfies, for 
suitable constants~$ \varrho\ge1$ and~$\mu>0$, the inequality
\begin{align}\label{goal-intro}
 \norm{y(t)}{H}\le  \varrho\ex^{-\mu(t-s)}\norm{y(s)}{H},\quad\mbox{for all}\quad t\ge s\ge0,
 \quad\mbox{and all}\quad y_0\in H.
\end{align}
The details shall be given in Theorem~\ref{T:stabOP}.

The discussion on this manuscript is focused on nonautonomous systems. Comparing
Riccati based feedbacks to oblique projections based feedbacks, the former require the computation of the solution of a Riccati equation, while the latter require the computation of a suitable oblique projection~$P_{\clU_{M_0}}^{Y}$ onto the linear span~$\clU_{M_0}$ of the actuators along along a suitable auxiliary closed subspace~$Y$ of~$H$.
\begin{definition}
Let~$X$ and ~$Y$ be closed subspaces of a Hilbert space~$\clH$. We write $\clH=X\oplus Y$ if~$\clH=X+Y$ and~$X\bigcap Y=\{0\}$. If~$\clH=X\oplus Y$,
the (oblique) projection~$P_X^Y\in\clL(\clH,X)\subseteq\clL(\clH)$ in~$\clH$ onto~$X$ along~$Y$ is defined as~$P_X^Yh\coloneqq h_X$, where~$h_X$ is defined by the relations~$(h_X,h_Y)\in X\times Y$ and~$h=h_X+h_Y$.
\end{definition}

The computation of the Riccati feedback in the entire time interval~$[0,+\infty)$ is likely not possible for general nonautonomous systems, hence we shall restrict the comparison to nonautonomous time-periodic dynamics. This brings us to another theoretical contribution of this manuscript where we shall show that the solution of the periodic Riccati equation can be found by an iterative process. The details shall be given in Theorem~\ref{T:convRicPer}.

We focus on general rather
than on specific numerical aspects of each feedback.
We shall see that oblique projection feedbacks are an  interesting alternative to Riccati feedbacks
when there are no restrictions
on the number and location of actuators and on the total energy spent during the stabilization process.

As a motivation, note that the free dynamics of system~\eqref{sys-y-intro-K} (i.e., with~$\clK=\zero$)  can be unstable,  indeed the norm~$\norm{y(t)}{H}$ of its solution may
diverge exponentially to~$+\infty$
as~$t\to+\infty$, for some pairs~$(A,A_{\rm rc})$.
Therefore, we need to look for an input feedback control operator~$\clK$
 in order to achieve stability.
The reason to consider only a finite number~$M_0$ of actuators is motivated mainly by the fact that in real world applications we will likely have only a finite number of actuators at our disposal.

The diffusion-like operator~$A$ is assumed to be independent of time. As we have said, we focus on nonautonomous systems, where~$A+A_{\rm rc}=A+A_{\rm rc}(t)$ is
allowed to be time-dependent.
In the autonomous case, where~$A_{\rm rc}$ is
time-independent,  the spectral properties of the operator~$A+A_{\rm rc}$  can play a crucial role in the
derivation of stabilizability results,~\cite{BarbuTri04}. Such spectral properties are not
an appropriate tool to deal with the nonautonomous case, as shown by the examples in~\cite{Wu74}.
For general nonautonomous systems, in~\cite{BarRodShi11} the spectral arguments are replaced by 
suitable {\em truncated} observability inequalities for the adjoint linear system. This
approach makes direct
use of the exact null controllability of the system (by means of infinite-dimensional controls). More recently,
a different approach is proposed in~\cite{KunRod19-cocv}, using suitable oblique projections
in the Hilbert space~$H$. 
Prior to the theoretical results in these works,  works have been done towards
the development of numerical methods motivated by stabilization of nonautonomous systems;
see~\cite{Kornev06}. 

The problem of stabilization (to zero) of nonautonomous systems appears, for example,
when we want to stabilize the system to a time-dependent trajectory; see~\cite{BarRodShi11}. Similarly,
the problem of stabilization (to zero) of autonomous systems appears, for example,
when we want to stabilize the system to a time-independent trajectory (steady state, equilibrium);
see~\cite{BarbuTri04}.
We refer the reader to~\cite{Lunardi91,BadMitRamRay20}, for works focused on
the stabilization to {\em time-periodic} trajectories (or, the stabilization to zero of time-periodic dynamical systems), a case where
 an interesting argument
 shows that the spectral properties
 of the so called Poincar\'e mapping,
can be used to investigate the stabilizability of the system, and we can use arguments inspired
in the ones used in the autonomous case.

One reason the stabilization to general time-dependent trajectories is important
is that
neither steady states nor time-periodic solutions will exist
if our free dynamical system is subject to nonperiodic time-dependent external forces.
However,
when steady states do exist, then it is natural
to consider their steady
behavior as a desired one, which makes them natural targeted
solutions. This is a reason many works are dedicated to the 
stabilization to these particular trajectories.
We refer the reader to~\cite{BarbuTri04,BarbuLasTri06,Barbu11,RaymThev10,BadTakah11,KrsticMagnVazq08,KrsticMagnVazq09,Lefter09,Barbu_TAC13,BreitenKunisch14,Munteanu12,Munteanu3D12,Raymond19,NgomSeneLeRoux15,Morris11}.

For real world applications, the knowledge of the existence of a stabilizing feedback
input control operator~$\clK$ is not enough,
it is equally important to know how to compute and implement such operators.
The most popular of stabilizing feedbacks is the classical Riccati based one, which allows us to minimize
a classical quadratic cost functional, representing the total energy spent during the stabilization process. 
A considerable number of works have been dedicated to the investigation of
theoretical and numerical aspects of such
feedback, we refer the reader to
\cite{BennerBujaKursSaak20,Heiland16,CurtainPritchard76,UngureanuDragan12,BanKunisch84,BanschBennerSaakWech15,BennerLaubMehrmann97,Benner06,KunkelMehrmann90,Zabczyk75}
and references therein.

The numerical computation of Riccati feedbacks consists in solving a suitable
nonlinear matrix Riccati equation.
Finding such solution is an interesting nontrivial  numerical task. The solution is often found through a
Newton-like iteration, and one particular difficulty relies on the choice of an initial guess
for starting such iteration. We shall recall/propose a strategy to deal with such problem.

The computation of Riccati feedbacks becomes more 
expensive (e.g., time consuming) as the size of the matrix increases,
and can become unfeasible for
accurate finite-elements approximations of parabolic equations.
Ways to circumvent this fact  can be either the use of an appropriate model reduction, or
to compute it in a coarser mesh, or simply to look for alternative  feedbacks.
Here we shall consider the last two approaches (the first one of which can also be seen as a simple model reduction). An alternative to Riccati, including the case of general nonautonomous systems, is the explicit feedback  introduced in~\cite{KunRod19-cocv},
which involves a suitable oblique projection operator, and whose numerical implementation requires essentially
the discretization of such projection.

Though we focus on Riccati and oblique projections feedbacks,
we would like to briefly mention other feedbacks. Namely, the feedback
in~\cite{Barbu_TAC13}, also presented as an alternative
to Riccati, in the context of stabilization of {\em autonomous} systems
by means of {\em boundary} controls (see~\cite{HalanayMureaSafta13} for related simulations); the
feedback in~\cite{AzouaniTiti14,LunasinTiti17},  directly exploiting
the existence of suitable determining parameters (e.g., nodes and Fourier modes)
for parabolic-like equations, see also~\cite{CockburnJonesTiti97};  the backstepping approach
in~\cite{KrsticMagnVazq08} for boundary controls, in
\cite{WangWoittennek13,WoittennekWangKnueppel14} for controls on transmission conditions,
and in~\cite{TsubakinoKrsticHara12} for internal controls with a particular shape/profile.

\bigskip
\noindent
{\em Contents.}
The rest of the paper is organized as follows. In section~\ref{S:Stabil} we present the theory involved in the construction of an explicit stabilizing feedback based on oblique projections, prove the main theoretical results, and recall the classical Riccati feedback. In section~\ref{S:NumerImpl} we  discuss a finite-elements
numerical implementation for both Riccati  and oblique projections feedbacks.
Numerical simulations are presented in section~\ref{S:LocAct}, and concluding remarks
are gathered in sections~\ref{S:remarks} and~\ref{S:conclusion}. 

\section{Stabilizability of nonautonomous parabolic-like equations}\label{S:Stabil}
The abstract  form~\eqref{sys-y-intro-K} for~\eqref{sys-y-parab} (with a feedback control) lies in a class of controlled linear parabolic-like systems as
\begin{align}\label{sys-y-F-abst}
 \dot y +Ay+A_{\rm rc}y =\clF y,\qquad y(0)= y_0\in H,
\end{align}
under general assumptions on the operators~$A$ and~$A_{\rm rc}$, where we have written~$\clF=B\clK$. Note that since~$B\in\clL(\bbR^{M_0},\clU_{M_0})$ is an isomorphism, looking for the input control feedback operator~$\clK\in\clL(V,\bbR^{M_0})$ is equivalent to looking for the feedback operator~$\clF\in\clL(V,\clU_{M_0})$. Here~$\clU_{M_0}$ is the space spanned by the family~$U_{M_0}$ of linearly independent actuators (cf.~\eqref{setUM} and~\eqref{B_Phi})
\begin{equation}\label{setUM-Phi}
  U_{M_0}=\linspan\{\Phi_j\mid 1\le j\le {M_0}\}\subset H,\quad\clU_{M_0}\coloneqq\linspan U_{M_0},\quad \dim\clU_{M_0}={M_0}.
\end{equation}

For a given subset~$S\subset \clH$ of a separable real Hilbert space~$\clH$, we denote the orthogonal complement of~$S$ by~
\[
S^{\perp\clH}\coloneqq\{h\in\clH\mid (h,s)_\clH=0\mbox{ for all }s\in S\}.
\]
For simplicity, in the case~$\clH=H$ is our pivot space, we denote~$S^\perp\coloneqq S^{\perp H}$.

\subsection{Assumptions}
Hereafter the evolution of~\eqref{sys-y-F-abst} is considered in a pivot Hilbert space~$H$,  $H=H'$. 
All Hilbert spaces are assumed real and separable.

\begin{assumption}\label{A:A0sp}
$V\subset H$ is a Hilbert space, $A\in\clL(V,V')$ is symmetric, and $(y,z)\mapsto\langle Ay,z\rangle_{V',V}$ is a
 complete scalar product on~$V.$
\end{assumption}

Hereafter, $V$ is endowed with the scalar product~$(y,z)_V\coloneqq\langle Ay,z\rangle_{V',V}$,
which again makes~$V$ a Hilbert space.
Necessarily, $A\colon V\to V'$ is an isometry.
\begin{assumption}\label{A:A0cdc}
The inclusion $V\subseteq H$ is dense, continuous, and compact.
\end{assumption}

Necessarily, we have that
\[
 \langle y,z\rangle_{V',V}=(y,z)_{H},\quad\mbox{for all }(y,z)\in H\times V,
\]
and also that the operator $A$ is densely defined in~$H$, with domain $\rmD(A)$ satisfying
\[
\rmD(A)\xhookrightarrow{\rm d,\,c} V\xhookrightarrow{\rm d,\,c} H\xhookrightarrow{\rm d,\,c} V'\xhookrightarrow{\rm d,\,c}\rmD(A)'.
\]
Further,~$A$ has a compact inverse~$A^{-1}\colon H\to H$, and we can find a nondecreasing
system of (repeated accordingly to their multiplicity) eigenvalues $(\alpha_n)_{n\in\bbN_+}$ and a  basis of
eigenfunctions $(e_n)_{n\in\bbN_+}$ as
\begin{equation}\label{eigfeigv}
0<\alpha_1\le\alpha_2\le\dots\le\alpha_n\to+\infty, \quad Ae_n=\alpha_n e_n.
\end{equation}

 We can define, for every $\zeta\in\bbR$, the fractional powers~$A^\zeta$, of $A$, by
 \[
  y=\sum_{n=1}^{+\infty}y_ne_n,\quad A^\zeta y=A^\zeta \sum_{n=1}^{+\infty}y_ne_n\coloneqq\sum_{n=1}^{+\infty}\alpha_n^\zeta y_n e_n,
 \]
 and the corresponding domains~$\rmD(A^{|\zeta|})\coloneqq\{y\in H\mid A^{|\zeta|} y\in H\}$, and
 $\rmD(A^{-|\zeta|})\coloneqq \rmD(A^{|\zeta|})'$.
 We have that~$\rmD(A^{\zeta})\xhookrightarrow{\rm d,\,c}\rmD(A^{\zeta_1})$, for all $\zeta>\zeta_1$,
 and we  see that~$\rmD(A^{0})=H$, $\rmD(A^{1})=\rmD(A)$, $\rmD(A^{\frac{1}{2}})=V$.

\begin{assumption}\label{A:A1}
For almost every~$t>0$ we have~$A_{\rm rc}(t)\in\clL(H, V')+\clL(V, H)$,
and we have a uniform bound, that is, $\norm{A_{\rm rc}}{L^\infty(\bbR_0,\clL(H,V')+\clL(V, H))}\eqqcolon C_{\rm rc}<+\infty.$
\end{assumption}

Below, it is convenient to consider the number of actuators as~$M_0=\sigma(M)$, as a term of a subsequence of positive integers.

\begin{assumption}\label{A:poincare}
There exists a sequence~$(U_{\sigma(M)},E_{\sigma(M)})_{M\in\bbN_+}$, where for each~$M$,
\[
{U_{\sigma(M)}}=\{\Phi_{M,j}\mid 1\le j\le {\sigma(M)}\}\subset  H
\] is a set of actuators and
\[
{E_{\sigma(M)}}=\{e_{M,j}\mid 1\le j\le \sigma(M)\}\subset V
\]
is a set of auxiliary eigenfunctions, satisfying the following:
\begin{enumerate}[noitemsep,topsep=5pt,parsep=5pt,partopsep=0pt,leftmargin=3em]%
\renewcommand{\theenumi}{\ref{A:poincare}({\roman{enumi}})} %
\renewcommand{\labelenumi}{({\roman{enumi}})}%
\item\label{A:poincaresigma}
$\sigma\colon\bbN_+\to\bbN_+$
is a strictly increasing function,
\item\label{A:poincareDS}
$H=\clU_{\sigma(M)}\oplus\clE_{\sigma(M)}^\perp$, for all~$M\in\bbN_+$, with~$ \clU_{\sigma(M)}\coloneqq\linspan{U_{\sigma(M)}}$, $\clE_{\sigma(M)}\coloneqq\linspan{E_{\sigma(M)}}$, and  $\dim\,{\clU_{\sigma(M)}}=  \dim\,{\clE_{\sigma(M)}}={\sigma(M)}$,
\item\label{A:poincareOP}
we have that~$\sup\limits_{M\in\bbN_+}\norm{P_{\clU_{\sigma(M)}}^{\clE_{\sigma(M)}^\perp}}{\clL(H)}\eqqcolon C_P<+\infty$,
\item\label{A:poincarexi}
defining, for each~$M\in\bbN_+$,
the  Poincar\'e-like constants
\[
\overline{\xi}_M\coloneqq\inf_{\varTheta\in (V\bigcap\clE_{\sigma(M)}^\perp)\setminus\{0\}}
\tfrac{\norm{\varTheta}{V}^2}{\norm{\varTheta}{H}^2}\quad\mbox{and}\quad
\underline{\xi}_M\coloneqq\sup_{\theta\in (V\bigcap\clE_{\sigma(M)})\setminus\{0\}}
\tfrac{\norm{\theta}{V}^2}{\norm{\theta}{H}^2},
\]
we have that
$
\lim\limits_{M\to+\infty}\overline{\xi}_M=+\infty\quad\mbox{and}\quad\sup\limits_{M\in\bbN_+}\overline{\xi}_M^{-1}\underline{\xi}_M\eqqcolon C_\clE<+\infty.
$
\end{enumerate}
\end{assumption}

\begin{remark}
Assumption~\ref{A:A1} is weaker than its analogous in~\cite[Assum.~2.3]{KunRod19-cocv} where it was considered~$A_{\rm rc}(t)\in\clL(H, V')$. Note that, if ~$b(\Bigcdot,t)\in L^\infty(\Omega)$ (cf.~\eqref{assum.abf.parab}) with~$\nabla\cdot b$ not regular enough, then~$y\mapsto b(\Bigcdot,t)\cdot\nabla y$ (cf.~\eqref{sys-y-parab}) is in~$\clL(V, H)\setminus\clL(H, V')$.
\end{remark}

\subsection{Oblique projection stabilizing feedbacks}
Here, we construct an explicit stabilizing feedback. We show that the explicit feedback proposed in~\cite{KunRod19-cocv} can be used for more general reaction-convection terms as in Assumption~\ref{A:A1}.
\begin{lemma}\label{L:xiAV'}
For~$\varTheta\in \clE_{\sigma(M)}^\perp\setminus\{0\}$ and~$\theta\in\clE_{\sigma(M)}\setminus\{0\}$, we have the relations $\tfrac{\norm{\varTheta}{H}^2}{\norm{\varTheta}{V'}^2}\ge\overline{\xi}_M$ and $
\tfrac{\norm{\theta}{H}^2}{\norm{\theta}{V'}^2}\le\underline{\xi}_M.$
\end{lemma}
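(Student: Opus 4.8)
The plan is to exploit the structural fact, granted by Assumption~\ref{A:poincare}, that the auxiliary set~$E_{\sigma(M)}$ consists of eigenfunctions of~$A$. Consequently~$\clE_{\sigma(M)}$ is a sum of subspaces of eigenspaces of~$A$, hence invariant under~$A$ and under its fractional powers~$A^{\pm1/2}$, and the same holds for its orthogonal complement~$\clE_{\sigma(M)}^\perp$. This reducing structure is precisely what lets me convert the Rayleigh quotients defining~$\overline{\xi}_M$ and~$\underline{\xi}_M$, which compare the~$V$- and~$H$-norms, into the asserted comparisons of the~$H$- and~$V'$-norms. I stress from the start that this invariance is essential and not a mere convenience: for a closed subspace not aligned with the eigenbasis the first relation would in general fail, so the eigenfunction hypothesis cannot be dropped.

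Concretely, set~$N\coloneqq\sigma(M)$ and fix an~$H$-orthonormal eigenbasis~$(e_n)_{n\in\bbN_+}$ with~$Ae_n=\alpha_ne_n$, adapted inside each eigenspace so that~$\clE_{\sigma(M)}=\linspan\{e_n\mid n\in J\}$ for a set~$J$ of~$N$ indices; then~$\clE_{\sigma(M)}^\perp$ is the~$H$-closure of~$\linspan\{e_n\mid n\notin J\}$. Writing~$h=\sum_n h_ne_n$, I recall~$\norm{h}{V}^2=\sum_n\alpha_n h_n^2$, $\norm{h}{H}^2=\sum_n h_n^2$, and~$\norm{h}{V'}^2=\sum_n\alpha_n^{-1} h_n^2$. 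Since~$\clE_{\sigma(M)}\subset V$, one has~$V\cap\clE_{\sigma(M)}=\clE_{\sigma(M)}$, and a direct inspection of the two quotients identifies the Poincar\'e-like constants with extreme eigenvalues on the respective subspaces, namely~$\overline{\xi}_M=\min_{n\notin J}\alpha_n$ and~$\underline{\xi}_M=\max_{n\in J}\alpha_n$.

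The two relations then follow by termwise comparison. For~$\varTheta=\sum_{n\notin J}\varTheta_ne_n\in\clE_{\sigma(M)}^\perp\setminus\{0\}$ (an element of~$H$, possibly not of~$V$), each~$n\notin J$ satisfies~$\alpha_n\ge\overline{\xi}_M$, hence~$\alpha_n^{-1}\varTheta_n^2\le\overline{\xi}_M^{-1}\varTheta_n^2$; summing over~$n\notin J$ yields~$\norm{\varTheta}{V'}^2\le\overline{\xi}_M^{-1}\norm{\varTheta}{H}^2$, i.e.~$\norm{\varTheta}{H}^2\ge\overline{\xi}_M\norm{\varTheta}{V'}^2$. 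For~$\theta=\sum_{n\in J}\theta_ne_n\in\clE_{\sigma(M)}\setminus\{0\}$, each~$n\in J$ satisfies~$\alpha_n\le\underline{\xi}_M$, hence~$\theta_n^2\le\underline{\xi}_M\alpha_n^{-1}\theta_n^2$; summing over~$n\in J$ gives~$\norm{\theta}{H}^2\le\underline{\xi}_M\norm{\theta}{V'}^2$, which is the second bound.

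The only genuinely delicate point, and the step I would spell out most carefully, is the first relation: there~$\varTheta$ ranges over the whole of~$\clE_{\sigma(M)}^\perp\subset H$, whereas~$\overline{\xi}_M$ is defined by a quotient over the smaller set~$V\cap\clE_{\sigma(M)}^\perp$. The eigenexpansion disposes of this at once, since the series~$\sum_{n\notin J}\alpha_n^{-1}\varTheta_n^2$ defining~$\norm{\varTheta}{V'}^2$ is automatically finite and dominated by~$\overline{\xi}_M^{-1}\norm{\varTheta}{H}^2$ for every~$\varTheta\in H$, with no a priori regularity required. Equivalently, and perhaps more transparently, the whole argument can be phrased through the operator inequalities~$A\ge\overline{\xi}_M$ on~$\clE_{\sigma(M)}^\perp$ and~$A\le\underline{\xi}_M$ on~$\clE_{\sigma(M)}$, inverted to~$A^{-1}\le\overline{\xi}_M^{-1}$ and~$A^{-1}\ge\underline{\xi}_M^{-1}$ on the corresponding subspaces, and then read off from~$\norm{h}{V'}^2=(A^{-1}h,h)_H$; I expect the inversion of the operator inequality, legitimate exactly because both subspaces reduce~$A$, to be the part most worth making explicit.
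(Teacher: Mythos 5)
Your proof is correct, and it reaches the same two inequalities by a genuinely more explicit route than the paper. The paper's own proof is a two-line change of variables: it observes the inclusions $A^{-\frac12}\clE_{\sigma(M)}^\perp\subseteq V\bigcap\clE_{\sigma(M)}^\perp$ and $A^{-\frac12}\clE_{\sigma(M)}\subseteq V\bigcap\clE_{\sigma(M)}$, and then uses the identities $\norm{\varTheta}{H}=\norm{A^{-\frac12}\varTheta}{V}$ and $\norm{\varTheta}{V'}=\norm{A^{-\frac12}\varTheta}{H}$ to read both bounds directly off the defining Rayleigh quotients of $\overline{\xi}_M$ and $\underline{\xi}_M$, with no need to evaluate those constants. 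You instead diagonalize: you choose an adapted $H$-orthonormal eigenbasis with $\clE_{\sigma(M)}=\linspan\{e_n\mid n\in J\}$, identify $\overline{\xi}_M=\min_{n\notin J}\alpha_n$ and $\underline{\xi}_M=\max_{n\in J}\alpha_n$ (correct, though the paper never needs this), and conclude by termwise comparison of the coefficient series. Both arguments rest on exactly the same structural fact — that $E_{\sigma(M)}$ consists of eigenfunctions of $A$, so that $\clE_{\sigma(M)}$ and $\clE_{\sigma(M)}^\perp$ reduce $A$ and its fractional powers — and indeed your closing reformulation via $\norm{h}{V'}^2=(A^{-1}h,h)_H$ and inverted operator inequalities is the paper's substitution in disguise. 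What each buys: the paper's version is shorter, requires no choice of adapted basis, and isolates the two inclusions as the only hypothesis actually used; your version makes explicit why the eigenfunction assumption cannot be dropped, and handles transparently the one delicate point you rightly flag — that $\varTheta$ ranges over all of $\clE_{\sigma(M)}^\perp\subset H$ while $\overline{\xi}_M$ is defined over $V\bigcap\clE_{\sigma(M)}^\perp$ — a point the paper settles silently through the fact that $A^{-\frac12}\varTheta\in V$ for every $\varTheta\in H$.
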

\begin{proof}
Note that
$A^{-\frac12}\clE_{\sigma(M)}^\perp\subseteq V\bigcap\clE_{\sigma(M)}^\perp$ and~$A^{-\frac12}\clE_{\sigma(M)}\subseteq V\bigcap\clE_{\sigma(M)}$. The definitions of~$\overline{\xi}_M$ and~$\underline{\xi}_M$ give us~$\tfrac{\norm{\varTheta}{H}^2}{\norm{\varTheta}{V'}^2}=\tfrac{\norm{A^{-\frac12}\varTheta}{V}^2}{\norm{A^{-\frac12}\varTheta}{H}^2}\ge\overline{\xi}_M$ and~$\tfrac{\norm{\theta}{H}^2}{\norm{\theta}{V'}^2}=\tfrac{\norm{A^{-\frac12}\theta}{V}^2}{\norm{A^{-\frac12}\theta}{H}^2}\le\underline{\xi}_M$.
\end{proof}

\begin{lemma}\label{L:extOP}
Let~$H=\clU\oplus \clE^{\perp}$, with~$\clU=\clU_{\sigma(M)}$ and~$\clU=\clE_{\sigma(M)}$ being finite-dimensional spaces as in Assumption~\ref{A:poincare}. Then, $V'=\clU\oplus \clE^{\perp V'}$ and the oblique projection~$P_{\clU}^{\clE^{\perp V'}}\in\clL(V')$ is an extension of~$P_{\clU}^{\clE^{\perp}}\in\clL(H)$. Furthermore,  we have
\begin{align}\notag
&P_{\clE}^{\clE^{\perp V'}}\!\!=P_{\clE}^{\clE^{\perp V'}}\!P_{\clU}^{\clE^{\perp V'}},\quad P_{\clU}^{\clE^{\perp V'}}\!\!=P_{\clU}^{\clE^{\perp V'}}\!P_{\clE}^{\clE^{\perp V'}},
\quad\mbox{and}\quad P_{\clE^{\perp V'}}^{\clU}=P_{\clE^{\perp V'}}^{\clE}+P_{\clE^{\perp V'}}^{\clU}P_{\clE}
^{\clE^{\perp V'}}\!.
\end{align}
\end{lemma}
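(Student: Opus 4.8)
My plan is to reduce every assertion to a single compatibility fact: that the $V'$-orthogonal complement of $\clE\coloneqq\clE_{\sigma(M)}$ meets the pivot space in its $H$-orthogonal complement, i.e. $\clE^{\perp V'}\cap H=\clE^{\perp}$. To prove this I would use that, as already noted in the proof of Lemma~\ref{L:xiAV'}, $\clE$ is spanned by eigenfunctions of $A$ and is therefore invariant under the (fractional) powers of $A$, so $A^{-1}\clE=\clE$. Writing $(\cdot,\cdot)_{V'}$ for the scalar product induced on $V'$ by the Riesz isometry $A\colon V\to V'$, one has $(f,g)_{V'}=\langle f,A^{-1}g\rangle_{V',V}$, whence, using the identification $\langle y,z\rangle_{V',V}=(y,z)_H$ on $H\times V$, the relation $(u,\theta)_{V'}=(u,A^{-1}\theta)_H$ holds for $u\in H$ and $\theta\in\clE$. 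Since $A^{-1}$ carries $\clE$ onto $\clE$, the condition $(u,\theta)_{V'}=0$ for all $\theta\in\clE$ is equivalent to $(u,\psi)_H=0$ for all $\psi\in\clE$; that is, $\clE^{\perp V'}\cap H=\clE^{\perp}$. This is the main (essentially the only) obstacle, because it is the one point where the $A$-invariance of $\clE$ and the compatibility of the two scalar products must be combined.

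Granting this, the direct sum $V'=\clU\oplus\clE^{\perp V'}$ follows quickly. Trivial intersection is immediate, $\clU\cap\clE^{\perp V'}=\clU\cap\clE^{\perp}=\{0\}$, since $\clU\subset H$ and $H=\clU\oplus\clE^{\perp}$ by Assumption~\ref{A:poincareDS}. For spanning I would argue by finite codimension: as $\clE\subset V'$ is $\sigma(M)$-dimensional, its $V'$-orthogonal complement $\clE^{\perp V'}$ is closed of codimension $\sigma(M)$, so the natural map $\clU\hookrightarrow V'\to V'/\clE^{\perp V'}$ is an injection between spaces of equal finite dimension $\sigma(M)$, hence an isomorphism; this yields $V'=\clU+\clE^{\perp V'}$ and therefore $V'=\clU\oplus\clE^{\perp V'}$. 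Both summands being closed, the oblique projection $P_{\clU}^{\clE^{\perp V'}}\in\clL(V')$ is bounded.

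The extension property I would obtain for free. For $h\in H$, decompose $h=h_\clU+h_{\clE^{\perp}}$ with $h_\clU\in\clU$ and $h_{\clE^{\perp}}\in\clE^{\perp}$ according to $H=\clU\oplus\clE^{\perp}$. Since $\clE^{\perp}\subseteq\clE^{\perp V'}$ by the key fact, this is also the (unique) decomposition of $h$ along $V'=\clU\oplus\clE^{\perp V'}$, and hence $P_{\clU}^{\clE^{\perp V'}}h=h_\clU=P_{\clU}^{\clE^{\perp}}h$.

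Finally, the three identities I would treat as pure projection algebra, exploiting that both $\clU$ and $\clE$ are algebraic complements of $N\coloneqq\clE^{\perp V'}$ in $V'$ (note $P_{\clE}^{N}$ is just the $V'$-orthogonal projection onto $\clE$), so that $P_{\clU}^{N}$ and $P_{\clE}^{N}$ both annihilate $N$. Writing an arbitrary $f\in V'$ in each of $V'=\clU\oplus N$ and $V'=\clE\oplus N$ and using that both projections vanish on $N$ gives $P_{\clE}^{N}=P_{\clE}^{N}P_{\clU}^{N}$ and $P_{\clU}^{N}=P_{\clU}^{N}P_{\clE}^{N}$, which are the first two claims. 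The third then follows by substituting $P_{\clE^{\perp V'}}^{\clU}=\Id-P_{\clU}^{N}$ and $P_{\clE^{\perp V'}}^{\clE}=\Id-P_{\clE}^{N}$ into its right-hand side and simplifying with $P_{\clU}^{N}P_{\clE}^{N}=P_{\clU}^{N}$.
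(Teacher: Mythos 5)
Your proof is correct, and it is genuinely more self-contained than the paper's. For the first two assertions (the direct sum $V'=\clU\oplus\clE^{\perp V'}$ and the extension property) the paper does not argue at all: it cites \cite[Lems.~3.2 and~3.3]{KunRod19-cocv}. You instead derive both from the single compatibility fact $\clE^{\perp V'}\cap H=\clE^{\perp}$, which you correctly ground in the $A$-invariance of $\clE$ (legitimate here: Assumption~\ref{A:poincare} takes the $e_{M,j}$ to be eigenfunctions of $A$, and the paper itself exploits exactly this invariance, without comment, in the proof of Lemma~\ref{L:xiAV'} via $A^{-\frac12}\clE\subseteq V\bigcap\clE$) together with the Riesz-induced identity $(u,\theta)_{V'}=(u,A^{-1}\theta)_H$ for $u\in H$, $\theta\in\clE$. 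Your finite-codimension count (injectivity of $\clU\to V'/\clE^{\perp V'}$ between $\sigma(M)$-dimensional spaces) cleanly yields the spanning, and the closedness of both summands justifies boundedness of the projection; the extension property then is indeed immediate from $\clE^{\perp}\subseteq\clE^{\perp V'}$. For the three identities — the only part the paper proves — your argument is in substance the same as the paper's: decompose an arbitrary $f\in V'$ along $V'=\clU\oplus\clE^{\perp V'}$ (resp.\ $\clE\oplus\clE^{\perp V'}$) and use that each projection annihilates its along-space; for the third identity the paper decomposes directly, writing $P_{\clE^{\perp V'}}^{\clU}f=P_{\clE^{\perp V'}}^{\clU}\bigl(P_{\clE^{\perp V'}}^{\clE}f+P_{\clE}^{\clE^{\perp V'}}f\bigr)$ and using that $P_{\clE^{\perp V'}}^{\clU}$ acts as the identity on $\clE^{\perp V'}$, whereas you substitute $P_{\clE^{\perp V'}}^{\clU}=\Id-P_{\clU}^{\clE^{\perp V'}}$, $P_{\clE^{\perp V'}}^{\clE}=\Id-P_{\clE}^{\clE^{\perp V'}}$ and simplify with the second identity — trivially equivalent. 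What your route buys is a complete in-paper proof that makes explicit the one place where the eigenfunction structure of $\clE$ and the compatibility of the $H$- and $V'$-inner products are actually used; what the paper's route buys is brevity, at the cost of outsourcing the structural facts to the earlier reference.
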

\begin{proof}
The first statements are shown in~\cite[Lems.~3.2 and 3.3]{KunRod19-cocv}. We show  the last identities (stated in~\cite[Lem.~3.4]{KunRod19-cocv} without proof).
For an arbitrary~$f\in V'$,
\begin{align}
&P_{\clE}^{\clE^{\perp V'}}f=P_{\clE}^{\clE^{\perp V'}}\left(P_{\clU}^{\clE^{\perp V'}}f+P_{\clE^{\perp V'}}^{\clU}f\right)=P_{\clE}^{\clE^{\perp V'}}P_{\clU}^{\clE^{\perp V'}}f,\notag\\
&P_{\clU}^{\clE^{\perp V'}}f=P_{\clU}^{\clE^{\perp V'}}\left(P_{\clE}^{\clE^{\perp V'}}f+P_{\clE^{\perp V'}}^{\clE}f\right)=P_{\clU}^{\clE^{\perp V'}}P_{\clE}^{\clE^{\perp V'}}f,\notag\\
&P_{\clE^{\perp V'}}^{\clU}f=P_{\clE^{\perp V'}}^{\clU}\left(
P_{\clE^{\perp V'}}^{\clE}f+P_{\clE}^{\clE^{\perp V'}}f\right)=P_{\clE^{\perp V'}}^{\clE}f+P_{\clE^{\perp V'}}^{\clU} P_{\clE}^{\clE^{\perp V'}}f,\notag
\end{align}
which finishes the proof.
\end{proof}

The next result extends the result given in~\cite{KunRod19-cocv} for~$A_{\rm rc}\in\clL(H,V')$  to the case~$A_{\rm rc}\in\clL(V,H)+\clL(H,V')$. It also considers the case where~$\clE_{\sigma(M)}$ is not necessarily spanned by the first eigenfunctions of~$A$ as in~\cite{KunRod19-cocv}, that is, we present the details for the claim in~\cite[Rem.~3.9]{KunRod19-cocv}. See also~\cite[sect.~4.8]{KunRod19-cocv} for examples where the appropriately chosen~$\clE_{\sigma(M)}$s are not spanned by the first eigenfunctions of~$A$.

Hereafter~$\ovlineC{a_1,a_2,\dots,a_k}$, $k\in\bbN_+$, stands for a constant that increases with each of its nonnegative arguments~$a_i$, $1\le i\le k$.
\begin{theorem}\label{T:stabOP}
Let Assumptions~\ref{A:A0sp}--\ref{A:poincare} hold true and let~$\mu>0$ and~$\gamma>1$. Then, there exists $M_*=\ovlineC{C_\clE,C_{\rm rc},C_P,\gamma\mu}$ such that for all~$M\ge M_*$ and all~$\lambda\ge\mu$ the weak solution~$y$ of system~\eqref{sys-y-F-abst}, with 
\begin{align}\label{FeedKunRod}
 \clF=\clF^{\rm obli}\coloneqq P_{\clU_{\sigma(M)}}^{\clE_{\sigma(M)}^{\perp V'}}
 \Bigl(A+A_{\rm rc}-\lambda\Id\Bigr),
 \end{align}
satisfies  inequality~\eqref{goal-intro} with~$\varrho=\ovlineC{C_\clE,C_{\rm rc},C_P,\underline{\xi}_M,\overline{\xi}_M^{-1}\lambda^2,\left(\max\{\gamma\mu,\lambda\}-\mu\right)^{-1}}$.
\end{theorem}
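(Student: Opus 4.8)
The plan is to recast the closed loop, split the state by a projection adapted to the coercive structure rather than to the actuators, and close a Gr\"onwall estimate in which the large Poincar\'e constant~$\overline{\xi}_M$ of Assumption~\ref{A:poincarexi} dominates all error terms. First I would rewrite~\eqref{sys-y-F-abst} with~$\clF=\clF^{\rm obli}$ from~\eqref{FeedKunRod} as
\[
\dot y + Q(A+A_{\rm rc})y+\lambda Py=0,\qquad P\coloneqq P_{\clU_{\sigma(M)}}^{\clE_{\sigma(M)}^{\perp V'}},\quad Q\coloneqq\Id-P,
\]
using Lemma~\ref{L:extOP} to read~$P$ as a bounded projection in~$V'$ with~$P^2=P$, $PQ=QP=0$. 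The naive move would be to apply~$P$ and~$Q$ directly, but this fails: the actuator space~$\clU_{\sigma(M)}$ need not be contained in~$V$ (indicator functions are not in~$H^1$), so~$Qy=y-Py$ need not lie in~$V$ and cannot be tested in an~$H$-energy estimate.

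To get around this I would decompose instead by the~$H$-\emph{orthogonal} projection~$\Pi$ onto~$\clE_{\sigma(M)}$, which equals~$P_{\clE_{\sigma(M)}}^{\clE_{\sigma(M)}^{\perp}}$ and, since~$\clE_{\sigma(M)}\subset V$ is finite-dimensional, maps~$V$ into~$V$. Setting~$p\coloneqq\Pi y\in\clE_{\sigma(M)}$ and~$q\coloneqq(\Id-\Pi)y\in V\cap\clE_{\sigma(M)}^{\perp}$, the crucial point is that the~$H$-restrictions of the identities in Lemma~\ref{L:extOP} read~$\Pi P=\Pi$ and~$P\Pi=P$. Applying~$\Pi$ and~$\Id-\Pi$ to the rewritten system and using these, the cross terms collapse and I would obtain
\[
\dot p+\lambda p=0,\qquad \dot q+Q(A+A_{\rm rc})y=\lambda Qp .
\]
The first gives~$\norm{p(t)}{H}=\ex^{-\lambda(t-s)}\norm{p(s)}{H}$ exactly (a finite-dimensional ODE), and the source~$\lambda Qp$ in the~$q$-equation decays at rate~$\lambda\ge\mu$ with~$\norm{Qp}{H}\le(1+C_P)\norm{p}{H}$.

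Next I would test the~$q$-equation with~$q\in V\cap\clE_{\sigma(M)}^{\perp}$ (now legitimate). A short computation using the symmetry of~$A$ shows~$Aq\in\clE_{\sigma(M)}^{\perp V'}$ for~$q\perp_H\clE_{\sigma(M)}$, whence~$QAq=Aq$ and the principal term is clean: $\langle QAq,q\rangle_{V',V}=\norm{q}{V}^2\ge\overline{\xi}_M\norm{q}{H}^2$. This is exactly why projecting onto~$\clE_{\sigma(M)}^{\perp}$ rather than~$\clU_{\sigma(M)}$ matters — the oblique-projection error annihilates on~$\clE_{\sigma(M)}^{\perp}$. For the reaction-convection term I would invoke the relaxed Assumption~\ref{A:A1}, splitting~$A_{\rm rc}=A_{\rm rc}^{(1)}+A_{\rm rc}^{(2)}\in\clL(H,V')+\clL(V,H)$, and bound both~$\langle A_{\rm rc}q,q\rangle$ and the projection error~$(PA_{\rm rc}q,q)_H$ by routing through~$\clE_{\sigma(M)}$ via Lemma~\ref{L:extOP} and Lemma~\ref{L:xiAV'}; the~$\clL(H,V')$ part is what produces only a coefficient of order~$\underline{\xi}_M^{1/2}\le C_\clE^{1/2}\,\overline{\xi}_M^{1/2}$ on~$\norm{q}{H}^2$, which is subcritical with respect to the coercive gain~$\overline{\xi}_M$. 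Hence there is~$M_*=\ovlineC{C_\clE,C_{\rm rc},C_P,\gamma\mu}$ so that for~$M\ge M_*$ one has~$\tfrac{\ed}{\ed t}\norm{q}{H}^2+2a_M\norm{q}{H}^2\le C_{\rm src}\norm{p}{H}^2$ with~$a_M\to+\infty$ chosen~$\ge\gamma\mu$ and~$C_{\rm src}=\ovlineC{C_\clE,C_{\rm rc},C_P,\underline{\xi}_M,\lambda^2}$.

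Finally I would combine the two components by Duhamel/Gr\"onwall: convolving the decay~$\ex^{-a_M t}$ of~$q$ with the source~$\ex^{-\lambda t}$ of~$p$ yields rate~$\min\{a_M,\lambda\}\ge\mu$, the hypothesis~$\gamma>1$ preventing resonance and contributing the factor~$(\max\{\gamma\mu,\lambda\}-\mu)^{-1}$, while balancing the~$\lambda^2$-sized source against the large rate~$a_M\sim\overline{\xi}_M$ turns it into the factor~$\overline{\xi}_M^{-1}\lambda^2$. Since~$\Pi$ is orthogonal, $\norm{y}{H}^2=\norm{p}{H}^2+\norm{q}{H}^2$ and~$\norm{p(s)}{H},\norm{q(s)}{H}\le\norm{y(s)}{H}$, so assembling the two bounds gives~\eqref{goal-intro} with the stated~$\varrho$. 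The main obstacle, and the step demanding the most care, is the interplay at the heart of the energy estimate: one must decompose by~$\Pi$ (onto~$\clE_{\sigma(M)}\subset V$) to stay in~$V$, yet reconcile this with the oblique feedback through the identities~$\Pi P=\Pi$, $P\Pi=P$, and then verify that the non-self-adjoint error~$(PA_{\rm rc}q,q)_H$ remains of order~$\overline{\xi}_M^{1/2}$ rather than~$\overline{\xi}_M$ — it is precisely this subcriticality, guaranteed by the~$\clL(V,H)$ mapping property in Assumption~\ref{A:A1} together with the ratio bound~$C_\clE$, that both forces the threshold~$M_*$ and makes the argument work beyond the eigenfunction setting of~\cite{KunRod19-cocv}.
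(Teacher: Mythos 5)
Your proposal is correct and follows essentially the same route as the paper's proof: the paper likewise decomposes~$y$ via the $H$-orthogonal projections onto~$\clE_{\sigma(M)}$ and~$\clE_{\sigma(M)}^\perp$ (its~$z$ and~$Z$ are your~$p$ and~$q$), obtains the exact law~$\dot z=-\lambda z$, tests the complementary equation with~$Z$ using the identities of Lemma~\ref{L:extOP}, the splitting~$A_{\rm rc}=A_{\rm rc1}+A_{\rm rc2}$ and Lemma~\ref{L:xiAV'} to keep the reaction-convection terms subcritical relative to the coercive gain~$\overline{\xi}_M$, and closes with the same Duhamel estimate yielding the factor~$\left(\max\{\gamma\mu,\lambda\}-\mu\right)^{-1}$. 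The only deviations are presentational: your preliminary rewriting~$\dot y+Q(A+A_{\rm rc})y+\lambda Py=0$ with~$\Pi P=\Pi$, $P\Pi=P$ reproduces exactly the paper's component equations (including the implicit use, shared by the paper, of the $A$-invariance of the eigenfunction space~$\clE_{\sigma(M)}$ to get~$Aq\in\clE_{\sigma(M)}^{\perp V'}$), and your~$O(\underline{\xi}_M^{1/2})$ bookkeeping for the projected~$\clL(H,V')$ term is slightly cruder than the paper's uniform bound~$C_\clE^{\frac12}(1+C_P)$ from~\eqref{react-est1}, but both remain subcritical and affect neither the threshold~$M_*$ nor the stated~$\varrho$.
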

\begin{proof}
Firstly, for the orthogonal component~$z\coloneqq P_{\clE_{\sigma(M)}}^{\clE_{\sigma(M)}^\perp}y=P_{\clE_{\sigma(M)}}^{\clE_{\sigma(M)}^{\perp V'}}y$, we find
\begin{equation}\label{dyn-fin}
\dot z=-\lambda z,
\end{equation}
thus~$z$ is stable. Let us fix~$M$ and denote, for simplicity, $\clU\coloneqq\clU_{\sigma(M)}$ and $\clE\coloneqq\clE_{\sigma(M)}$. The complementary component~$Z\coloneqq P_{\clE^\perp}^{\clE}y=P_{\clE^{\perp V'}}^{\clE}y$ satisfies
\begin{align}
\dot Z&=-P_{\clE^{\perp V'}}^{\clE}P_{\clE^{\perp V'}}^{\clU}\left(Ay+A_{\rm rc}y\right)-\lambda P_{\clE^{\perp V'}}^{\clE}P_{\clU}^{\clE^{\perp V'}}y\notag\\
&=-AZ-P_{\clE^{\perp V'}}^{\clU}A_{\rm rc}Z-p,\notag
\\
\mbox{with}\qquad p&\coloneqq P_{\clE^{\perp V'}}^{\clU}(Az+A_{\rm rc}z)+\lambda P_{\clE^{\perp V'}}^{\clE}P_{\clU}^{\clE^{\perp V'}}z\notag
\end{align}
and, multiplying (testing) the dynamics equation with~$2Z$ leads us to\black
\begin{align}\label{dtZ1}
\tfrac{\rmd}{\rmd t}\norm{Z}{H}^2&=-2\norm{Z}{V}^2-2\left\langle P_{\clE^{\perp V'}}^{\clU}A_{\rm rc}Z+p,Z\right\rangle_{V',V}.
\end{align}

Let us fix an arbitrary $(A_{\rm rc1},A_{\rm rc2})\in\clL(V,H)\times\clL(H,V')$ so that~$A_{\rm rc}=A_{\rm rc1}+A_{\rm rc2}$.
For the reaction-convection term, with~$(w,Z)\in V\times(V\bigcap \clE_M^\perp)$ we obtain
\begin{subequations}\label{react-est1}
\begin{align}
&-\left\langle P_{\clE^{\perp V'}}^{\clU}A_{\rm rc1}w,Z\right\rangle_{V',V}
\le\norm{P_{\clE^{\perp}}^{\clU}}{\clL(H)}\norm{A_{\rm rc1}}{\clL(V,H)}\norm{w}{V}\norm{Z}{H},\\
&-\left\langle P_{\clE^{\perp V'}}^{\clU}A_{\rm rc2}w,Z\right\rangle_{V',V}
=-\left\langle P_{\clE^{\perp V'}}^{\clE}A_{\rm rc2}w,Z\right\rangle_{V',V}-\left\langle P_{\clE^{\perp V'}}^{\clU}P_{\clE}^{\clE^{\perp V'}}A_{\rm rc2}w,Z\right\rangle_{V',V}\notag\\
&\hspace{2em}\le\norm{A_{\rm rc2}}{\clL(H,V')}\norm{w}{H}\norm{Z}{V}+\norm{P_{\clE^{\perp V'}}^{\clU}P_{\clE}^{\clE^{\perp V'}}}{\clL(V')}\norm{A_{\rm rc2}}{\clL(H,V')}\norm{w}{H}\norm{Z}{V},\\
&\norm{P_{\clE^{\perp V'}}^{\clU}P_{\clE}^{\clE^{\perp V'}}}{\clL(V')}=
\norm{P_{\clE^\perp}^{\clE}P_{\clE^{\perp}}^{\clU}P_{\clE}^{\clE^{\perp}}P_{\clE}^{\clE^{\perp V'}}}{\clL(V')}
\notag\\
&\hspace{2em}
\le\norm{P_{\clE^{\perp}}^{\clE}}{\clL(H,V')}\norm{P_{\clE^{\perp}}^{\clU}}{\clL(H)}\norm{P_{\clE}^{\clE^{\perp}}}{\clL(V',H)}\le(\overline\xi_{M})^{-\frac12}\underline{\xi}_M^{\frac12}\norm{P_{\clE^{\perp}}^{\clU}}{\clL(H)}\notag\\
&\hspace{2em}
\le C_\clE^{\frac12} (1+C_P),
\end{align}
\end{subequations}
with~$C_\clE$ and~$C_P$ as in Assumption~\ref{A:poincare}, and where we have used Lemma~\ref{L:xiAV'}. Hence, from~\eqref{react-est1} with $w=Z$ and the Young inequality it follows that
\begin{align}\notag
&-2\left\langle P_{\clE^{\perp V'}}^{\clU}A_{\rm rc}Z,Z\right\rangle_{V',V}
\le\tfrac12\norm{Z}{V}^2+\ovlineC{C_\clE, C_P}(\norm{A_{\rm rc1}}{\clL(V,H)}^2+\norm{A_{\rm rc2}}{\clL(H,V')}^2)\norm{Z}{H}^2,
\end{align}
and taking the infimum over the pair~$(A_{\rm rc1},A_{\rm rc2})$ it follows that
\begin{align}\notag
-2\left\langle P_{\clE^{\perp V'}}^{\clU}A_{\rm rc}Z,Z\right\rangle_{V',V}
&\le\tfrac12\norm{Z}{V}^2+\ovlineC{C_\clE, C_P}\norm{A_{\rm rc}}{\clL(V,H)+\clL(H,V')}^2\norm{Z}{H}^2\\
&\le\tfrac12\norm{Z}{V}^2+\ovlineC{C_\clE, C_P,C_{\rm rc}}\norm{Z}{H}^2,\label{react-est2}
\end{align}
with~$C_{\rm rc}$ as in Assumption~\ref{A:A1}.
Therefore, from~\eqref{dtZ1} and~\eqref{react-est2} we find that
\begin{align}\label{dtZ2}
\tfrac{\rmd}{\rmd t}\norm{Z}{H}^2&\le-\tfrac32\norm{Z}{V}^2+\ovlineC{C_\clE, C_P,C_{\rm rc}}\norm{Z}{H}^2-2\left\langle p,Z\right\rangle_{V',V}.
\end{align}

Next, we observe that
\begin{subequations}\label{est-pZ}
\begin{align}
&-2\left\langle p,Z\right\rangle_{V',V}=-2\left\langle P_{\clE^{\perp V'}}^{\clU}(Az+A_{\rm rc}z)+\lambda P_{\clE^{\perp V'}}^{\clE}P_{\clU}^{\clE^{\perp V'}}z,Z\right\rangle_{V',V},\\
&-2\left\langle P_{\clE^{\perp V'}}^{\clU}Az,Z\right\rangle_{V',V}=-2\left\langle P_{\clE^{\perp}}^{\clU}Az,Z\right\rangle_{V',V}\notag\\
&\hspace{4em}\le2\overline{\xi}_M^{-\frac12}(1+C_P)\underline{\xi}_M\norm{z}{H}\norm{Z}{V},\\
&-2\left\langle \lambda P_{\clE^{\perp V'}}^{\clE}P_{\clU}^{\clE^{\perp V'}}z,Z\right\rangle_{V',V}=-2\left\langle \lambda P_{\clE^{\perp}}^{\clE}P_{\clU}^{\clE^{\perp}}z,Z\right\rangle_{V',V}\notag\\
&\hspace{4em}\le2\lambda\overline{\xi}_M^{-\frac12}C_P\norm{z}{H}\norm{Z}{V}
\intertext{and, using~\eqref{react-est1} with~$w=z$,}
&-2\left\langle P_{\clE^{\perp V'}}^{\clU}A_{\rm rc}z,Z\right\rangle_{V',V}\notag\\
&\hspace{4em}\le2\ovlineC{C_P}\norm{A_{\rm rc1}}{\clL(V,H)}\norm{z}{V}\norm{Z}{H}+
\ovlineC{C_P,C_\clE}\norm{A_{\rm rc2}}{\clL(H,V')}\norm{z}{H}\norm{Z}{V}\notag\\
&\hspace{4em}\le\ovlineC{C_P,C_\clE}(\norm{A_{\rm rc1}}{\clL(V,H)}+\norm{A_{\rm rc2}}{\clL(H,V')})\norm{z}{H}\norm{Z}{V}.
\end{align}
\end{subequations}
Thus, from~\eqref{est-pZ} and the Young inequality it follows that
\begin{align}
-2\left\langle p,Z\right\rangle_{V',V}
\le\tfrac12\norm{Z}{V}^2+\ovlineC{C_P,C_\clE,\underline{\xi}_M,\lambda\overline{\xi}_M^{-\frac12}}\norm{z}{H}^2,\notag
\end{align}
which together with~\eqref{dtZ2} give us
\begin{align}
\tfrac{\rmd}{\rmd t}\norm{Z}{H}^2&\le-\norm{Z}{V}^2+\ovlineC{C_\clE, C_P,C_{\rm rc}}\norm{Z}{H}^2+\ovlineC{C_\clE, C_P,C_{\rm rc},\underline{\xi}_M,\overline{\xi}_M^{-1}\lambda^2}\norm{z}{H}^2\label{dtZ3}\\
&\le-(\overline{\xi}_M-\ovlineC{C_\clE, C_P,C_{\rm rc}})\norm{Z}{H}^2+\ovlineC{C_\clE, C_P,C_{\rm rc},\underline{\xi}_M,\overline{\xi}_M^{-1}\lambda^2}\norm{z}{H}^2.\notag
\end{align}

Now, due to Assumption~\ref{A:poincare}, we can choose $M_*\in\bbN_+$ such that
\[\tfrac12(\overline{\xi}_M-\ovlineC{C_\clE, C_P,C_{\rm rc}})\eqqcolon\mu_*>\gamma\mu,\quad\mbox{for all}\quad M\ge M_*,\]
which gives us
\begin{align}\notag
\tfrac{\rmd}{\rmd t}\norm{Z}{H}^2
&\le-2\gamma\mu\norm{Z}{H}^2+\ovlineC{C_\clE, C_P,C_{\rm rc},\underline{\xi}_M,\overline{\xi}_M^{-1}\lambda^2}\norm{z}{H}^2,\quad\mbox{for all}\quad M\ge M_*.
\end{align}
Setting also~$\lambda\ge\mu$, by Duhamel formula and recalling~\eqref{dyn-fin}, it follows that
\begin{align}
&\norm{Z(t)}{H}^2
\le\rme^{-2\gamma\mu(t-s)}\norm{Z(s)}{H}^2+\int_s^t\rme^{-2\gamma\mu(t-\tau)}\ovlineC{C_\clE, C_P,C_{\rm rc},\underline{\xi}_M,\overline{\xi}_M^{-1}\lambda^2}\norm{z(\tau)}{H}^2\,\rmd\tau\notag\\
&\hspace{2em}\le\rme^{-2\gamma\mu(t-s)}\norm{Z(s)}{H}^2+\ovlineC{C_\clE, C_P,C_{\rm rc},\underline{\xi}_M,\overline{\xi}_M^{-1}\lambda^2}\norm{z(s)}{H}^2\int_s^t\rme^{-2\gamma\mu(t-\tau)}\rme^{-2\lambda(\tau-s)}\,\rmd\tau.\notag
\end{align}
Using~\cite[Prop.~3.2]{AzmiRod20} to estimate the integral term, we find 
\begin{align}
\norm{Z(t)}{H}^2
&\le\rme^{-2\gamma\mu(t-s)}\norm{Z(s)}{H}^2+\ovlineC{C_\clE, C_P,C_{\rm rc},\underline{\xi}_M,\overline{\xi}_M^{-1}\lambda^2,\frac{1}{\max\{\gamma\mu,\lambda\}-\mu}}\norm{z(s)}{H}^2\rme^{-2\mu(t-s)},\notag
\end{align}
which allows us to conclude that
\begin{align}
\norm{y(t)}{H}^2&=\norm{z(t)}{H}^2+\norm{Z(t)}{H}^2\le \varrho\rme^{-2\mu(t-s)}\norm{y(s)}{H}^2,\quad\mbox{for all}\quad M\ge M_*,\quad\lambda\ge\mu,\notag
\end{align}
with~$\varrho=\ovlineC{C_\clE, C_P,C_{\rm rc},\underline{\xi}_M,\overline{\xi}_M^{-1}\lambda^2,\frac{1}{\max\{\gamma\mu,\lambda\}-\mu}}$.
That is, we have stability with exponential rate~$\mu>0$, for large enough~$M$ and~$\lambda$.
\end{proof}
Assumptions~\ref{A:A0sp}--\ref{A:A1} are satisfied for systems~\eqref{sys-y-parab} with the spaces and operators in~\eqref{parab.Hspaces} and~\eqref{parab.oper}, with~\eqref{assum.abf.parab}.
Examples of sequences~$(\clU_M,\clE_M)_{M\in\bbN_+}$ satisfying Assumption~\ref{A:poincaresigma}--\ref{A:poincareOP} are given in~\cite[Thms.~2.1 and~2.3]{RodSturm20} for equations evolving in one-dimensional spatial domain~$(0,L)$, namely, for a given~$r\in(0,1)$ we can take $\clU_M$ as the span of the actuators~$\indf_{\omega_{j}^M}$ whose supports are the intervals
\begin{equation}\label{loc-actOP1D}
\omega_j^M=\left(\tfrac{(2j-1)L}{2M}-\tfrac{rL}{2M},\tfrac{(2j-1)L}{2M}+\tfrac{rL}{2M}\right)\subset(0,L),\qquad 1\le j\le M,
\end{equation}
and we can take $\clE_M$ as the span of the first eigenfuntions of the Laplacian (for both Dirichlet and Neumann boundary conditions).
Examples for equations evolving in higher-dimensional rectangular domains~$\Omega\subset\bbR^d$ are given in~\cite[sect.~4.8.1]{KunRod19-cocv}, by  taking Cartesian products of those one-dimensional $\clU_M$ and~$\clE_M$, which correspond to take~$M_\sigma=M^d$ actuators (cf. Assumption~\ref{A:poincare}).
Finally, in~\cite[sect.~2.2]{Rod20-eect}, it is shown that those Cartesian products also satisfy Assumption~\ref{A:poincarexi}.
\begin{remark}
If~$A_{\rm rc}\in\clL(V,H)\subset \clL(V,H)+\clL(H,V')$ we do not need to assume the uniform bound~$C_\clE$ for~$\overline{\xi}_M^{-1}\underline{\xi}_M$ in Assumption~\ref{A:poincarexi}. Further, following the proof above we would obtain~$
\norm{y(t)}{H}^2\le\ovlineC{C_P,C_{\rm rc},\frac{1}{\max\{\gamma\mu,\lambda\}-\mu},\underline{\xi}_M,\overline{\xi}_M^{-1}\lambda^2}\rme^{-2\mu(t-s)}\norm{y(s)}{H}^2$, for all $M\ge M_*$ and $\lambda\ge\mu.$ Reaction-convection terms as~$A_{\rm rc}\in\clL(V,H)$ have been considered in~\cite{Rod20-eect} for stabilization of strong solutions of semilinear equations with initial states~$y_0\in V$. Here we consider stabilization of  weak solutions of linear systems with initial states in a larger space;~$y_0\in H\supset V$.
\end{remark}

\begin{corollary}\label{C:stabOP}
Let Assumptions~\ref{A:A0sp}--\ref{A:poincare} hold true and let~$M_*\in \bbN_*$ and~$\lambda\ge\mu$ be as in Theorem~\ref{T:stabOP}. Then for any~$\overline\mu <\mu$ we have that
\begin{align}
&\norm{\rme^{(\Bigcdot-s)\overline\mu }y}{L^2((s,+\infty),H)}^2\le\ovlineC{C_\clE, C_P,C_{\rm rc},\underline{\xi}_M,\overline{\xi}_M^{-1}\lambda^2,\frac{1}{\max\{\gamma\mu,\lambda\}-\mu},\frac1{\mu-\overline\mu }}\norm{y(s)}{H}^2,\notag\\
&\norm{\rme^{(\Bigcdot-s)\overline\mu }\clF^{\rm obli}y}{L^2((s,+\infty),H)}^2
\le\ovlineC{C_\clE, C_P,C_{\rm rc},\underline{\xi}_M,\overline{\xi}_M^{-1}\lambda^2,\frac{1}{\max\{\gamma\mu,\lambda\}-\mu},\frac1{\mu-\overline\mu },\lambda,\alpha_1^{-1},\overline\mu}\norm{y(s)}{H}^2,\notag
\end{align}
for all~$s\ge0$.
\end{corollary}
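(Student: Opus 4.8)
The plan is to obtain both bounds from the pointwise decay in Theorem~\ref{T:stabOP} combined with a weighted version of the energy inequality~\eqref{dtZ3}. For the first bound I would simply take the squared pointwise estimate $\norm{y(t)}{H}^2\le\varrho\rme^{-2\mu(t-s)}\norm{y(s)}{H}^2$ from~\eqref{goal-intro}, multiply by the weight $\rme^{2\overline\mu(t-s)}$, and integrate over $t\in(s,+\infty)$. Since $\overline\mu<\mu$, the surviving exponential $\rme^{-2(\mu-\overline\mu)(t-s)}$ is integrable with $\int_s^{+\infty}\rme^{-2(\mu-\overline\mu)(t-s)}\,\rmd t=\tfrac1{2(\mu-\overline\mu)}$, which produces exactly the extra factor $\tfrac1{\mu-\overline\mu}$ in the stated constant.

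For the second bound the key preliminary step is a weighted $V$-in-time estimate, $\int_s^{+\infty}\rme^{2\overline\mu(\tau-s)}\norm{y(\tau)}{V}^2\,\rmd\tau\le\ovlineC{\dots}\norm{y(s)}{H}^2$. To get it I would multiply~\eqref{dtZ3} by $\rme^{2\overline\mu(t-s)}$ and rewrite the left-hand side as $\tfrac{\rmd}{\rmd t}\bigl(\rme^{2\overline\mu(t-s)}\norm{Z}{H}^2\bigr)-2\overline\mu\rme^{2\overline\mu(t-s)}\norm{Z}{H}^2$, then integrate over $(s,+\infty)$. The boundary term at $+\infty$ vanishes because $\rme^{2\overline\mu(t-s)}\norm{Z(t)}{H}^2\le\varrho\rme^{-2(\mu-\overline\mu)(t-s)}\norm{y(s)}{H}^2\to0$, so transferring the good term $-\rme^{2\overline\mu(t-s)}\norm{Z}{V}^2$ to the left isolates $\int_s^{+\infty}\rme^{2\overline\mu(t-s)}\norm{Z}{V}^2$. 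The two resulting weighted zero-order integrals are then controlled: the one in $\norm{Z}{H}^2$ by the first bound (which is exactly why the whole constant of the first bound reappears here), and the one in $\norm{z}{H}^2$ by the explicit decay $\norm{z(t)}{H}=\rme^{-\lambda(t-s)}\norm{z(s)}{H}$ coming from~\eqref{dyn-fin}; the $2\overline\mu$ introduced above is the source of the $\overline\mu$ dependence. Adding the $z$-contribution through $\norm{z}{V}^2\le\underline{\xi}_M\norm{z}{H}^2$ gives the weighted $V$-estimate for $y=z+Z$.

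It remains to bound the feedback pointwise by $\norm{y}{V}$. Writing $\clF^{\rm obli}y=P_{\clU_{\sigma(M)}}^{\clE_{\sigma(M)}^{\perp V'}}(A+A_{\rm rc}-\lambda\Id)y$, the term $\lambda\Id$ and the $\clL(V,H)$-part of $A_{\rm rc}$ send $y$ into $H$, so their contribution is estimated directly by $\norm{P_{\clU}^{\clE^\perp}}{\clL(H)}=C_P$. The delicate terms are $Ay$ and the $\clL(H,V')$-part of $A_{\rm rc}y$, which lie only in $V'$: a naive $\clL(V')$-bound would force a norm equivalence between $\norm{\Bigcdot}{H}$ and $\norm{\Bigcdot}{V'}$ on the rough actuator space $\clU$, which is \emph{not} uniformly available. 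I would instead factor $P_{\clU}^{\clE^{\perp V'}}=P_{\clU}^{\clE^{\perp V'}}P_{\clE}^{\clE^{\perp V'}}$ via Lemma~\ref{L:extOP}, use that $P_{\clE}^{\clE^{\perp V'}}$ is the $V'$-orthogonal projection onto $\clE$ (so it has norm $1$ in $\clL(V')$ and sends the $V'$-valued pieces first into the \emph{smooth} finite-dimensional space $\clE$), convert there from $\norm{\Bigcdot}{V'}$ to $\norm{\Bigcdot}{H}$ by Lemma~\ref{L:xiAV'} (factor $\underline{\xi}_M^{1/2}$), and only then apply the $H$-projection $P_{\clU}^{\clE^\perp}$. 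Using $\norm{Ay}{V'}=\norm{y}{V}$ (the isometry $A\colon V\to V'$) and $\norm{A_{\rm rc}}{\clL(V,H)+\clL(H,V')}\le C_{\rm rc}$, this yields $\norm{\clF^{\rm obli}y}{H}\le\ovlineC{C_P,C_{\rm rc},\underline{\xi}_M}(\norm{y}{V}+\lambda\norm{y}{H})$; replacing $\norm{y}{H}\le\alpha_1^{-1/2}\norm{y}{V}$ (from $\alpha_1\norm{\Bigcdot}{H}^2\le\norm{\Bigcdot}{V}^2$) gives $\norm{\clF^{\rm obli}y}{H}\le\widetilde C\norm{y}{V}$ with $\widetilde C=\ovlineC{C_P,C_{\rm rc},\underline{\xi}_M,\lambda,\alpha_1^{-1}}$, which is the source of the extra $\lambda$ and $\alpha_1^{-1}$ in the final constant.

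Squaring this pointwise bound and integrating against $\rme^{2\overline\mu(t-s)}$ reduces the second claim to the weighted $V$-estimate of the second paragraph, finishing the proof. I expect the main obstacle to be precisely this last pointwise bound: extracting the \emph{strong} $H$-norm of a control that lives in the rough actuator space $\clU$ out of merely $V'$-valued quantities, which is what forces the detour through $\clE$; by contrast, the weighted energy estimate is a routine modification of~\eqref{dtZ3}.
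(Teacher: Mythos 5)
Your proposal is correct and follows essentially the same route as the paper's proof: the first bound by integrating the pointwise decay of Theorem~\ref{T:stabOP} against the weight $\rme^{2\overline\mu (\Bigcdot-s)}$, the pointwise feedback bound $\norm{\clF^{\rm obli}y}{H}\le\ovlineC{C_P,C_{\rm rc},\underline{\xi}_M,\lambda,\alpha_1^{-1}}\norm{y}{V}$ via the factorization of Lemma~\ref{L:extOP} together with the norm conversion of Lemma~\ref{L:xiAV'}, and the weighted $V$-in-time estimate for $Z$ obtained by multiplying~\eqref{dtZ3} by the exponential weight. The only deviations are cosmetic and yield the same constants: you integrate over $(s,+\infty)$ with a vanishing boundary term where the paper integrates over $(s,s+T)$ for arbitrary $T$, you handle $Ay$ directly in $V'$ via the isometry rather than through $Az$ with $z=P_{\clE_{\sigma(M)}}^{\clE_{\sigma(M)}^{\perp}}y$, and you use the exact decay $\norm{z(t)}{H}=\rme^{-\lambda(t-s)}\norm{z(s)}{H}$ from~\eqref{dyn-fin} where the paper bounds both components through~\eqref{expy-bound}.
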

\begin{proof}
For the state, with~$\bbR_{+s}\coloneqq(s,+\infty)$ for~$s\ge0$, we find that
\begin{align}
\norm{\rme^{(t-s)\overline\mu }y}{L^2(\bbR_{+s},H)}^2&\le\varrho\norm{y(s)}{H}^2\norm{\rme^{-(\mu-\overline\mu )(t-s)}}{L^2(\bbR_{+s},\bbR)}^2\le\varrho\tfrac1{2(\mu-\overline\mu )}\norm{y(s)}{H}^2\label{expy-bound}
\end{align}
and, for the feedback control,  denoting again $\clU\coloneqq\clU_{\sigma(M)}$ and $\clE\coloneqq\clE_{\sigma(M)}$, we find
\begin{subequations}\label{normFeed1}
\begin{align}
& \norm{P_{\clU}^{\clE^{\perp V'}}
 \Bigl(A+A_{\rm rc}-\lambda\Id\Bigr)y}{H}\le  \norm{\Id\rest{\clU}}{\clL(H,V')}\norm{P_{\clU}^{\clE^{\perp}}}{\clL(H)}
\norm{P_{\clE}^{\clE^{\perp V'}}
 \Bigl(A+A_{\rm rc}-\lambda\Id\Bigr)y}{H}\notag\\
&\hspace{4em}\le  \alpha_1^{-\frac12}C_P
\norm{P_{\clE}^{\clE^{\perp V'}}
 \Bigl(A+A_{\rm rc}-\lambda\Id\Bigr)y}{H}
\intertext{and, with~$z=P_{\clE}^{\clE^{\perp}}y$,}
&\norm{P_{\clE}^{\clE^{\perp V'}}
 \Bigl(A-\lambda\Id\Bigr)y}{H}\le(\underline{\xi}_M+\lambda)\norm{z}{H}\le (\underline{\xi}_M+\lambda)\alpha_1^{-\frac12}\norm{z}{V}
\intertext{and, for an arbitrary $(A_{\rm rc1},A_{\rm rc2})\in\clL(V,H)\times\clL(H,V')$ so that~$A_{\rm rc}=A_{\rm rc1}+A_{\rm rc2}$,}
&\norm{P_{\clE}^{\clE^{\perp V'}}
 A_{\rm rc}y}{H}\le \norm{A_{\rm rc1}y}{H}+\underline{\xi}_M^\frac12\norm{A_{\rm rc2}y}{V'}
\le\norm{A_{\rm rc1}}{\clL(V,H)}\norm{y}{V}+\underline{\xi}_M^\frac12\norm{A_{\rm rc2}}{\clL(H,V')}\norm{y}{H}\notag\\
&\hspace*{4em}\le \bigl(\,\norm{A_{\rm rc1}}{\clL(V,H)}+\underline{\xi}_M^\frac12\norm{A_{\rm rc2}}{\clL(H,V')}\norm{\Id\rest{V}}{\clL(V,H)}\,\bigr)\norm{y}{V}\notag\\
&\hspace*{4em}\le \bigl(1+\underline{\xi}_M^\frac12\alpha_1^{-\frac12}\,\bigr)2^\frac12\norm{(A_{\rm rc1},A_{\rm rc2})}{\clL(V,H)\times\clL(H,V')}\norm{y}{V},
\intertext{which leads us to}
&\norm{P_{\clE}^{\clE^{\perp V'}}
 A_{\rm rc}y}{H}\le (1+\underline{\xi}_M^\frac12\alpha_1^{-\frac12})2^\frac12\norm{A_{\rm rc}}{\clL(V,H)+\clL(H,V')}\norm{y}{V}.
 \end{align}
\end{subequations}
Therefore, from~\eqref{normFeed1}, it follows that
\begin{align}\notag
&\norm{\clF^{\rm obli}y}{H}\le D_0\norm{y}{V}\quad\mbox{with}\quad D_0=\ovlineC{C_P,C_{\rm rc},\lambda,\underline{\xi}_M,\alpha_1^{-1}},
\end{align}
and, denoting
\begin{equation}
 Z=P_{\clE^\perp}^{\clE}y=y-z,\quad\mbox{and}\quad \varphi(t)\coloneqq(t-s)\overline\mu ,\label{varphi-exp}
\end{equation}
we obtain
\begin{align}
\norm{\rme^{\varphi}\clF^{\rm obli}y}{L^2(\bbR_{+s},H)}^2&\le D_0^2\left(\norm{\rme^{\varphi}z}{L^2(\bbR_{+s},V)}^2+\norm{\rme^{\varphi}Z}{L^2(\bbR_{+s},V)}^2\right)\notag\\
&\le D_0^2\left(\underline\xi_M\norm{\rme^{\varphi}z}{L^2(\bbR_{+s},H)}^2+\norm{\rme^{\varphi}Z}{L^2(\bbR_{+s},V)}^2\right)\label{Fobl-norm1}
\end{align}
with~$\underline\xi_M$ as in Assumption~\ref{A:poincarexi}.
Next, we observe that
\begin{align}\notag
&\tfrac{\rmd}{\rmd t}\norm{\rme^{\varphi}Z}{H}^2
=2\overline\mu \rme^{2\varphi}\norm{Z}{H}^2+\rme^{2\varphi}\tfrac{\rmd}{\rmd t}\norm{Z}{H}^2
\end{align}
and, by~\eqref{dtZ3}, we find
\begin{align}
&\rme^{2\varphi}\tfrac{\rmd}{\rmd t}\norm{Z}{H}^2\le-\rme^{2\varphi}\norm{Z}{V}^2+D_1\rme^{2\varphi}\norm{Z}{H}^2+D_2\rme^{2\varphi}\norm{z}{H}^2\notag
\intertext{with}
&D_1=\ovlineC{C_\clE, C_P,C_{\rm rc}}\quad\mbox{and}\quad D_2=\ovlineC{C_\clE, C_P,C_{\rm rc},\underline{\xi}_M,\overline{\xi}_M^{-1}\lambda^2}.\notag
\end{align}
Thus, 
\begin{align}\notag
&\tfrac{\rmd}{\rmd t}\norm{\rme^{\varphi}Z}{H}^2+\rme^{2\varphi}\norm{Z}{V}^2
\le2\overline\mu \rme^{2\varphi}\norm{Z}{H}^2+D_1\rme^{2\varphi}\norm{Z}{H}^2+D_2\rme^{2\varphi}\norm{z}{H}^2
\end{align}
and, time integration over~$I_T\coloneqq (s,s+T)$, for arbitrary~$T>0$, gives us
\begin{align}
&-\norm{Z(s)}{H}^2+\norm{\rme^{\varphi}Z}{L^2(I_T,V)}^2\notag\\
&\hspace{3em}\le 2\overline\mu \norm{\rme^{\varphi}Z}{L^2(I_T,H)}^2+D_1\norm{\rme^{\varphi}Z}{L^2(I_T,H)}^2+D_2\norm{\rme^{\varphi}z}{L^2(I_T,H)}^2\notag\\
&\hspace{3em}\le 2\overline\mu \norm{\rme^{\varphi}Z}{L^2(\bbR_{+s},H)}^2+D_1\norm{\rme^{\varphi}Z}{L^2(\bbR_{+s},H)}^2+D_2\norm{\rme^{\varphi}z}{L^2(\bbR_{+s},H)}^2.\notag
\end{align}
Hence, since~$T$ is arbitrary,
\begin{align}
&\norm{\rme^{\varphi}Z}{L^2(\bbR_{+s},V)}^2\le (2\overline\mu +D_1)\norm{\rme^{\varphi}Z}{L^2(\bbR_{+s},H)}^2+D_2\norm{\rme^{\varphi}z}{L^2(\bbR_{+s},H)}^2+\norm{Z(s)}{H}^2\notag
\end{align}
and, by recalling~\eqref{Fobl-norm1}, we find
\begin{align}
\norm{\rme^{\varphi}\clF^{\rm obli}y}{L^2(\bbR_{+s},H)}^2&\le D_0^2(\underline{\xi}_M+D_2)\norm{\rme^{\varphi}z}{L^2(\bbR_{+s},H)}^2\notag\\
&\quad +D_0^2(2\overline\mu +D_1)\norm{\rme^{\varphi}Z}{L^2(\bbR_{+s},H)}^2+D_0^2\norm{Z(s)}{H}^2.\notag
\end{align}

Therefore,  recalling now~\eqref{expy-bound},
\begin{align}
\norm{\rme^{\varphi}\clF^{\rm obli}y}{L^2(\bbR_{+s},H)}^2&\le D_0^2(\underline{\xi}_M+D_2)\varrho\tfrac1{2(\mu-\overline\mu )}\norm{y(s)}{H}^2\notag\\
&\quad +D_0^2(2\overline\mu +D_1)\varrho\tfrac1{2(\mu-\overline\mu )}\norm{y(s)}{H}^2+D_0^2\norm{Z(s)}{H}^2,\notag
\end{align}
and, by setting
\[
D_3\coloneqq D_0^2\max\{(\underline{\xi}_M+D_2)\varrho\tfrac1{2(\mu-\overline\mu )},1+(2\overline\mu +D_1)\varrho\tfrac1{2(\mu-\overline\mu )}\}
\]
and recalling~\eqref{varphi-exp}, we arrive at
\begin{align}
\norm{\rme^{(\Bigcdot-s)\overline\mu }\clF^{\rm obli}y}{L^2(\bbR_{+s},H)}^2&\le D_3\norm{y(s)}{H}^2,\notag
\end{align}
which ends the proof.
\end{proof}

\begin{corollary}\label{C:stabOP-coo}
Let Assumptions~\ref{A:A0sp}--\ref{A:poincare} hold true and let~$M_*\in \bbN_*$ and~$\lambda\ge\mu$ be as in Theorem~\ref{T:stabOP}. Then the stabilizing input control~$u=B^{-1}\clF^{\rm obli}y$ is bounded:
\[
\norm{\rme^{(\Bigcdot-s)\overline\mu }u}{L^2((s,+\infty),\bbR^{M_\sigma})}^2\le D\norm{B^{-1}}{\clL(H,\bbR^{M_\sigma})}^2\norm{y(s)}{H}^2,
\]
for all~$s\ge0$, with a constant~$D=\ovlineC{C_\clE, C_P,C_{\rm rc},\underline{\xi}_M,\overline{\xi}_M^{-1}\lambda^2,\frac{1}{\max\{\gamma\mu,\lambda\}-\mu},\frac1{\mu-\overline\mu },\lambda,\alpha_1^{-1},\overline\mu}.$
\end{corollary}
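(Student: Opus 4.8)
The plan is to reduce the entire statement to the second estimate already established in Corollary~\ref{C:stabOP}, using only the elementary boundedness of the inverse control operator~$B^{-1}$. No new differential inequality or energy argument is needed; the analytic work has all been done upstream.

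First I would recall that $B\in\clL(\bbR^{M_\sigma},\clU_{\sigma(M)})$ is an isomorphism onto the finite-dimensional space~$\clU_{\sigma(M)}=\linspan U_{\sigma(M)}$ spanned by the actuators, so that $B^{-1}$ is a bounded linear map from~$\clU_{\sigma(M)}$ into~$\bbR^{M_\sigma}$. Since, by construction of the feedback, $\clF^{\rm obli}y(t)\in\clU_{\sigma(M)}\subset H$ for almost every~$t$, we may regard~$B^{-1}$ as an element of~$\clL(H,\bbR^{M_\sigma})$ (for instance by precomposing it with the orthogonal projection onto~$\clU_{\sigma(M)}$, which does not alter its action on~$\clU_{\sigma(M)}$). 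This yields the pointwise bound
\[
\norm{u(t)}{\bbR^{M_\sigma}}=\norm{B^{-1}\clF^{\rm obli}y(t)}{\bbR^{M_\sigma}}\le\norm{B^{-1}}{\clL(H,\bbR^{M_\sigma})}\norm{\clF^{\rm obli}y(t)}{H},\qquad\mbox{for a.e.}\quad t\ge s.
\]

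Next I would multiply this inequality by the weight~$\rme^{(t-s)\overline\mu}$, square, and integrate over~$(s,+\infty)$, which gives
\[
\norm{\rme^{(\Bigcdot-s)\overline\mu}u}{L^2((s,+\infty),\bbR^{M_\sigma})}^2\le\norm{B^{-1}}{\clL(H,\bbR^{M_\sigma})}^2\,\norm{\rme^{(\Bigcdot-s)\overline\mu}\clF^{\rm obli}y}{L^2((s,+\infty),H)}^2.
\]
Finally I would invoke the second inequality of Corollary~\ref{C:stabOP} to bound the weighted~$L^2$-norm of~$\clF^{\rm obli}y$ by $D\,\norm{y(s)}{H}^2$, where $D$ is exactly the constant $\ovlineC{C_\clE, C_P,C_{\rm rc},\underline{\xi}_M,\overline{\xi}_M^{-1}\lambda^2,\frac{1}{\max\{\gamma\mu,\lambda\}-\mu},\frac1{\mu-\overline\mu },\lambda,\alpha_1^{-1},\overline\mu}$ appearing there. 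Chaining the two displays for all~$s\ge0$ then produces the claimed estimate.

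There is essentially no analytic obstacle in this argument: the only point deserving a line of care is the identification of~$B^{-1}$ with an operator in~$\clL(H,\bbR^{M_\sigma})$, which is legitimate precisely because~$\clF^{\rm obli}$ takes values in the finite-dimensional subspace~$\clU_{\sigma(M)}$ on which~$B$ is invertible. Everything else is a direct transfer of the bound of Corollary~\ref{C:stabOP} through the bounded map~$B^{-1}$.
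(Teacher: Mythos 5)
Your proposal is correct and follows exactly the route the paper intends: the paper's own proof of this corollary is the one-line remark ``Straightforward, from Corollary~\ref{C:stabOP}'', and your argument is precisely the fleshed-out version of that reduction, passing the pointwise bound $\norm{u(t)}{\bbR^{M_\sigma}}\le\norm{B^{-1}}{\clL(H,\bbR^{M_\sigma})}\norm{\clF^{\rm obli}y(t)}{H}$ through the weighted $L^2$-norm and invoking the second estimate of Corollary~\ref{C:stabOP}. Your extra care in identifying $B^{-1}$ as an element of $\clL(H,\bbR^{M_\sigma})$ via the range of $\clF^{\rm obli}$ lying in $\clU_{\sigma(M)}$ is a legitimate and welcome clarification, not a deviation.
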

\begin{proof}
Straightforward, from Corollary~\ref{C:stabOP}.
\end{proof}

\subsection{Optimal control and the classical Riccati feedback}\label{sS:optim-ricc}
 From Theorem~\ref{T:stabOP} and its Corollaries~\ref{C:stabOP} and~\ref{C:stabOP-coo} it follows that we can find a set~$U_{M_0}$, with~$M_0=\sigma(M)\in\bbN_+$, of actuators and a control input such that, for an arbitrary initial state~$y_0\in H$, the spent energy
\begin{equation}\label{LQcost-mu}
\clJ^{\overline\mu ,\beta}(y_0;y,u)\coloneqq \tfrac12\norm{\rme^{\overline\mu\Bigcdot }y}{L^2(\bbR_+,H)}^2+ \tfrac12\beta\norm{\rme^{\overline\mu\Bigcdot }u}{L^2(\bbR_+,\bbR^{M_0})}^2
\end{equation}
is bounded and satisfies
\begin{equation}\label{ol-fcc}
 \clJ^{\overline\mu ,\beta}(y_0;y,u)<C\norm{y_0}{H}^2,
\end{equation}
where~$\beta>0$, $\overline\mu >0$ and the feedback control input~$u(t)$ is given by
\begin{equation}\label{u-obli}
u(t)=\clK^{\rm obli}(t)y(t)\coloneqq B^{-1}\clF^{\rm obli}(t)y(t),
\end{equation}
and the control operator~$B\in\clL(\bbR^{M_0},\clU_M)\subset\clL(\bbR^{M_0},H)$ is the isomorphism as in~\eqref{B_Phi}.
In applications, it is (or, may be) important to minimize the spent energy. In such case 
we look for the pair~$(\widetilde y,\widetilde u)$ minimizing the cost functional~$\clJ^{\overline\mu ,\beta}$,
\begin{align}
 &\clJ^{\overline\mu ,\beta}(y_0;\widetilde y,\widetilde u)=\min\left\{\clJ^{\overline\mu ,\beta}(y_0;y,u)\mid (y,u)\in L^2(\bbR_+,H)\times L^2(\bbR_+,\bbR^{M_0})\right\}\notag
\intertext{subject to the constraints}
 &\dot y +Ay+A_{\rm rc}y -Bu=0,\qquad y(0)-y_0=0.\label{dyn-yu-barmu0}
\end{align}

Note that if we define~$y_{\overline\mu}(t)\coloneqq\rme^{\overline\mu  t}y(t)$ and~$u_{\overline\mu}(t)\coloneqq\rme^{\overline\mu  t}u(t)$, then  $(y,u)$ solves~\eqref{dyn-yu-barmu0}
 if, and only if, $(y_{\overline\mu},u_{\overline\mu})$ solves
 \[
 \dot y_{\overline\mu} +Ay_{\overline\mu}+(A_{\rm rc}-{\overline\mu}\Id)y -Bu_{\overline\mu}=0,\qquad y_{\overline\mu}(0)-y_0=0.
 \]
 
 Hence the optimal control problem above is equivalent to look for~$(\widehat y,\widehat u)$ solving
\begin{subequations}\label{optim-pair}
\begin{align}
 &\clJ^{\beta}(y_0;\widehat y,\widehat  u)=\min\left\{\clJ^{\beta}(y_0;y,u)\mid (y,u)\in L^2(\bbR_+,H)\times L^2(\bbR_+,\bbR^{M_0})\right\}
\intertext{subject to the constraints}
 &\dot y +Ay+(A_{\rm rc}-{\overline\mu}\Id)y -Bu=0,\qquad y(0)-y_0=0,\label{optim-pair-dyn}
 \intertext{where}
 &\clJ^{\beta}(y_0;y,u)\coloneqq \tfrac12\norm{y}{L^2(\bbR_+,H)}^2+ \tfrac12\beta\norm{u}{L^2(\bbR_+,\bbR^{M_0})}^2.\label{LQcost}
\end{align}
\end{subequations} 
 Indeed, we will have~$(\widehat y(t),\widehat u(t))=(\rme^{\overline\mu t}\widetilde y(t),\rme^{\overline\mu t}\widetilde u(t))$.

We show now that, for solutions of~\eqref{optim-pair-dyn},  the boundedness of~$\clJ^\beta(y_0;y,u)$ follows from that of
$\tfrac12\norm{P_{\clE^\rmf_{M_{1}}}y}{L^2(\bbR_+,L^2)}^2+\tfrac12\beta\norm{u}{L^2(\bbR_+,\bbR^{M_0})}^2,
$
for~$M_{1}$ large enough and where~$P_{\clE^\rmf_{M_{1}}}$ is the orthogonal projection onto the linear span 
\[
\clE^\rmf_{M_{1}}\coloneqq\linspan\{e_i\mid 1\le i\le M_1\}
\]
of the first eigenfunctions of the diffusion-like operator~$A$; see~\eqref{eigfeigv}.
 
 \begin{theorem}\label{T:detectM1}
Let a solution~$(y,u)$ of system~\eqref{optim-pair-dyn}, with~$B\in\clL(\bbR^{M_0},H)$ satisfy
\begin{equation}\label{cost-M1}
\clJ^\beta_{M_1}(y_0;y,u)\coloneqq\tfrac12\norm{P_{\clE^\rmf_{M_1}}y}{L^2(\bbR_+,H)}^2+\tfrac12\beta\norm{u}{L^2(\bbR_+,\bbR^{M_0})}^2\le C_J\norm{y_0}{H}^2
\end{equation}
for a constant~$C_J>0$ independent of~$y_0$. If~$M_1$ is large enough we also have an estimate~$\clJ^\beta(y_0;y,u)=\tfrac12\norm{y}{L^2(\bbR_+,L^2)}^2+\tfrac12\beta\norm{u}{L^2(\bbR_+,\bbR^{M_0})}^2\le\widehat C_J\norm{y_0}{H}^2$ for a suitable constant~$\widehat C_J>0$ independent of~$y_0$.
\end{theorem}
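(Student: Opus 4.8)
The plan is to split $y$ according to the spectral decomposition of $A$ and to exploit the strong dissipativity of $A$ on high frequencies. Write $P\coloneqq P_{\clE^\rmf_{M_1}}$ and $Q\coloneqq\Id-P$ for the orthogonal projections onto $\clE^\rmf_{M_1}=\linspan\{e_i\mid 1\le i\le M_1\}$ and onto its complement. Since $P$ and $Q$ are spectral projections of $A$, they commute with $A$ and extend compatibly to $V$ and $V'$, and the ordering~\eqref{eigfeigv} yields the two Poincar\'e-type bounds
\[
\norm{Qh}{V}^2\ge\alpha_{M_1+1}\norm{Qh}{H}^2\quad\mbox{and}\quad\norm{Ph}{V}^2\le\alpha_{M_1}\norm{Ph}{H}^2,\qquad h\in V.
\]
Because the hypothesis $\clJ^\beta_{M_1}(y_0;y,u)\le C_J\norm{y_0}{H}^2$ already controls both $\norm{Py}{L^2(\bbR_+,H)}^2$ and $\beta\norm{u}{L^2(\bbR_+,\bbR^{M_0})}^2$ by $2C_J\norm{y_0}{H}^2$, and since $\norm{y}{L^2(\bbR_+,H)}^2=\norm{Py}{L^2(\bbR_+,H)}^2+\norm{Qy}{L^2(\bbR_+,H)}^2$, it suffices to bound $\norm{Qy}{L^2(\bbR_+,H)}^2$ by a (possibly $M_1$-dependent) multiple of $\norm{y_0}{H}^2$.

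Setting $w\coloneqq Qy$ and applying $Q$ to~\eqref{optim-pair-dyn}, the commutation $QAy=Aw$ gives $\dot w+Aw=-QA_{\rm rc}y+\overline\mu\,w+QBu$ with $w(0)=Qy_0$. Testing with $2w$, using $\langle Aw,w\rangle_{V',V}=\norm{w}{V}^2$ and the self-adjointness of $Q$ across the Gelfand triple (so that $\langle QA_{\rm rc}y,w\rangle_{V',V}=\langle A_{\rm rc}y,w\rangle_{V',V}$, as $Qw=w$, and $\langle QBu,w\rangle_{V',V}=(Bu,w)_H$), I would obtain
\[
\tfrac{\rmd}{\rmd t}\norm{w}{H}^2+2\norm{w}{V}^2=-2\langle A_{\rm rc}y,w\rangle_{V',V}+2\overline\mu\norm{w}{H}^2+2(Bu,w)_H.
\]
Writing $y=Py+w$ and fixing any decomposition $A_{\rm rc}=A_{\rm rc1}+A_{\rm rc2}$ with $(A_{\rm rc1},A_{\rm rc2})\in\clL(V,H)\times\clL(H,V')$, I would estimate the reaction--convection term in two groups. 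The diagonal part $-2\langle A_{\rm rc}w,w\rangle_{V',V}$ is bounded exactly as in~\eqref{react-est2} by $\tfrac12\norm{w}{V}^2+\ovlineC{C_{\rm rc}}\norm{w}{H}^2$, the crucial feature being that $\ovlineC{C_{\rm rc}}$ is \emph{independent of $M_1$}. The cross part $-2\langle A_{\rm rc}(Py),w\rangle_{V',V}$, together with $2(Bu,w)_H$, is handled by Young's inequality, absorbing a further $\tfrac12\norm{w}{V}^2$ and paying an $M_1$-dependent constant $C_2(M_1)$ (through $\norm{Py}{V}\le\alpha_{M_1}^{1/2}\norm{Py}{H}$) times $\norm{Py}{H}^2$, a constant $C_3$ times $\norm{B}{\clL(\bbR^{M_0},H)}^2|u|^2$, and an arbitrarily small multiple $\delta$ of $\norm{w}{H}^2$.

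Collecting these bounds, moving $\norm{w}{V}^2$ to the left, and applying the high-mode coercivity $\norm{w}{V}^2\ge\alpha_{M_1+1}\norm{w}{H}^2$, I arrive at
\[
\tfrac{\rmd}{\rmd t}\norm{w}{H}^2+\bigl(\alpha_{M_1+1}-\ovlineC{C_{\rm rc}}-2\overline\mu-\delta\bigr)\norm{w}{H}^2\le C_2(M_1)\norm{Py}{H}^2+C_3|u|^2.
\]
Since $\alpha_{M_1+1}\to+\infty$ by~\eqref{eigfeigv}, choosing $M_1$ large enough makes the coefficient on the left at least $1$. Integrating over $(0,T)$, dropping the nonnegative $\norm{w(T)}{H}^2$, using $\norm{w(0)}{H}^2=\norm{Qy_0}{H}^2\le\norm{y_0}{H}^2$, and letting $T\to+\infty$ gives
\[
\norm{w}{L^2(\bbR_+,H)}^2\le\norm{y_0}{H}^2+C_2(M_1)\norm{Py}{L^2(\bbR_+,H)}^2+C_3\norm{u}{L^2(\bbR_+,\bbR^{M_0})}^2.
\]
Bounding the last two terms by $\norm{y_0}{H}^2$ via the hypothesis on $\clJ^\beta_{M_1}$ yields $\norm{Qy}{L^2(\bbR_+,H)}^2\le\ovlineC{\Bigcdot}\norm{y_0}{H}^2$, hence a bound on $\norm{y}{L^2(\bbR_+,H)}^2$ and, together with $\beta\norm{u}{L^2}^2\le2C_J\norm{y_0}{H}^2$, on $\clJ^\beta(y_0;y,u)$.

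The heart of the argument, and its main obstacle, is the clean separation of the two kinds of error terms: the terms quadratic in $w$ must be absorbed using \emph{only} the growing dissipation $\alpha_{M_1+1}$, so their constants must be independent of $M_1$ — which is precisely why Assumption~\ref{A:A1} with its uniform bound $C_{\rm rc}$ is invoked and why the diagonal estimate~\eqref{react-est2} can be reused verbatim — whereas the cross terms coupling $w$ to $Py$ may legitimately carry $M_1$-dependent factors such as $\alpha_{M_1}^{1/2}$, because $\norm{Py}{L^2(\bbR_+,H)}^2$ and $\norm{u}{L^2}^2$ are already controlled once $M_1$ is fixed. A secondary technical point is to justify the energy identity $\tfrac{\rmd}{\rmd t}\norm{w}{H}^2=2\langle\dot w,w\rangle_{V',V}$, which requires $w\in L^2_{\rm loc}(\bbR_+,V)$ with $\dot w\in L^2_{\rm loc}(\bbR_+,V')$; this follows from the standard parabolic regularity of finite-energy solutions of~\eqref{optim-pair-dyn}.
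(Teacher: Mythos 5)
Your proposal is correct and follows essentially the same route as the paper's proof: the same orthogonal splitting $y=P_{\clE^\rmf_{M_1}}y+Q y$, the same energy estimate on the high-mode component tested with $2w$, the same crucial separation between $M_1$-independent constants (via $C_{\rm rc}$ from Assumption~\ref{A:A1}) on the quadratic-in-$w$ terms absorbed by $\alpha_{M_1+1}\to+\infty$, and $M_1$-dependent constants on the cross terms, which are harmless because the hypothesis already controls $\norm{P_{\clE^\rmf_{M_1}}y}{L^2(\bbR_+,H)}$ and $\norm{u}{L^2(\bbR_+,\bbR^{M_0})}$. The only cosmetic difference is that the paper lumps $Bu-(A_{\rm rc}-\overline\mu\Id)P_{\clE^\rmf_{M_1}}y$ into a single forcing term $F$ bounded in $V'$, while you estimate the cross term and the control term separately.
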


\begin{proof}
With~$q\coloneqq P_{\clE^\rmf_{M_1}}y$ and ~$Q\coloneqq y- P_{\clE^\rmf_{M_1}}y$, we find
\begin{align}
 &\dot Q +A Q+(\Id-P_{\clE^\rmf_{M_1}})(A_{\rm rc}-{\overline\mu}\Id)  Q 
 =(\Id-P_{\clE^\rmf_{M_1}})F \notag
\end{align}
with~$F\coloneqq Bu-(A_{\rm rc}-{\overline\mu}\Id)  q$ and
\begin{align}
\tfrac{\rmd}{\rmd t}\norm{Q}{H}^2 +2\norm{Q}{V}^2 &=-2\langle (\Id-P_{\clE^\rmf_{M_1}})A_{\rm rc}  Q,Q\rangle_{V',V} +2\overline\mu\norm{Q}{H}^2+2\langle (\Id-P_{\clE^\rmf_{M_1}})F,Q\rangle_{V',V} \notag\\
&=-2\langle A_{\rm rc}  Q,Q\rangle_{V',V}+2\overline\mu\norm{Q}{H}^2 +2\langle F,Q\rangle_{V',V},\notag
\end{align}
and using~\cite[Lem.~3.1]{Rod21-sicon}  and the Young inequality
\begin{align}
 \tfrac{\rmd}{\rmd t}\norm{Q}{H}^2 +2\norm{Q}{V}^2  &\le \tfrac12\norm{Q}{V}^2+8C_{\rm rc}\norm{Q}{H}^2 +2\overline\mu\norm{Q}{H}^2+2\norm{F}{V'}\norm{Q}{V} \notag\\
  & \le\norm{Q}{V}^2+(8C_{\rm rc}^2+2\overline\mu)\norm{Q}{H}^2+2\norm{F}{V'}^2.\notag
\end{align}
We can see that~$\norm{F}{V'}\le C_B\norm{u}{\bbR^{M_0}}+D_0\norm{q}{H}$ for suitable positive constants~$C_B$ and~$D_0=\ovlineC{C_{\rm rc},\overline\mu,M_1}$. Note also that, 
proceeding as in~\eqref{react-est1}, with~$A_{\rm rc}=A_{\rm rc1}+A_{\rm rc2}$ and~$(A_{\rm rc1},A_{\rm rc2})\in \clL(V,H)\times\clL(H,V')$, we find that, for an arbitrary~$h\in V$,
\begin{align}
\norm{\left\langle A_{\rm rc}q,h\right\rangle_{V',V}}{\bbR}
&\le\norm{A_{\rm rc1}}{\clL(V,H)}\norm{q}{V}\norm{h}{H}
+\norm{A_{\rm rc2}}{\clL(H,V')}\norm{q}{H}\norm{h}{V}\notag\\
&\le(\alpha_{M_1}\alpha_{1}^{-1}\norm{A_{\rm rc1}}{\clL(V,H)}+\norm{A_{\rm rc2}}{\clL(H,V')})\norm{q}{H}\norm{h}{V}\notag\\
&\le(1+\alpha_{M_1}\alpha_{1}^{-1})2^\frac12\norm{A_{\rm rc}}{\clL(V,H)+\clL(H,V')}\norm{q}{H}\norm{h}{V}.
\notag
\end{align}

Therefore, we can arrive at
\begin{align}
\tfrac{\rmd}{\rmd t}\norm{Q}{H}^2  +\tfrac12\norm{Q}{V}^2
  & \le-\tfrac12\norm{Q}{V}^2+(8C_{\rm rc}^2+2\overline\mu)\norm{Q}{H}^2+4(C_B^2\norm{u}{\bbR^{M_0}}^2+D_0^2\norm{q}{H}^2).\notag
\end{align}
Now, since~$Q\in (\clE^\rmf_{M_1})^\perp$, we find
\begin{align}
\tfrac{\rmd}{\rmd t}\norm{Q}{H}^2  +\tfrac12\norm{Q}{V}^2
  & \le-(\tfrac12\alpha_{M_1+1}-8C_{\rm rc}^2-2\overline\mu)\norm{Q}{H}^2+4(C_B^2\norm{u}{\bbR^{M_0}}^2+D_0^2\norm{q}{H}^2).\notag
\end{align}
Hence, if~$M_1$ is large enough such that~$\alpha_{M_1+1}\ge 16C_{\rm rc}^2-4\overline\mu$, we obtain
\begin{align}
\tfrac{\rmd}{\rmd t}\norm{Q}{H}^2  +\tfrac12\norm{Q}{V}^2
  & \le4(C_B^2\norm{u}{\bbR^{M_0}}^2+D_0^2\norm{q}{H}^2), \notag
\end{align}
and time integration gives us, for all~$t\ge  0$,
\begin{align}
\norm{Q(t)}{H}^2  +\tfrac12\norm{Q}{L^2((0,t),V)}^2
  & \le\norm{Q(0)}{H}^2+4(C_B^2\norm{u}{L^2((0,t),\bbR^{M_0})}^2+D_0^2\norm{q}{L^2((0,t),H)}^2), \notag\\
  & \le\norm{Q(0)}{H}^2+4(C_B^2+D_0^2)C_J\norm{y_0}{H}^2, \notag
 \end{align}
which implies
\begin{align}
\norm{Q}{L^\infty(\bbR_+,H)}^2  +\tfrac12\norm{Q}{L^2(\bbR_+,V)}^2
  & \le(1+4(C_B^2+D_0^2)C_J)\norm{y_0}{H}^2.\notag
\end{align}
In particular,
\begin{align}
&\norm{y}{L^2(\bbR_+,H)}^2+\norm{u}{L^2(\bbR_+,\bbR^{M_0})}^2
=\norm{q}{L^2(\bbR_+,H)}^2+\norm{u}{L^2(\bbR_+,\bbR^{M_0})}^2+\norm{Q}{L^2(\bbR_+,H)}^2\notag\\
&\hspace{3em}\le C_J\norm{y_0}{H}^2+\norm{\Id}{\clL(V,H)}^2\norm{Q}{L^2(\bbR_+,V)}^2\notag\\
&\hspace{3em}\le(C_J+2+8(C_B^2+D_0^2)C_J)\norm{y_0}{H}^2,\notag
\end{align}
and the result follows with~$\widehat C_J\coloneqq2+(1+8C_B^2+8D_0^2)C_J$.
\end{proof}

Considering~\eqref{cost-M1} instead of~\eqref{LQcost} can make numerical computations of the  optimal feedback  operator easier/faster (at least, in the autonomous case). Thus, we shall look for the optimal pair~$(\widehat y,\widehat u)$ solving
problems as
\begin{subequations}\label{optim-pairC}
\begin{align}
 &\clJ^{\beta}_{M_1}(y_0;\widehat y,\widehat u)=\min\left\{\clJ^{\beta}_{M_1}(y_0;y,u)\mid (y,u)\in L^2(\bbR_{+},H)\times L^2(\bbR_{+},\bbR^{M_0})\right\}
\label{optim-min}
\intertext{subject to the constraints}
 &\dot y +Ay+(A_{\rm rc}-\overline\mu\Id)y -Bu=0,\qquad y(0)-y_0=0.\label{sys-zBu}
\end{align}
\end{subequations} 
Following the arguments as in~\cite{BarRodShi11},  as a consequence of the Karush--Kuhn--Tucker conditions and the Dynamic Programming Principle, it turns out that the optimal control function~$\widehat u$ is given by
\begin{equation}\label{u-ricc}
 \widehat u(t)=\clK^{\rm ricc}(t)\widehat y(t)\coloneqq- \beta^{-1}B^*\Pi(t)\widehat y(t),
\end{equation}
where $B^*\in\clL(H,\bbR^{M_\sigma})$ ``is'' the adjoint of~$B$ and $\Pi\succeq0$ gives us the ``cost to go'' as
\begin{equation}\label{Pi0-optimcost}
\tfrac12(\Pi(t)\widehat y(t),\widehat y(t))_H=\tfrac12\norm{P_{\clE^\rmf_{M_1}}\widehat y}{L^2((t,+\infty),H)}^2+\tfrac12\beta\norm{\widehat u}{L^2((t,+\infty),\bbR^{M_0})}^2.
\end{equation}
Furthermore, $\Pi\succeq0$ solves the operator  differential
 Riccati equation
\begin{align}\label{Riccati}
&\dot\Pi+X^*\Pi+\Pi X-\beta^{-1}\Pi BB^*\Pi+P_{\clE^\rmf_{M_1}}=0,\qquad t\ge0,
\intertext{where}
&X=X(t)=-A-A_{\rm rc}(t)+{\overline\mu}\Id. \label{X}
\end{align}

\begin{remark}
Due to the identification~$H=H'$ that we have made, it follows that~$B^*\in\clL(H',(\bbR^{M_0})')=\clL(H,(\bbR^{M_0})')$ for given~$B\in\clL(\bbR^{M_0},H)$. Thus, the product~$BB^*z$ makes sense if, and only if, we also identify~$(\bbR^{M_0})'=\bbR^{M_0}$. More precisely, let us identify~$\bbR^{M_0}$ with column vectors~$\bbR^{M_0}=\bbR^{M_0\times 1}$, then~$(\bbR^{M_0})'=\bbR^{1\times M_0}$ is the space of row vectors. In this case
~$BB^*z$ can (and should) be understood as~$B(B^*z)^\top$. This also shows that we can identify~$(\bbR^{M_0})'=\bbR^{M_0}$ without entering in contradiction with the prior identification~$H=H'$. Recall that, in general we cannot consider two arbitrarily given Hilbert spaces simultaneously as pivot spaces.
\end{remark} 
 
\subsection{Finding the periodic optimal control iteratively}\label{sS:iteratPer-ricc} 
 Note that the initial condition~$\Pi(0)$ is not given in~\eqref{Riccati}. In fact~\eqref{Riccati}
 is to be solved backwards in time, in the unbounded time interval~$[0,+\infty)$. Hence,  in practice,
 the computation (of an approximation) of~$\Pi$ is unfeasible
 for general~$A_{\rm rc}\in L^\infty((0,+\infty),\clL(V,H)+\clL(H,V'))$. This is why, hereafter, we will focus on the case where~$A_{\rm rc}$ is time-periodic, say with period~$\varpi>0$,
 \[
 A_{\rm rc}(t)=A(t+\varpi)\quad\mbox{for all}\quad t\ge0.
 \]
In this case, we can restrict the computations to a finite time interval~$[\tau,\tau+\varpi]$, for  fixed~$\tau\ge0$. We follow a strategy analogous to the one proposed in~\cite{KroRod15} plus one additional iterative step for periodicity: 
\begin{enumerate}[noitemsep,topsep=5pt,parsep=5pt,partopsep=0pt,leftmargin=3em]%
	\renewcommand{\theenumi}{({\rm Ric}-{\roman{enumi}})} %
\renewcommand{\labelenumi}{({\rm Ric}-{\roman{enumi}})}%
\item\label{RicPer:st1}
firstly we choose~$\tau\ge0$ so that~$A_{\rm rc}(\tau+\varpi)\in\clL(V,H)+\clL(H,V')$ is well defined, at time~$t=\tau+\varpi$, and look for the nonnegative definite ($\succeq0$, for short) solution~$\Pi_{\tau+\varpi}$ of the
algebraic operator Riccati equation
\begin{subequations}\label{Riccati-algTbk}
 \begin{align}\label{Riccati-alg}
  &X^*(\tau+\varpi)\Pi_{\tau+\varpi}+\Pi_{\tau+\varpi}  X(\tau+\varpi)-\beta^{-1}\Pi_{\tau+\varpi} BB^{*}\Pi_{\tau+\varpi}+\clC^{*}\clC=0,\\
 &\Pi_{\tau+\varpi}\succeq0,\qquad\clC\in\clL(H)\quad\mbox{with}\quad\clC^{*}\clC=P_{\clE^\rmf_{M_1}};\notag
\end{align}
satisfying
\begin{equation}\notag%
\tfrac12(\Pi_{\tau+\varpi}w,w)_H=\min\tfrac12\norm{P_{\clE^\rmf_{M_1}} y}{L^2((\tau+\varpi,+\infty),H)}^2+\tfrac12\beta\norm{u}{L^2((\tau+\varpi,+\infty),\bbR^{M_0})}^2,
\end{equation}
for~$(y,u)$ subject to the autonomous dynamics
\[
\dot y +Ay+(A_{\rm rc}(\tau+\varpi)-\overline\mu\Id)y -Bu=0,\qquad y(\tau+\varpi)-w=0,\qquad t>\tau+\varpi.
\]
\item\label{RicPer:st2} then, we use~$\Pi^1(\tau+\varpi)=\Pi_{\tau+\varpi}$ as final time condition and solve the differential operator  Riccati equation
backwards in time,
\begin{align}
 &\dot\Pi^1+X^*\Pi^1+\Pi^1  X-\beta^{-1}\Pi ^1 BB^{*}\Pi^1+\clC^{*}\clC=0,\quad \Pi^1(\tau+\varpi)=\Pi_{\tau+\varpi},\label{Riccati-Tbk}\\
 &\Pi^1(t)\succeq0\quad\mbox{for all}\quad t\in[\tau,\tau+\varpi];\notag
\end{align}
\end{subequations}
\item\label{RicPer:st3} finally, if ~$\norm{\Pi^1(\tau)-\Pi^1(\tau+\varpi)}{\clL(H)}$  we accept~$\Pi^1$ as periodic solution. Otherwise, we repeat step~\ref{RicPer:st2},  solving~\eqref{Riccati-Tbk} with final condition~$\Pi^n(\tau+\varpi)=\Pi^{n-1}(\tau)$ until we find a solution with~$\Pi^n(\tau)\approx\Pi^n(\tau+\varpi)$, $n\ge2$.
\end{enumerate}

\begin{remark}
We can take~$\clC=\clC^{*}=\clC^{*}\clC=P_{\clE^\rmf_{M_1}}$ in~\eqref{Riccati-alg}. We just write it as the product~$\clC^{*}\clC$ to have a more canonical form for the Riccati equation.
\end{remark}

The next result concerns the convergence of iterates of solutions of~\eqref{Riccati-Tbk}.
\begin{theorem}\label{T:convRicPer}
Assume that~$A_{\rm rc}$ satisfies Assumption~\ref{A:A1}, is time periodic with period~$\varpi>0$, and is well defined at~$t=\tau+\varpi\ge0$ with~$A_{\rm rc}(\tau+\varpi)\in\clL(H,V')+\clL(V,H)$. Then, the sequence~$\Pi^n(\tau)$ as in step~\ref{RicPer:st3} above, with~$\Pi^1(\tau+\varpi)=\Pi_{\tau+\varpi}$, concerning solutions of the differential Riccati equation~\eqref{Riccati-Tbk} converges, in the weak operator topology, to the operator~$\Pi_{\rm p}(\tau)$ given by the evaluation at initial time~$\tau$ of the periodic solution
\begin{align}
 &\dot\Pi_{\rm p}+X^*\Pi_{\rm p}+\Pi_{\rm p}  X-\beta^{-1}\Pi_{\rm p} BB^{*}\Pi_{\rm p}+\clC^{*}\clC=0,\quad \Pi_{\rm p}(\tau+\varpi)=\Pi_{\rm p}(\tau),\label{Riccati-per}\\
 &\Pi_{\rm p}(t)\succeq0\quad\mbox{for all}\quad t\in[\tau,\tau+\varpi],\notag
\end{align}
giving us the optimal cost to go (cf.~\eqref{Pi0-optimcost}) 
\[
\tfrac12(\Pi_{\rm p}(\tau)w,w)_H=\min\tfrac12\norm{P_{\clE^\rmf_{M_1}} y}{L^2((\tau,+\infty),H)}^2+\tfrac12\beta\norm{u}{L^2((\tau,+\infty),\bbR^{M_0})}^2,
\]
for~$(y,u)$ subject to the nonautonomous time-periodic dynamics
\[
\dot y +Ay+(A_{\rm rc}-\overline\mu\Id)y -Bu=0,\qquad y(\tau)-w=0,\qquad t\ge\tau.
\]
Moreover, for all~$(w_1,w_2)\in H\times H$, we have that~$(\Pi^{n}(\tau)w_1,w_2)_H$ converges exponentially to~$(\Pi_{\rm p}(\tau)w_1,w_2)_H$.
\end{theorem}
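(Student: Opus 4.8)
The plan is to read the iteration \eqref{Riccati-Tbk} through the optimal-control interpretation \eqref{Pi0-optimcost} of the Riccati solution. I introduce the one-period solution map $\clR$ sending a nonnegative terminal operator $\Psi$ to the value $\Pi_\Psi(\tau)$ at the left endpoint, where $\Pi_\Psi$ solves \eqref{Riccati-Tbk} backwards on $[\tau,\tau+\varpi]$ with $\Pi_\Psi(\tau+\varpi)=\Psi$; since the coefficient $X(t)$ in \eqref{X} is $\varpi$-periodic, this map is independent of the particular period, and $\tfrac12(\clR(\Psi)w,w)_H$ equals the infimum over the dynamics on $[\tau,\tau+\varpi]$ (with $y(\tau)=w$) of the running cost plus the terminal penalty $\tfrac12(\Psi y(\tau+\varpi),y(\tau+\varpi))_H$. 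The iteration then reads $\Pi^n(\tau)=\clR^n(\Pi_{\tau+\varpi})$, and by the Dynamic Programming Principle together with the period shift, $\tfrac12(\Pi^n(\tau)w,w)_H$ is exactly the value of the finite-horizon problem on $[\tau,\tau+n\varpi]$ with terminal penalty $\tfrac12(\Pi_{\tau+\varpi}\,y(\tau+n\varpi),y(\tau+n\varpi))_H$. The target $\Pi_{\rm p}(\tau)$ I would define directly as the infinite-horizon cost-to-go at time $\tau$; since all data are $\varpi$-periodic this cost-to-go is itself $\varpi$-periodic and, again by the Dynamic Programming Principle, solves \eqref{Riccati-per}. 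Thus the whole statement reduces to showing that the finite-horizon values converge, exponentially, to the infinite-horizon value as $n\to+\infty$.

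First I would record the structural ingredients. Existence and finiteness of $\Pi_{\rm p}$, a uniform bound $\sup_n\norm{\Pi^n(t)}{\clL(H)}<+\infty$, and $\Pi^n\succeq0$ all follow from stabilizability: Theorem~\ref{T:stabOP} and Corollaries~\ref{C:stabOP}--\ref{C:stabOP-coo} furnish an admissible feedback whose closed-loop cost is $\le C\norm{w}{H}^2$, which bounds every infimum from above; detectability, in the form of Theorem~\ref{T:detectM1}, upgrades the bound on the projected cost $\clJ^\beta_{M_1}$ entering \eqref{Pi0-optimcost} to a bound on the full state, so the minimizing trajectories lie in $L^2(\bbR_+,H)$ with norm controlled by $\norm{w}{H}$.

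The convergence of the values I would then obtain by a two-sided comparison. For the upper bound I truncate the infinite-horizon optimal pair $(y^*,u^*)$ for $\Pi_{\rm p}(\tau)$ to $[\tau,\tau+n\varpi]$ and use it as a competitor in the finite-horizon problem, giving $\tfrac12(\Pi^n(\tau)w,w)_H\le\tfrac12(\Pi_{\rm p}(\tau)w,w)_H+\tfrac12(\Pi_{\tau+\varpi}y^*(\tau+n\varpi),y^*(\tau+n\varpi))_H$. For the lower bound I take the finite-horizon optimal pair $(y^n,u^n)$ and extend it past $\tau+n\varpi$ by the stabilizing feedback of Theorem~\ref{T:stabOP}, producing an admissible infinite-horizon competitor; since the discarded terminal penalty is nonnegative ($\Pi_{\tau+\varpi}\succeq0$) and the tail cost of the extension is, by Corollary~\ref{C:stabOP}, bounded by $C\norm{y^n(\tau+n\varpi)}{H}^2$, I obtain $\tfrac12(\Pi_{\rm p}(\tau)w,w)_H\le\tfrac12(\Pi^n(\tau)w,w)_H+C\norm{y^n(\tau+n\varpi)}{H}^2$. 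Both error terms are governed by the $H$-norm of a minimizing trajectory at the far endpoint $\tau+n\varpi$.

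It remains to show these boundary norms decay exponentially, uniformly in $n$, and this is the step I expect to be the main obstacle. The difficulty is the transient constant $\varrho\ge1$ in \eqref{goal-intro}: a single period need not be contractive in $\clL(H)$, so one cannot read off pointwise decay merely over $[\tau,\tau+\varpi]$. I would instead combine the uniform cost bound above with detectability (Theorem~\ref{T:detectM1}) to get $\norm{y^n}{L^2((\tau,\tau+n\varpi),H)}^2\le C\norm{w}{H}^2$ uniformly in $n$, and then use parabolic smoothing together with the closed-loop estimate \eqref{goal-intro}, applied over a block of enough periods that $\varrho\rme^{-\mu k\varpi}<1$, to convert this integrated decay into $\norm{y^n(\tau+n\varpi)}{H}^2\le C\rme^{-2\mu n\varpi}\norm{w}{H}^2$ (and likewise for $y^*$). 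Feeding this back into the sandwich yields $\lvert(\Pi^n(\tau)w,w)_H-(\Pi_{\rm p}(\tau)w,w)_H\rvert\le C\rme^{-2\mu n\varpi}\norm{w}{H}^2$, the asserted exponential convergence of the diagonal quadratic forms; polarization extends it to $(\Pi^n(\tau)w_1,w_2)_H\to(\Pi_{\rm p}(\tau)w_1,w_2)_H$ for all $w_1,w_2$, and this, with the uniform operator bound, is exactly convergence in the weak operator topology. The cost-to-go identity for $\Pi_{\rm p}$ is then inherited from the convergence of the finite-horizon values.
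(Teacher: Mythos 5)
Your overall architecture is the same as the paper's: you interpret $\Pi^n(\tau)$ through optimal control (the paper realizes your finite-horizon value with terminal penalty $\Pi_{\tau+\varpi}$ equivalently as the infinite-horizon value of an auxiliary problem whose coefficient is frozen at $A_{\rm rc}(\tau+n\varpi)$ for $t\ge\tau+n\varpi$, so that the tail cost-to-go past $\tau+n\varpi$ is exactly the algebraic solution $\Pi_{\tau+\varpi}$); you obtain the uniform bounds~\eqref{CJ-Datko} from the stabilizability results (Theorem~\ref{T:stabOP}, Corollaries~\ref{C:stabOP} and~\ref{C:stabOP-coo}) together with the detectability-type upgrade of Theorem~\ref{T:detectM1}; you run the same two-sided comparison via the Dynamic Programming Principle, leaving error terms of size $\norm{\widehat y(\tau+n\varpi)}{H}^2$ and $\norm{\widehat z_n(\tau+n\varpi)}{H}^2$; and you finish by polarization. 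All of that matches.

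The genuine gap is in the step you yourself flag as the main obstacle: the pointwise exponential decay of the \emph{optimal} trajectories at $t=\tau+n\varpi$. Your proposed mechanism applies the closed-loop estimate~\eqref{goal-intro} over a block of $k$ periods with $\varrho\rme^{-\mu k\varpi}<1$; but \eqref{goal-intro} is the conclusion of Theorem~\ref{T:stabOP} for the system closed with the \emph{oblique-projection} feedback $\clF^{\rm obli}$, whereas the trajectories $y^n$ and $y^*$ in your sandwich evolve under the Riccati feedbacks $-\beta^{-1}B^*\Pi^n(t)$ and $-\beta^{-1}B^*\Pi_{\rm p}(t)$. At this point of the argument no pointwise exponential estimate for these optimal loops is available --- their exponential stability is part of what is being established --- so the step is circular as written, and parabolic smoothing does not help (it yields instantaneous regularization, not a decay rate). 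The correct conversion of your uniform integrated bound $\norm{y^n}{L^2((\tau,+\infty),H)}^2\le C\norm{w}{H}^2$ into pointwise exponential decay is Datko's theorem: since the optimal feedback operators are uniformly bounded (by the uniform bound on $\Pi^n$ from~\eqref{CJ-Datko}), the closed loops are $C(0,e)$ evolution processes in the sense of~\cite{Datko72}, and \cite[Thm.~1]{Datko72} gives $\norm{y^n(t)}{H}^2\le D\rme^{-\epsilon(t-\tau)}\norm{w}{H}^2$ with $D$ and $\epsilon$ independent of $n$, precisely because the cost bound $C_J$ is independent of $n$ (obtained, as in your extension argument, by comparison with the oblique-projection feedback cost via Corollary~\ref{C:stabOP-coo}). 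This is exactly how the paper closes that step; with this single substitution your proof goes through.
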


\begin{proof}
We denote the optimal solution of the periodic dynamics by~$(\widehat y,\widehat u)$, thus
\begin{equation}\label{cost-tau}
\tfrac12(\Pi_{\rm p}(\tau)w,w)_H=\tfrac12\norm{P_{\clE^\rmf_{M_1}}\widehat y}{L^2((\tau,+\infty),H)}^2+\tfrac12\beta\norm{\widehat u}{L^2((\tau,+\infty),\bbR^{M_0})}^2.
\end{equation}

Let us now consider the analog system where we 
take a time independent~$A_{n,\rm rc}$ for time~$t\ge\tau+n\varpi$, namely,
\[
 \dot z_n +Az_n +(A_{n,\rm rc}-\overline\mu\Id)z_n  -Bv_n =0,\qquad z_n (\tau)=w,
\]
with
\[
A_{n,\rm rc}(t)\coloneqq\begin{cases}
A_{\rm rc}(t),&\quad\mbox{for}\quad t\in(\tau,\tau+n\varpi),\\
A_{\rm rc}(\tau+n\varpi),&\quad\mbox{for}\quad t\in[\tau+n\varpi,+\infty).
\end{cases}
\]
Analogously, for the corresponding optimal cost and optimal pair~$(\widehat z_n ,\widehat v_n )$, we find
\begin{equation}\label{cost-tau-n}
\tfrac12(\Pi^{n}(\tau)w,w)_H=\tfrac12\norm{P_{\clE^\rmf_{M_1}}\widehat z_n }{L^2(\bbR_{\tau+},H)}^2+\tfrac12\beta\norm{\widehat v_n }{L^2(\bbR_{\tau+},\bbR^{M_0})}^2
\end{equation}
where~$\Pi^{n}$ is the solution of the corresponding Riccati equation. By the dynamical programming principle, and the time-periodicity it follows that for~$t\ge\tau+n\varpi$, we have that~$\Pi^{n}(t)=\Pi^{n}(\tau+n\varpi)$ where~$\Pi^{n}(\tau+n\varpi)$ solves the algebraic equation in~\eqref{Riccati-alg}, hence
\[
\Pi^{n}(t)=\Pi_{\tau+\varpi}=\Pi^1(\tau+\varpi)\quad\mbox{for all}\quad t\ge\tau+n\varpi,
\]
with~$\Pi_{\tau+\varpi}$ as in~\eqref{Riccati-alg} and~\eqref{Riccati-Tbk}.

The  optimal costs are bounded as follows.
\begin{subequations}\label{CJ-Datko}
\begin{align}
\tfrac12(\Pi_{\rm p}(\tau)w,w)_H
&\le C_J\norm{w}{H}^2,\\
\tfrac12(\Pi^n(\tau)w,w)_H
&\le C_J\norm{w}{H}^2,\quad \mbox{for all}\quad n\in\bbN_+,
\end{align}
\end{subequations}
for suitable positive constant
$C_J$. Note that, by optimality, the Riccati feedback gives us a cost smaller that the one obtained with the explicit oblique projection feedback, hence by Theorem~\ref{T:stabOP} and Corollary~\ref{C:stabOP-coo} the constant~$C_J$
can be taken depending on the upper bound~$C_{\rm rc}$ for the norm of~$A_{\rm rc}$ as in Assumption~\ref{A:A1}, thus independent of~$n$.
Let us now denote the interval
\[
I_n\coloneqq(\tau,\tau+n\varpi)
\]
and the truncated cost functional
\begin{align}
\clJ_{\tau}^n(w;y,u)&\coloneqq\tfrac12\norm{P_{\clE^\rmf_{M_1}}y}{L^2((I_n,H)}^2+\tfrac12\beta\norm{u}{L^2(I_n,\bbR^{M_0})}^2.\notag
\end{align}

By optimality and the dynamic programming principle we also find that
\begin{align}
\tfrac12(\Pi_{\rm p}(\tau)w,w)_H&\le\clJ_{\tau}^n(w;\widehat z_n,\widehat v_n)
+\tfrac12(\Pi_{\rm p}(\tau)\widehat z_n(\tau+n\varpi),\widehat z_n(\tau+n\varpi))_H\notag\\
&=\tfrac12(\Pi^n(\tau)w,w)_H-\tfrac12(\Pi_{\tau+\varpi}\widehat z_n(\tau+n\varpi),\widehat z_n(\tau+n\varpi))_H\notag\\
&\quad+\tfrac12(\Pi_{\rm p}(\tau)\widehat z_n(\tau+n\varpi),\widehat z_n(\tau+n\varpi))_H,\notag
\\
\tfrac12(\Pi^n(\tau)w,w)_H&\le\clJ_{\tau}^n(w;\widehat y,\widehat u)
+\tfrac12(\Pi_{\tau+\varpi}\widehat y(\tau+n\varpi),\widehat y(\tau+n\varpi))_H\notag\\
&=\tfrac12(\Pi_{\rm p}(\tau)w,w)_H-\tfrac12(\Pi_{\rm p}(\tau)\widehat y(\tau+n\varpi),\widehat y(\tau+n\varpi))_H\notag\\
&\quad+\tfrac12(\Pi_{\tau+\varpi}\widehat y(\tau+n\varpi),\widehat y(\tau+n\varpi))_H,\notag
\end{align}
which give us
\begin{align}
&(\Pi_{\rm p}(\tau)w,w)_H-(\Pi^n(\tau)w,w)_H\notag\\
&\hspace{2em}\le(\Pi_{\rm p}(\tau)\widehat z_n(\tau+n\varpi),\widehat z_n(\tau+n\varpi))_H-(\Pi_{\tau+\varpi}\widehat z_n(\tau+n\varpi),\widehat z_n(\tau+n\varpi))_H\notag\\
&\hspace{2em}\le 4C_J\norm{\widehat z_n(\tau+n\varpi)}{H}^2,\notag
\\
&(\Pi^n(\tau)w,w)_H-(\Pi_{\rm p}(\tau)w,w)_H\notag\\
&\hspace{2em}\le\tfrac12(\Pi_{\tau+\varpi}\widehat y(\tau+n\varpi),\widehat y(\tau+n\varpi))_H
-\tfrac12(\Pi_{\rm p}(\tau)\widehat y(\tau+n\varpi),\widehat y(\tau+n\varpi))_H\notag\\
&\hspace{2em}\le 4C_J\norm{\widehat y(\tau+n\varpi)}{H}^2.\notag
\end{align}

Since the optimal control is given in feedback form with a bounded feedback operator, the resulting dynamical system gives us a $C(0,e)$ evolution process (cf.~\cite[Def.~1]{Datko72}). Hence, by Datko Theorem~\cite[Thm.~1]{Datko72} (see also~\cite[Thm.~2.2]{Gibson79}) we conclude that the optimal solutions converge exponentially to zero, that is,
\begin{align}
\norm{\widehat z_n(t)}{H}^2&\le D\rme^{-\epsilon (t-\tau)}\norm{\widehat z_n(\tau)}{H}^2=D\rme^{-\epsilon (t-\tau)}\norm{w}{H}^2,\quad\mbox{for all}\quad t\ge\tau,\notag\\
\norm{\widehat y(t)}{H}^2&\le D\rme^{-\epsilon (t-\tau)}\norm{\widehat y(\tau)}{H}^2=D\rme^{-\epsilon (t-\tau)}\norm{w}{H}^2,\quad\mbox{for all}\quad t\ge\tau,\notag
\end{align}
with~$D$ and~$\epsilon$ independent of~$n$. We refer the reader, in particular, to the arguments in the proof of~\cite[Thm.~1]{Datko72} where the exponential stability is derived from an inequality as~\eqref{CJ-Datko}, namely,~\cite[Equ.~(7)]{Datko72}. Therefore, we have
\begin{align}
(\Pi_{\rm p}(\tau)w,w)_H-(\Pi^n(\tau)w,w)_H
&\le 4C_JD\rme^{-\epsilon n\varpi}\norm{w}{H}^2,\notag
\\
(\Pi^n(\tau)w,w)_H-(\Pi_{\rm p}(\tau)w,w)_H
&\le 4C_JD\rme^{-\epsilon n\varpi}\norm{w}{H}^2,\notag
\end{align}
thus~$(\Pi^n(\tau)w,w)_H$ converges to~$(\Pi_{\rm p}(\tau)w,w)_H$,
exponentially with rate~$\epsilon \varpi$,
\begin{align}
\norm{(\Pi^n(\tau)w,w)_H-(\Pi_{\rm p}(\tau)w,w)_H}{\bbR}
&\le 4C_JD\rme^{-\epsilon \varpi n}\norm{w}{H}^2,\quad \mbox{for all}\quad w\in H.\notag
\end{align}

For an arbitrary pair~$(w_1,w_2)\in H\times H$, using the symmetry and linearity of~$\Pi_{\rm p}$ and~$\Pi^{n}$, and the triangle inequality, we obtain
\begin{align}
&\norm{2(\Pi_{\rm p}w_1,w_2)_H-2(\Pi^{n}w_1,w_2)_H}{\bbR}\notag\\
&\hspace{1.5em}\le \norm{(\Pi_{\rm p}(w_1+w_2),(w_1+w_2))_H-(\Pi^{n}(w_1+w_2),(w_1+w_2))_H}{\bbR}\notag\\
&\hspace{2.5em}+\norm{-(\Pi_{\rm p}w_1,w_1)_H+(\Pi^{n}w_1,w_1)_H}{\bbR}+\norm{-(\Pi_{\rm p}w_2,w_2)_H+(\Pi^{n}w_2,w_2)_H}{\bbR}\notag\\
&\hspace{1.5em}\le4C_JD\rme^{-\epsilon \varpi n}\left(\norm{w_1+w_2}{H}^2+\norm{w_1}{H}^2+\norm{w_2}{H}^2\right)\le12C_JD\rme^{-\epsilon \varpi n}\left(\norm{w_1}{H}^2+\norm{w_2}{H}^2\right)\!,\notag
\end{align}
which implies that, for all~$(w_1,w_2)\in H\times H$, the scalar product
$(\Pi^{n}w_1,w_2)_H$ converges exponentially to~$(\Pi_{\rm p}w_1,w_2)_H$.
In particular, $\Pi^{n}$ converges to~$\Pi_{\rm p}$ in the weak operator topology.
\end{proof}

 For results concerning the existence and uniqueness of solutions for general equations in the form~\eqref{Riccati-Tbk} we refer the reader to~\cite{CurtainPritchard76} and references therein.

\subsection{Homotopy step for algebraic Riccati equations. Stabilizability and detectability}\label{sS:homotopy}
In the process of solving~\eqref{Riccati-alg}, through a Newton iteration, we shall need to provide a stabilizing initial guess~$\Pi_G$ so that~$X-BB^*\Pi_G$ is stable. That is, essentially we need a
stabilizing feedback operator. In general, finding~$\Pi_G$ is nontrivial, we shall overcome this issue by considering the family of equations
\begin{subequations}\label{Riccati-alg-s}
\begin{align}
&\clX_s^*\Pi+\Pi\clX_s-\beta^{-1}\Pi BB^{*}\Pi+\clC^{*}\clC=0,\qquad s\in[0,1],
\intertext{with}
&\clX_s\coloneqq -A-s(A_{\rm rc}(\tau+\varpi)-\overline\mu\Id).
\end{align}
\end{subequations}
Recalling~\eqref{eigfeigv}, for $s=0$ the operator~$\clX_0=-A$ is stable  and an initial stabilizing feedback is easier to find, for example, ~$\clX_0-BB^{*}\Pi_G$ is stable with~$\Pi_G=\zero$.

Then we shall consider a discrete homotopy with $N+1$ steps connecting~$\clX_0=-A$ to~$\clX_1=X(\tau+\varpi)$, where we shall use the solution of the Riccati equation for $s=(k-1)\frac{1}{N}$ as initial guess to solve the equation for~$s=k\frac{1}{N}$, $1\le k\le N$. Note that we are essentially replacing the reaction-convection term~$A_{\rm rc}(\tau+\varpi)$ by~$sA_{\rm rc}(\tau+\varpi)$ and asking for a smaller stability rate~$s\overline\mu\le\overline\mu$. Recall that we know that the number of actuators and in Theorem~\ref{T:stabOP}, to guarantee a stability rate~$\overline\mu$, depends on an upper bound~$C_{\rm rc}$ as in Assumption~\ref{A:A1}, since this bound is smaller for $0\le s<1$, we have that there exists a set of actuators that stabilize the system with rate~$\overline\mu$ for all~$s\in[0,1]$.
Analogously, we can see that the natural number~$M_1$ in Theorem~\ref{T:detectM1} also depends on the upper bound~$C_{\rm rc}$ in Assumption~\ref{A:A1}, thus there exists such an~$M_1$ for which the same theorem holds for all~$s\in[0,1]$.
Therefore, we can follow the arguments in section~\ref{sS:optim-ricc} to guarantee the existence of a nonnegative definite solution for~\eqref{Riccati-alg-s} for each~$s\in[0,1]$.

At this point we would like to recall that, in general,   the existence of a nonnegative definite solution for general Riccati equations in the form~\eqref{Riccati-alg-s} is related to concepts of stabilizability and detectability, which we recall now, for the sake of completeness.
Let us be given Hilbert spaces~$V$ and~$H=H'$, satisfying Assumption~\ref{A:A0cdc},
and an operator~$\fkL\in\clL(V,V')$. We assume that, as expected for linear parabolic-like systems, weak solutions do exist for the autonomous linear system
\begin{equation}\label{sys.fkL}
\dot y=\fkL y,\qquad y(0)=y_0\in H,
\end{equation}
and satisfy
\[
y\in W_{\rm loc}(\bbR_+,V,V')\coloneqq\{y\in L^2_{\rm loc}(\bbR_+,V)\mid \dot y\in L^2_{\rm loc}(\bbR_+,V')\}\subset C(\overline\bbR_+,H).
\]
\begin{definition}
The operator~$\fkL\in\clL(V,V')$ is said exponentially stable, if there are constants~$\varrho\ge1$ and~$\mu>0$ such that every weak solution of~\eqref{sys.fkL}
satisfies~\eqref{goal-intro}.
\end{definition}

\begin{definition}\label{D:stab-pair}
The pair~$(\clX_s,B)\in\clL(V,V')\times\clL(\bbR^{M_0},H)$ is said stabilizable, if there exists~$K\in \clL(H,\bbR^{M_0})$ so that~$\clX_s+B K$ 
is exponentially stable.
\end{definition}

\begin{definition}\label{D:detect-pair}
The pair~$(\clX_s, \clC)\in\clL(V,V')\times\clL(H)$ is said detectable, if
there exists~$L\in \clL(H)$ so that~$\clX_s+L\clC$ 
is exponentially stable.
\end{definition}

Observe that the detectability of~$(\clX_s, \clC)$, as in Definition~\ref{D:detect-pair}, implies that if~$u=u(t)\in\bbR^{M_0}$ is a control function so that, for the weak solution of
\[
\dot y=\clX_s y+ Bu,\qquad z(0)=z_0\in H,
\]
we have that (cf.~\eqref{cost-M1})
 \begin{equation}\label{costC}
\tfrac12\norm{\clC y}{L^2(\bbR_+,H)}^2+\tfrac12\beta\norm{u}{L^2(\bbR_+,\bbR^{M_0})}^2
\end{equation}
 is bounded, then also (cf.~\eqref{LQcost})
 \begin{equation}\label{costC-gen}
\tfrac12\norm{y}{L^2(\bbR_+,H)}^2+\tfrac12\beta\norm{u}{L^2(\bbR_+,\bbR^{M_0})}^2
\end{equation}
is bounded.
Indeed, from
\begin{align}\notag
\dot y=\clX_s y+B u=\clX_s y+L \clC y -L \clC y+B u,
\end{align}
using the exponential stability of~$\clX_s+L \clC $ and looking at
\[
p\coloneqq-L \clC y +B u\in H,\qquad\norm{p}{L^2(\bbR_+, H)}^2<+\infty,
\]
as a perturbation, by Duhamel (variation of constants) formula   we can see that~\eqref{costC-gen} is bounded.
Finally, we recall that from the boundedness of~\eqref{costC-gen}, with the Riccati  control~$u=-\beta^{-1}B^{*}\Pi y$ minimizing~\eqref{costC}, we can derive that~$\clX_s-\beta^{-1}BB^{*}\Pi$ is exponentially stable, due to Datko results~\cite[Lem.~1 and Thm.~1]{Datko72}.

\subsection{On the computation of the control input}
Recalling~\eqref{u-ricc}, the control input for the Riccati feedback is given by~$u=-\beta^{-1}B^*\Pi y$, while for the oblique projection feedback it is given by~$u=B^{-1}\clF^{\rm obli} y$; see~\eqref{u-obli}. The oblique projection~$P_{\clU_{\sigma(M)}}^{\clE_{\sigma(M)}^{\perp V'}}$ in~\eqref{FeedKunRod} is an extension of~$P_{\clU_{\sigma(M)}}^{\clE_{\sigma(M)}^{\perp}}$ which can be computed as
$P_{\clU_{\sigma(M)}}^{\clE_{\sigma(M)}^{\perp}}z= B[(\clE_{\sigma(M)},\clU_{\sigma(M)})_H]^{-1} [(\clE_{\sigma(M)},z)_H]$,
where~$[(\clE_{\sigma(M)},z)_H]\in\bbR^{\sigma(M)\times 1}$ is the vector the~$i$th row of which contains the scalar product~$(e_{M,i},z)_H$, $1\le i\le M_\sigma$, involving the $i$th eigenfunction in the set spanning~$\clE_{\sigma(M)}$ and~$[(\clE_{\sigma(M)},\clU_{\sigma(M)})_H]$ is the matrix the entry of which in the $i$th row and $j$th column is given by $(e_{M,i},\Phi_{M,j})_H$ where~$\Phi_j$ is the~$j$th actuator in the set spanning~$\clU_{\sigma(M)}$ (cf.~\cite[Lem.~2.8]{KunRod19-cocv}).

Therefore, in order to compare the optimal cost associated with Riccati feedback to the larger cost associated with the explicit oblique projection feedback, we can compute the corresponding control input vectors as
\begin{subequations}
\begin{align}
&u^{\rm ricc}=-\beta^{-1}B^*\Pi y^{\rm ricc};\label{uy-ricc}\\
&u^{\rm obli}=[(\clE_{{\sigma(M)}},\clU_{{\sigma(M)}})_H]^{-1} [(\clE_{{\sigma(M)}},Ay^{\rm obli}+A_{\rm rc}y^{\rm obli}-\lambda y^{\rm obli})_H].\label{uy-obli}
\end{align} 
\end{subequations}

\subsection{Stabilizability: Riccati versus oblique projection}
Assumption~\ref{A:poincare} is required for stabilizability with oblique projection feedbacks as~\eqref{FeedKunRod}. For scalar parabolic equations, such assumption is satisfied for suitable locations of the actuators; recall~\eqref{loc-actOP1D}. Riccati based stabilizing feedbacks can be found also  for other locations where the explicit oblique projection feedback may be not stabilizing, namely, for actuators located in an apriori given subdomain~$\clO\subset\Omega$; see~\cite{BreKunRod17,PhanRod18-mcss}.
Therefore, one advantage of Riccati feedback is that it may succeed to stabilize the system when the explicit feedback fails. A second advantage is that it gives us the solution minimizing a classical energy functional.

On the other hand some advantages of the explicit feedback are that it is less expensive to compute, and it can be computed online in real time, while Riccati has to be computed offline. The computation cost  of the explicit feedback is essentially the same for autonomous and nonautonomous systems, while  Riccati is more expensive for nonautonomous systems (involving the solution of a differential equation) than for autonomous systems (involving the solution of an algebraic equation). Furthermore, Riccati  is  impossible to solve in the entire time interval $[0,+\infty)$ for general nonautonomous systems. 

For the particular case of nonautonomous time-periodic dynamics, it is possible to solve the Riccati equation, because its solution is also time-periodic with the same time-period, thus we can look for the periodic solution in a finite time interval with length equal to the time-period. Computing this solution is still an expensive numerical task, but if we succeed, then  the resulting feedback will likely stabilize the system when the explicit one fails.

Here we should also mention that in practical applications we could be interested in feedback operators which are able to squeeze the norm of the solution after a certain  time horizon~$T$, say, with~$T$ large enough. In this case Riccati feedbacks can also be useful (for general nonautonomous systems), as proposed in~\cite{KroRod15}, with an appropriate guess/operator~$\Pi_T$ at final time~$t=T$ for the differential Riccati equation. Here, since the time interval of interest is~$(0,T)$ we can, for example, assume that the dynamics is autonomous for time~$t\ge T$ as done in~\cite[sect.~5.3.2]{KroRod15} and use the corresponding solution of the algebraic Riccati equation for~$\Pi_T$.

\section{On the numerical implementation}\label{S:NumerImpl}
We consider linear parabolic equations as~\eqref{sys-y-parab}.
For simplicity, we restrict the exposition to the case of homogeneous Neumann boundary conditions,
\begin{subequations}\label{sys-y-parab-clK}
\begin{align}
 &\tfrac{\p}{\p t} y +(-\nu\Delta+\Id) y+ay +b\cdot\nabla y
 =B\clK y,\\
  &\tfrac{\p y}{\p\bfn}\rest{\p\Omega}=0,\qquad
y(0)=y_0,
 \intertext{with a linear continuous input feedback  control operator~$\clK\colon V=H^1(\Omega)\to \bbR^{M_0}$ and a linear isomorphism~$B\in\clL(\bbR^{M_0},\clU_M)$ as control operator, with}
 &\clU_{M_0}=\linspan\{\Phi_i,\mid 1\le i\le M_0\}\subset H=L^2(\Omega).
 \end{align}
\end{subequations}
The procedure presented hereafter can be used for homogeneous Dirichlet boundary conditions as well, by taking the appropriate matrices after spatial discretization.

\subsection{Discretization of the dynamical system}
As spatial discretization we consider piecewise linear finite-elements (based on
the classical hat functions), followed by a temporal discretization based on a Crank--Nicolson/Adams--Bashforth scheme. 
Briefly, for equations as~\eqref{sys-y-parab-clK},
let~$\bfS$ and~$\bfM$ be the stiffness and mass matrices and denote~$\bfS_{\nu}=\nu\bfS+\bfM$.
 Let~$\bfG_{x_i}$ be the discretizations of the directional derivatives~$\frac{\p}{\p x_i}$ and let~$\bfD_{\overline v}$ be the diagonal matrix, the entries of which
are those of the vector~$\overline v\in\bbR^{N\times 1}$, ~$(\bfD_{\overline v})_{(n,n)}=\overline v_{(n,1)}$. 
After spatial discretization we obtain 
\begin{align}\label{sys-y-D1}
 &\bfM\dot{\overline y} =-\bfS_\nu \overline  y-\tfrac{\bfM \bfD_{\overline a}
 +\bfD_{\overline a}\bfM}{2}\overline y
 -{\textstyle\sum\limits_{i=1}^d}(\bfD_{\overline b_i}\bfG_{x_i})\overline  y
 +\bfM\bfU\bfK\overline y,\qquad
\overline y(0)=\overline{y_0},
\end{align}
where~$\overline y(t)\in\bbR^{N\times 1}$ is the vector of values of the state at the spatial mesh (triangulation) points at time~$t\ge0$, and
\begin{equation}\label{bfU}
\bfU\coloneqq[\overline\Phi_{1}\dots \overline\Phi_{M_0}]\in\bbR^{N\times M_0}
\end{equation}
is the matrix the columns of which contain the finite-elements vectors corresponding to the actuators. Finally, $\bfu(t)=\bfK(t)\overline y(t)\in\bbR^{M_0\times 1}$ is the computed input feedback control~$u(t)=\clK(t) y(t)$ the computation of which shall be addressed in more detail
in sections~\ref{sS:comput-u-ricc} and~\ref{sS:comput-u-obli}.

Let us denote our finite dimensional finite-elements  space by
\begin{equation}\label{feHN}
 H_N=\linspan\{\fkh_n\mid 1\le n\le N\}\subset V\subset H,
\end{equation}
which is spanned by the hat functions~$\fkh_n$,
associated with the triangulation of the spatial domain~$\Omega$. Essentially, we
look for an approximation of the state~$y(\Bigcdot,t)$ as
\[
y(\Bigcdot,t) \approx\textstyle\sum\limits_{n=1}^N y(p_n,t)\fkh_n,
\quad \overline  y(t)\in\bbR^{N\times 1},\quad\overline  y_{(n,1)}(t)\coloneqq y(p_n,t),
\]
where~$\fkh_i$ is the hat function satisfying~$\fkh_i(p_i)=1$ and~$\fkh_j(p_j)=0$ for $j\ne i$, where the~$p_n$s, $1\le n\le N$, are the points in the mesh.
Denoting
\[
 \bfL^0\coloneqq\tfrac{\bfM \bfD_{\overline a}+\bfD_{\overline a}\bfM}{2},
 \qquad
 \bfL^1\coloneqq
 {\textstyle\sum\limits_{i=1}^d}(\bfD_{\overline b_i}\bfG_{x_i}),\qquad\ \bfF\coloneqq\bfM\bfU\bfK,
\]
after subsequent temporal
discretization, for a fixed time step~$k>0$ we find
\begin{align}
 \tfrac{\bfM {\overline y}_{j+1}-\bfM {\overline y}_{j}}{k}
 &=-\tfrac{\bfS_\nu  {\overline y}_{j+1}+\bfS_\nu  {\overline y}_{j}}{2}
  -\tfrac{\bfL^0_{j+1} {\overline y}_{j+1}+\bfL^0_j {\overline y}_{j}}{2}
  -\tfrac{\bfL^1_{j+1} {\overline y}_{j+1}+\bfL^1_j {\overline y}_{j}}{2}
  +\tfrac{\bfF_{j+1} {\overline y}_{j+1}+\bfF_j {\overline y}_{j}}{2}\notag
\end{align}
where the subscript integer~$j$ stands for evaluation at time~$t_j\coloneqq (j-1)k$,
\[
 {\overline y}_{j}={\overline y}(t_j),\quad\bfL^m_{j}=\bfL^m(t_j),\quad\bfF_{j}=\bfF(t_j),
 \qquad m\in\{0,1\},\quad j\ge1.
\]
Therefore, we arrive at
\begin{align}
 &\quad(2\bfM +k\bfS_\nu +k\bfL^0_{j+1}){\overline y}_{j+1}\notag\\
 &=(2\bfM -k\bfS_\nu -k\bfL^0_{j}){\overline y}_{j}
  -k(\bfL^1_{j+1} {\overline y}_{j+1}+\bfL^1_j {\overline y}_{j})
  +k(\bfF_{j+1} {\overline y}_{j+1}+\bfF_j {\overline y}_{j}).\notag
\end{align}
Next, we use a linear extrapolation for the unknown
terms in the right hand side, that is, we take~$ f(t_{j})+\bigl(f(t_{j})-f(t_{j-1})\bigr)$ as an approximation of~$f(t_{j+1})$,
which leads us to the implicit-explicit (IMEX)  scheme 
\begin{align}
 &(2\bfM +k\bfS_\nu +k\bfL^0_{j+1}){\overline y}_{j+1}=h_j,\quad\mbox{with}\label{sys-y-D2}\\
 &h_j\coloneqq(2\bfM -k\bfS_\nu -k\bfL^0_j){\overline y}_{j}
  -k(3\bfL^1_{j} {\overline y}_{j}-\bfL^1_{j-1} {\overline y}_{j-1})
  +k(3\bfF_{j} {\overline y}_{j}-\bfF_{j-1} {\overline y}_{j-1}),\notag
\end{align}
which we can solve to obtain~${\overline y}_{j+1}$, provided we know~$({\overline y}_{j-1},{\overline y}_{j})$.
An analogous IMEX time discretization is considered in~\cite{AscherRuuthWetton95}
for convection-diffusion equations, in~\cite{EthierBourgault08}
for the FitzHugh--Nagumo system, in~\cite{HeSun07} and~\cite[sect.~19]{MarionTemam98} for the
Navier--Stokes system, and in~\cite{ZhangJinHuangFu18} for the Burgers equation. 

Note that~${\overline y}_{1}=\overline{y_0}$ is given, at initial
time~$t=t_1=0$, however,
to start the solver/algorithm, in order to obtain~${\overline y}_{2}$ at time~$t=k$, we need the
``ghost'' state~${\overline y}_{0}$ ``at time~$t=t_0=-k$''. We have set/chosen~${\overline y}_{0}=\overline{y_0}$.

\begin{remark}
It is desirable that the matrix~$\bfA_{j+1}\coloneqq2\bfM +k\bfS_\nu +k\bfL^0_{j+1}$ ``to be inverted'' is sparse, symmetric and positive definite. Note/recall that both~$\bfM$ and~$\bfS_\nu$ are sparse, symmetric, and positive definite. Further, the reaction matrix~$\bfL^0_{j+1}$ is sparse and symmetric. Hence~$\bfA_{j+1}\coloneqq2\bfM +k\bfS_\nu +k\bfL^0_{j+1}$ has the desired properties, for small time-step~$k$. On the other hand,
 the feedback matrix~$\bfF_{j+1}$ may be not  a sparse matrix (as, in general, for the Riccati based feedback) and the convection matrix~$\bfL^1_{j+1}$
is not symmetric; these are the reasons why we do not include neither~$\bfF_{j+1}$ nor~$\bfL^1_{j+1}$ in the matrix~$\bfA_{j+1}$.
\end{remark}

\subsection{Solving the algebraic Riccati equation}\label{sS:computRic-alg}
We have seen that we need to solve equations as in~\eqref{Riccati-algTbk}
in order to compute~$\Pi=\Pi(t)$ (defined for time~$t\in[\tau,\tau+\varpi]$), from which we can  construct the 
feedback input operator~$\clK=\clK^{\rm ricc}=-\beta^{-1}B^*\Pi$ making system~\eqref{sys-y-parab-clK} exponentially stable.
This section is dedicated to the
 computation of a finite-elements approximation of equations as~\eqref{Riccati-alg},
  \begin{subequations}\label{Riccati-alg*}
 \begin{align}
 &\fkT_{\clT}(\Pi)=0,\qquad\Pi\succeq0,\quad\mbox{with}\quad
 \clT\coloneqq (\clA,\clB,\clC),\quad\mbox{and}\quad\\
 &\fkT_{\clT}(\Pi)
 \coloneqq \clA^*\Pi+\Pi  \clA-\Pi \clB \clB^{*}\Pi+\clC^{*}\clC.
\end{align}
 \end{subequations}
 
To solve~\eqref{Riccati-alg*} we shall  use a Newton method, as in the 
software/routines available in~\cite{BennerRicSolver}, see~\cite{Benner06}.
As we have mentioned in section~\ref{sS:homotopy},
a crucial point now concerns the choice of the initial guess~$\Pi_G$ to start the Newton iteration.
 and finding such a ``guess'' is a nontrivial task, see the
discussion in~\cite[after Eq.~(1.4)]{KesavanRaymond09},
in~\cite[sect.~3, Rem.~2]{BennerLiPenzl08}, and in~\cite[sect.~5.2]{BennerSaak13-gamm}.

To circumvent this issue we consider the homotopy as in~\eqref{Riccati-alg-s} and
proceed as we illustrate in Algorithm~\ref{Alg:ARE}, where we connect the Riccati data triples
 \[
(-A,\beta^{-\frac12}B,\clC)\quad\mbox{and}\quad  (-A-A_{\rm rc}(\tau+\varpi)+\overline\mu\Id,\beta^{-\frac12}B,\clC).
 \]
 Observe that~$\beta^{-1}BB^*=\beta^{-\frac12}B(\beta^{-\frac12}B)^{*}$.
 
\begin{algorithm}[ht]
 \caption{Homotopy for algebraic Riccati equation~\eqref{Riccati-alg}}
\begin{algorithmic}[1]\label{Alg:ARE}
\REQUIRE{Riccati data~$(A,A_{\rm rc},B,\clC,\beta,\overline\mu)$ and homotopy step~$\delta_s\in(0,1]$.}
\ENSURE{$\Pi_{\tau+\varpi}$, with~$\fkT_{\clT}(\Pi_{\tau+\varpi})=0$, with~$\clT=(-A-A_{\rm rc}+\overline\mu\Id,\beta^{-\frac12}B,\clC)$.}
\STATE  Set~$i=0$;
\STATE  Set~$\Pi^{\rm old}=\zero$;
\WHILE{$i\delta_s\le1$}
\STATE Set~$\widehat\clT=(-A-i\delta_s(A_{\rm rc}-\overline\mu\Id),\beta^{-\frac12}B,\clC)$;
\STATE Solve~$\fkT_{\widehat\clT}(\Pi_{\tau+\varpi})=0$, with initial guess~$\Pi^{\rm old}$;
\STATE  Set~$\Pi_T^{\rm old}=\Pi_{\tau+\varpi}$;
\STATE Shift~$i\to i+1$;
 \ENDWHILE
 \IF{$(i-1)\delta_s<1$}
 \STATE Set~$\widehat\clT=(-A-(A_{\rm rc}-\overline\mu\Id),\beta^{-\frac12}B,\clC)$;
\STATE Solve~$\fkT_{\widehat\clT}(\Pi_{\tau+\varpi})=0$, with initial guess~$\Pi^{\rm old}$.
 \ENDIF
 \end{algorithmic} 
\end{algorithm} 

\begin{remark}\label{R:Ric_initGuess} To see why a stabilizing initial guess is 
important for solving algebraic Riccati equations as~\eqref{Riccati-alg} we can observe the following.
After discretization, we will solve a matrix equation as
\begin{equation}\label{Riccati-alg-mat}
 \bfX^\top\mathbf\Pi+\mathbf\Pi  \bfX-\mathbf\Pi\bfB\bfB^\top\mathbf\Pi+\bfC^\top\bfC=0,\qquad\mathbf\Pi\succeq0,
\end{equation}
and look for~$\mathbf\Pi=\mathbf\Pi^\top$ through a Newton--Kleinman iteration,
see~\cite[Equ.~(18)]{BennerSaak13-gamm},
\begin{align}
&\mathbf\Pi_0=\bfG=\bfG^\top,\quad\bfG\succeq0\notag\\
&\bfX_i= \bfX-\bfB\bfB^\top\mathbf\Pi_i,\quad \bfX_i^\top\mathbf\Pi_{i+1}+\mathbf\Pi_{i+1}\bfX_i+\mathbf\Pi_i\bfB\bfB^\top\mathbf\Pi_i+\bfC^\top\bfC=0,\label{Lyap-iter}
\end{align}
where we look for a nonnegative definite solution~$\mathbf\Pi_{i+1}$ for the Lyapunov equation in~\eqref{Lyap-iter}.
Let us now assume for simplicity that~$B=\Id$, that~$\bfX=\bfX^\top$, and that $\bfX$ has an eigenvalue~$\zeta>0$, $\bfX\bfv=\zeta\bfv$, with~$\bfv\ne0$. Then, if~$0\le\varepsilon<\zeta$ the initial guess~$\mathbf\Pi_0=\bfG=\varepsilon\Id$ is not stabilizing and not appropriate. Indeed, it follows that~$\zeta-\varepsilon>0$ is an eigenvalue of~$\bfX_0=\bfX-\varepsilon\Id$, hence~$\bfX_0=\bfX_0^\top$ is not stable.

Suppose that, with~$i=0$, there exists a nonnegative definite solution~$\mathbf\Pi_1$ for~\eqref{Lyap-iter}, then~$\bfH^\delta\coloneqq-\mathbf\Pi_1-\delta\Id$ is negative definite for all~$\delta>0$, and solves
\[
\bfX_0^\top\bfH^\delta+\bfH^\delta\bfX_0=-2\delta\bfX_0+\varepsilon^2\Id+\bfC^\top\bfC.
\]
Now, we can choose~$\delta>0$ small enough such that
 $-2\delta\bfX_0+\varepsilon^2\Id+\bfC^\top\bfC\succ0$ is positive definite, and by
the result in~\cite[sect.~13.1, Thm.~1(b)]{LancasterTismenetsky85}, we must have that~$\bfX_0$ is stable, which is a contradiction. Therefore, for the initial guess~$\mathbf\Pi_0=\varepsilon\Id$,
there will be no nonnegative definite solution~$\mathbf\Pi_1$ for the first iteration in~\eqref{Lyap-iter}.
\end{remark}

\begin{remark}
In Algorithm~\ref{Alg:ARE}, we propose to find the symmetric positive definite solution of the algebraic equation by solving a sequence of algebraic Riccati equations starting by solving an algebraic Riccati equation for which  finding a stabilizing initial guess is easier, namely, the zero feedback. Note that to compute our feedback control input~$u=\clK^{\rm ricc} y$, we need only the product~$\clK^{\rm ricc}=-B^*\Pi$. One approach to find~$\clK^{\rm ricc}$ directly is to use a  Chandrasekhar iteration as in~\cite[sect.~2]{BanksIto91}. We refer also the reader to the partial stabilization Bernoulli equation based approach in~\cite{Benner11-ch3}.
Computing the solution of the algebraic Riccati equation has, however, the advantage to give us a way to compute an approximation of the optimal cost as~$\tfrac12\overline y_0^\top\mathbf\Pi\overline y_0\approx\tfrac12(\Pi y_0,y_0)_H=\clJ^\beta_{M_1}(y_0;\widehat y,\widehat u)$, hence without solving (say, in a large time interval) the corresponding autonomous feedback control dynamical system issued from the initial state~$y(0)=y_0$. 
\end{remark}

 \subsection{Solving the time-periodic differential Riccati equation}\label{sS:computRic-diff}
 Once we have computed (e.g., with Algorithm~\ref{Alg:ARE}) a solution~$\Pi_{\tau+\varpi}$ for~\eqref{Riccati-alg},
 we can then solve the differential equation~\eqref{Riccati-Tbk},  backwards in time,
 \begin{align}
 &\dot\Pi+ X^*\Pi+\Pi  X-\beta^{-1}\Pi B  B^{*}\Pi+\clC^{*}\clC=0,\quad \Pi(\tau+\varpi)=\Pi_{\tau+\varpi},\label{Riccati-Tbk*}\\
 &\Pi(t)\succeq0,\quad\mbox{for all}\quad t\in[\tau,\tau+\varpi].\notag
\end{align}

Recall that for autonomous systems, where~$X$ is independent of time,
we have that the solution of the differential Riccati
equation~\eqref{Riccati}
is in fact time-independent and coincides with the solution of the algebraic Riccati equation~\eqref{Riccati-alg}. For time-periodic~$X(t)$ with
period~$\varpi>0$, $X(t+\varpi)=X(t)$ for all~$t\ge0$,
then the optimal feedback, which solves of the differential Riccati
equation~\eqref{Riccati}, is also periodic in time with the same
period, $\Pi(t+\varpi)=\Pi(t)$ for all~$t\ge0$.
Let us denote
\begin{align}
 \fkT_{\clT}^{\rm per}(\Pi)
 &\coloneqq\left(
      \fkT_{\clT}^{{\rm per},1}(\Pi),
      \fkT_{\clT}^{{\rm per},2}(\Pi)\right),\qquad\clT\coloneqq (\clA, \clB,\clC),\notag
\intertext{with}
 \fkT_{\clT}^{{\rm per},1}(\Pi)&\coloneqq
      \dot\Pi+  \clA^*\Pi+\Pi \clA-\beta^{-1}\Pi B B^{*}\Pi+\clC^{*}\clC,\qquad t\in[\tau,\tau+\varpi]\notag\\
  \fkT_{\clT}^{{\rm per},2}(\Pi)&\coloneqq\norm{\Pi(\tau)-\Pi(\tau+\varpi)}{\clL(H)}.\notag
 \end{align}
 
 Note that~$\Pi$ solves the periodic Riccati equation if
\begin{equation}
 \fkT_{\clT}^{\rm per}(\Pi)=(\zero,0).\notag
\end{equation}
 
To compute the periodic Riccati solution we followed Algorithm~\ref{Alg:PRE}, which is motivated by Theorem~\ref{T:convRicPer}.  In particular, note that we start at time~$t=\tau+\varpi$ with the matrix~$\mathbf\Pi_{R}=\mathbf\Pi_{\tau+\varpi}$  solving the algebraic Riccati equation~$\fkT_{(\bfX(\tau+\varpi), \bfB,\bfC)}(\mathbf\Pi_{R})=0$.

\begin{algorithm}[ht]
 \caption{Solution for $\varpi$-periodic Riccati feedback~\eqref{Riccati-Tbk}}
\begin{algorithmic}[1]\label{Alg:PRE}
\REQUIRE{$(\clT,\varpi,\varepsilon, \overline n)$, where~$\varpi>0$,
$\varepsilon>0$, $\overline n\in\bbN$, and~$\clT(t)=(X(t),\beta^{-\frac12}B,\clC)$,
with~$X(t)=X(t+\varpi)$ for all~$t\in\bbR$.}
\ENSURE{$\Pi$ solving~\eqref{Riccati-Tbk} with $\Pi(\tau)=\Pi(\tau+\varpi)$.}
\STATE Set $n=0$;
\STATE Use Algorithm~\ref{Alg:ARE} to solve~$\fkT_{(X(\tau+\varpi), \beta^{-\frac12}B,\clC)}(\Pi_R)=0$;
\STATE Solve~$\fkT_{\clT}^{{\rm per},1}(\Pi)=0$, backwards for~$t\in[\tau,\tau+\varpi]$,
with~$\Pi(\tau+\varpi)=\Pi_{R}$;
\STATE Set $P_L=\Pi(\tau)$;
\STATE Set $e=\norm{\Pi_L-\Pi_R}{\clL(H)}$;
\WHILE {$e>\varepsilon$ and $n<\overline n$}
\STATE Set $P_R=P_L$;
\STATE Solve~$\fkT_{\clT}^{{\rm per},1}(\Pi)=0$, backwards for~$t\in[\tau,\tau+\varpi]$,
with~$\Pi(\tau+\varpi)=\Pi_R$;
\STATE Set $P_L=\Pi(\tau)$;
\STATE Set $e=\norm{P_L-P_R}{\clL(H)}$;
\STATE Shift $n=n+1$;
\ENDWHILE
\end{algorithmic} 
\end{algorithm} 

For further works on matrix periodic Riccati equations,
we refer to~\cite{Varga08,GusevJohaKagsShirVarga10}.

\subsection{Spatial discretization of the Riccati equations} \label{sS:discRiccati-space}
 We look for a  symmetric positive semidefinite matrix~$\mathbf\Pi\in\bbR^{N\times N}=\clL(\bbR^{N})\sim\clL(H_N)$, representing the symmetric positive definite linear
 continuous operator~$\Pi\in\clL(H)$ in our piecewise linear finite-elements space~$H_N$,
\[
 \overline z^\perp\bfR \overline y=0\quad\mbox{with}\quad
 (\Pi y,z)_H=\overline z^\perp\mathbf\Pi \overline y,\quad\mbox{for all}\quad (y, z)\in H_N\times H_N,
\]
where~$\bfR$ is as
\[
 \overline z^\perp\bfR \overline y=\langle(\dot\Pi+ X^*\Pi+\Pi  X-\beta^{-1}\Pi B B^{*}\Pi+\clC^{*}\clC) y,z\rangle_{V',V}.
\]
We can see that algebraic computations lead us to the semi-discrete equation
\begin{align}\notag
\bfR=\dot{\mathbf\Pi}+\bfX^\top \mathbf\Pi+\mathbf\Pi \bfX
-\mathbf\Pi\overline\bfB \bfB^\top \mathbf\Pi
+\bfC^\top\bfC=0,\quad t>0,
\end{align}
with~$\bfX=-\bfM^{-1}(\bfS_\nu +\bfL^0+\bfL^1)+\overline\mu\Id$ and where~$\bfB$ and~$\bfC$ satisfy
\begin{subequations}\label{bfBbfC}
\begin{align}
  (\clC^{*}\clC y,z)_H&=\overline z^\top\bfC^\top\bfC \overline y,\\
   \beta^{-1}(B  B^{*}y,z)_H&=\overline z^\top \bfM\bfB\bfB^\top\bfM \overline y,
\end{align}
\end{subequations}
for all~$(y, z)\in H_N\times H_N$. Indeed, the above equation can be obtained by the following observation. 
If $\bfP\in\bbR^{N\times N}$ and~$P\colon H\to H$ satisfy
\[
\overline z^\top\bfP\overline y=(P y,z)_H,\quad\mbox{for all}\quad (y, z)\in H_N\times H_N,
\]
then we can write
\[
\overline z^\top\bfP\overline y=(P y,z)_H
=\overline z^\top\bfM\overline{P}
\overline y,\quad\mbox{with}\quad \overline{P}\coloneqq\bfM^{-1}\bfP.
\]

For a given~$y\in H_N$, we define the vector~$\overline{Py}\in\bbR^{N\times 1}$ as
\[
\overline{Py}\coloneqq\overline{P}\overline y.
\]
Note that~$\overline{Py}$ is the unique vector~$\overline{w}$ satisfying
\[\langle P y, z\rangle_{X',X}\eqqcolon\overline z^\top\bfM \overline{w}\quad\mbox{for all}\quad
z\in H_N.
\]

\begin{example}
If~$P=\Id_H$, we have that~$\bfP=\bfM$ is the mass matrix and~$\overline P=\Id_{\bbR^N}$.
\end{example}

For the composition~$\clC^{*}\clC$ we consider the two cases~$\clC^{*}\clC\in\left\{\Id, P_{\clE^{\rm f}_{M_1}}\right\}$.

For the case~$\clC^{*}\clC=\Id$ we can write
\[
(\clC^{*}\clC y,z)_H=\overline z^\top\bfM\overline y.
\]

For the case~$\clC^{*}\clC=P_{\clE^{\rm f}_{M_1}}$ we can find (cf.~\cite[Lem.~2.8]{KunRod19-cocv})
\begin{equation}\label{orthProj-disc}
\overline{P_{\clE^{\rm f}_{M_1}}}=\overline{\clC^{*}\clC}=\bfE^{\rm f} \bfV^{-1}(\bfE^{\rm f})^\top\bfM,
\end{equation}
 where~$\bfE^{\rm f}\in\bbR^{N\times M_1}$ is the matrix whose columns contain the (vectors corresponding to the) $M_1$~eigenfunctions, $\overline e_i\in\bbR^{N\times 1}$, $1\le i\le M_1$, spanning~$\clE^{\rm f}_{M_1}$
and where~$\bfV\in\bbR^{M_1\times M_1}$ is the matrix
with entries~${\bfV_{M_1}}_{(i,j)}=\overline  e_i^\top\bfM\overline e_j$. Thus, 
we find
\[
(\clC^{*}\clC y,z)_H=\overline z^\top\bfM\overline{\clC^{*}\clC}\overline y=\overline z^\top\bfM\bfE^{\rm f} \bfV^{-1}(\bfE^{\rm f})^\top\bfM\overline y.
\]

Resuming we can take
\begin{subequations}\label{C-disc}
\begin{align}
&\bfC=\bfM_\bfc,&&\quad\mbox{if}\quad\clC^{*}\clC=\Id;\\
&\bfC=(\bfV^{-1})_{\bfc}(\bfE^{\rm f})^\top\bfM,&&\quad\mbox{if}\quad\clC^{*}\clC=P_{\clE^{\rm f}_{M_1}};
\end{align}
\end{subequations}
where, the subscript~$\bfc$ stands for the Cholesky factor of a given symmetric positive definite matrix~$Z$, satisfying
\begin{equation}\label{chol}
Z=Z_\bfc^\top Z_\bfc.
\end{equation}
 
Next for the term involving the control operator,  with~$\clB=\beta^{-\frac{1}{2}}B$ we find
\begin{align}
(\Pi \clB  \clB^{*}\Pi y,z)_H&=\overline z^\top\bfM\overline{\Pi\clB  \clB^{*}\Pi y}=\overline z^\top\bfM\overline{\Pi}\,\overline{\clB  \clB^{*}\Pi y}
=\overline z^\top\mathbf\Pi\overline{\clB\clB^{*} }\,\overline{\Pi y}\notag\\
&=\overline z^\top\mathbf\Pi\overline{\clB\clB^{*} }\,\bfM^{-1}\mathbf\Pi\overline y.\notag
\end{align}
Hence, if we find~$\bfB$ as in~\eqref{bfBbfC}, we can write~$\overline{\clB\clB^{*} }=\bfM^{-1} (\bfM\bfB\bfB^\top\bfM)$ and
\begin{align}
(\Pi \clB  \clB^{*}\Pi y,z)_H&\approx\overline z^\top\mathbf\Pi\bfB\bfB^\top\mathbf\Pi\overline y.\notag
\end{align}
It remains to find a suitable operator~$\bfB$ as in~\eqref{bfBbfC}. For this purpose, we note that
\[
 (Bu,z)_H= ({\textstyle\sum_{j=1}^{M_0}}u_j\Phi_j,z)_H =(u,v^z)_{\bbR^{M_0}},
\]
where~$v^z=(v_1,\dots,v_{M_0})$ is the vector with coordinates~$v^z_j\coloneqq(\Phi_j,z)_H$. Hence
\[
B^*z=v^z\quad\mbox{and}\quad \beta^{-1}(B  B^{*}y,z)_H=(\beta^{-\frac12}B^*y, \beta^{-\frac12}B^{*}z)_{\bbR^{M_0}},
\]
which leads us to
\[
 \beta^{-1}(B  B^{*}y,z)_H\approx (\bfB^\top\bfM \overline z)^\top \bfB^\top\bfM \overline y=\overline z^\top\bfM\bfB\bfB^\top\bfM \overline y,
\]
where~$\bfB=\beta^{-\frac12}\bfU\in\bbR^{N\times M_0}$
and~$\bfU$ is the matrix the columns of which contain the finite-elements vectors corresponding to the actuators, as in~\eqref{bfU}.

\subsection{Solving the differential Riccati equations. Time discretization} \label{sS:discRiccati-time}
Here we restrict ourselves to the case case ~$\clC^*\clC=\Id$, hence~$\bfC^\top\bfC=\bfM$; see~\eqref{C-disc}. To solve~\eqref{Riccati-algTbk} we shall first solve the algebraic matrix  equation
\begin{subequations}\label{DiscRicc}
\begin{align}
&\bfX^\top({\tau+\varpi}) \mathbf\Pi+\mathbf\Pi \bfX({\tau+\varpi})
-\mathbf\Pi\bfB \bfB^\top \mathbf\Pi
+\bfC^\top\bfC=0,\label{DiscRicc-alg}
\intertext{and then solve, backwars in time, the differential  matrix  equation}
&\dot{\mathbf\Pi}+\bfX^\top \mathbf\Pi+\mathbf\Pi \bfX
-\mathbf\Pi\bfB \bfB^\top \mathbf\Pi
+\bfC^\top\bfC=0,,\qquad\mathbf\Pi(\tau+\varpi)=\mathbf\Pi_{\tau+\varpi},\label{DiscRicc-diff}
\intertext{for time~$t\in[\tau,\tau+\varpi]$, with}
&\bfX(t)=-\bfM^{-1}(\bfS_\nu +\bfL^0(t)+\bfL^1(t))+\overline\mu\Id,\quad 
\bfB\in\bbR^{N\times M_0},\quad
\bfC\in\bbR^{N\times N},
\intertext{where}
&\bfB=\beta^{-\frac12}\bfU \quad\mbox{and}\quad\bfC=\bfM_\rmc,\quad\mbox{with}\quad\bfU =[\overline\Phi_{1}\;\dots\;\overline\Phi_{M_0}].
\end{align}
\end{subequations}
 
 To solve such differential equation, we set a positive time step
\begin{equation}\label{k_ric}
k_{\rm ric}\le\varpi.
\end{equation}
Then, we set the integer~$\lfloor\frac{\varpi}{k_{\rm ric}}\rfloor\ge1$, that is, the integer floor of~$\frac{\varpi}{k_{\rm ric}}$ defined as
\[
\lfloor s\rfloor\in\bbN,\qquad \lfloor s\rfloor\le s<\lfloor s\rfloor+1,\qquad\mbox{for}\quad s\in[0,+\infty),
\]
and a new time step~$\overline k_{\rm ric}$ as
\begin{equation}\label{bark_ric}
k_{\rm ric}\le\overline k_{\rm ric}\coloneqq \frac{\varpi}{\lfloor\frac{\varpi}{k_{\rm ric}}\rfloor}\le\varpi,
\end{equation}
 obtaining the temporal mesh
\begin{equation}\label{tmesh-ricc}
\tau=\overline t_1<\overline t_2<\dots <\overline t_{\#\overline t-1}<\overline t_{\#\overline t}=\tau+\varpi,\qquad \overline t_r=\tau+(r-1)\overline k_{\rm ric},
\end{equation}
where~$\#\overline t=\lfloor\frac{\varpi}{k_{\rm ric}}\rfloor+1$ is the number of elements in~$\overline t$.

 Inspired in a Crank--Nicolson scheme, we set
\begin{align}
 &\dot{\mathbf\Pi}(\tfrac{\overline t_r+\overline t_{r+1}}{2})\approx \tfrac{\mathbf\Pi^{r+1}-\mathbf\Pi^{r}}{\overline k_{\rm ric}},\qquad
 {\fkR(\mathbf\Pi)}(\tfrac{\overline t_r+\overline t_{r+1}}{2})\approx \tfrac{{\fkR(\mathbf\Pi)}(\overline t_{r+1}) +{\fkR(\mathbf\Pi)}(\overline t_r)}{2},\notag
 \intertext{with~$1\le r< \#\overline t-1$,}
 &\mathbf\Pi^r\coloneqq\mathbf\Pi(\overline t_r),\quad \mathbf\Pi^{r+1}\coloneqq\mathbf\Pi(\overline t_{r+1}),\quad\mbox{and}\notag\\
 &\fkR(\mathbf\Pi)(t)\coloneqq\bfX(t)^\top \mathbf\Pi(t)+\mathbf\Pi(t) \bfX(t)
-\mathbf\Pi(t)\bfB \bfB^\top \mathbf\Pi(t)
+\bfC^\top\bfC,\notag
\end{align}
 we find that~$2\tfrac{\mathbf\Pi^{r+1}-\mathbf\Pi^{r}}{\overline k_{\rm ric}}
 +\fkR(\mathbf\Pi)(t_{r+1})+\fkR(\mathbf\Pi)(t_{r})=0$,
 that is,
 \begin{subequations}\label{Ric-discj}
  \begin{align}
&\bfY^\top_r \mathbf\Pi^{r}+\mathbf\Pi^{r} \bfY_r
-\mathbf\Pi^{r}\bfB \bfB^\top \mathbf\Pi^{r}+\overline\bfC^\top_{r+1}\overline\bfC_{r+1}=0\label{Ric-discj-sch}
\intertext{with}
&\bfY(t)\coloneqq \bfX(t)-\tfrac{1}{\overline k_{\rm ric}}\Id=-\bfM^{-1}\left(\bfS_\nu 
+\bfL^0(t)+\bfL^1(t)+(\tfrac{1}{\overline k_{\rm ric}}-\overline\mu)\bfM\right),\notag\\
&\bfY_r\coloneqq\bfY(\overline t_r),\quad\mbox{and}\\
&\overline\bfC_{r+1}^\top\overline\bfC_{r+1}\coloneqq \bfQ_{r+1}\coloneqq
\fkR(\mathbf\Pi)(\overline t_{r+1})+2\bfC^\top\bfC+\tfrac2{\overline k_{\rm ric}}\mathbf\Pi^{r+1}.\label{Ric-discj-Q}
\end{align}

Since~$\bfC^\top\bfC=\bfM$ is positive definite, then~$\mathbf\Pi^{r+1}$ (defining the optimal cost to go) is positive definite. Hence, for small enough~$k_{\rm ric}$ the matrix~$\bfQ_{r+1}$ is symmetric and positive definite as well. Then we can choose~$\overline\bfC_{r+1}=\bfQ_{\rmc}$ as the Cholesky
factor of~$\bfQ_{r+1}=\bfQ_{\rmc}^\top\bfQ_{\rmc}$.
Finally, to find~$\mathbf\Pi^{r}$ we solve~\eqref{Ric-discj-sch}, where as starting point
for the Newton iteration
we will choose the natural guess
\begin{equation}
 \mathbf\Pi^r_0\coloneqq \mathbf\Pi^{r+1}.
\end{equation}
 \end{subequations}

In general, it may be hard to tell whether the chosen time step~$k_{\rm ric}$ will be 
small enough at each discrete time step (so that~$\bfQ_{r+1}$ is  positive definite). Thus, we used the scheme above with a dynamic time-step as in Algorithm~\ref{Alg:DRE}, leading us to a (possibly) nonuniform time mesh~$\overline t^{\rm m}$,
\begin{subequations}\label{F_Ric-disc}
\begin{equation}
\tau=\overline t^{\rm m}_1<\dots<\overline t^{\rm m}_{r-1}<\overline t^{\rm m}_{r}<\dots<\overline t^{\rm m}_{\#\overline t^{\rm m}}=\tau+\varpi
\end{equation}
and to the discretization~$\bfF(\overline t^{\rm m}_r)$, at time~$\overline t^{\rm m}_r$, ~$1\le r\le\#\overline t^{\rm m}$, of the input Riccati
operator~$-\beta^{-1}B  B^*\Pi(\overline t^{\rm m}_r)$, we need in our scheme~\eqref{sys-y-D2}, as
\begin{equation}
\bfF(\overline t^{\rm m}_r)=\bfM\bfU\bfK(\overline t^{\rm m}_r),\quad\mbox{with}\quad\bfK(\overline t^{\rm m}_r)\coloneqq-\beta^{-1}\bfU^\top\mathbf\Pi(\overline t^{\rm m}_r)\in \bbR^{ M_0\times N}.
\end{equation}
\end{subequations}

\begin{algorithm}[ht]
 \caption{Solution for  differential Riccati feedback~\eqref{Ric-discj}}
\begin{algorithmic}[1]\label{Alg:DRE}
\REQUIRE{$(\bfX(t),\bfB,\bfC,\mathbf\Pi_{\tau+\varpi},\tau,\varpi,k_{\rm ric})$, $\tau\le t\le\tau+\varpi$, $\tau>0$,  $0<k_{\rm ric}\le\varpi$,
 and positive definite product~$\bfC^\top\bfC$.}
\ENSURE{a positive definite $\mathbf\Pi(\overline t^{\rm m})$ solving~\eqref{Ric-discj} with $\Pi(\tau+\varpi)=\mathbf\Pi_{\tau+\varpi}$, in a temporal mesh $\overline t^{\rm m}$ of~$[\tau,\tau+\varpi]$.}
\STATE Set $n=0$ and $\overline k_{\rm ric}$ as in~\eqref{bark_ric};
\STATE Set $\overline t^{\rm m}=[\tau+\varpi]\in\bbR^{1\times 1}$ and $T=\tau+\varpi$;
\STATE Set $\mathbf\Pi(\tau+\varpi)=\mathbf\Pi_{\tau+\varpi}$ and ~$\mathbf\Pi_{\rm old}=\mathbf\Pi_{\tau+\varpi}$;
\WHILE {$T>\tau$}
\STATE Set $k=\overline k_{\rm ric}$ and $p=0$;
\WHILE {$p=0$}
\STATE Set $\bfQ=
\fkR(\mathbf\Pi_{\rm old})+2\bfC^\top\bfC+\tfrac2{k}\mathbf\Pi_{\rm old}$;
\IF{$\bfQ$ is positive definite}
\STATE Set $p=1$ and $\overline \bfC=\bfQ_{\bfc}$;
\ELSE
\STATE Set $k=\min\{\tfrac12k, T-\tau\}$ and $T=T-k$;\label{Alg:DRE:squeze-kric}
\ENDIF
\ENDWHILE
\STATE Set~$\bfY\coloneqq \bfX(T)-\tfrac{1}{k}\Id$;
\STATE Solve~$\bfY^\top \mathbf\Pi_{\rm new}+\mathbf\Pi_{\rm new} \bfY
-\mathbf\Pi_{\rm new}\bfB \bfB^\top \mathbf\Pi_{\rm new}+\overline\bfC^\top\overline\bfC=0$;
\STATE Concatenate~$\overline t^{\rm m}=[T\;\; \overline t^{\rm m}]$ and set $\mathbf\Pi(T)=\mathbf\Pi_{\rm new}$;
\STATE Set~$\mathbf\Pi_{\rm old}=\mathbf\Pi_{\rm new}$;
\ENDWHILE
\end{algorithmic} 
\end{algorithm}

\subsection{Computation of the Riccati input control} \label{sS:comput-u-ricc}
Suppose that we have computed the Riccati solution~$\Pi$ for a given spatial triangulation/mesh and for a given temporal discretization. Now, we show how we can compute the input control coordinates~$u$ for simulations of the evolution of our controlled system performed in refinements of such triangulation and for a possibly different temporal discretization.

\subsubsection{Using the Riccati feedback in finer spatial discretizations}
The computation time increases with the number of degrees of freedom.
In order to speed the computations up, an option could be to compute the input Riccati operator~\eqref{F_Ric-disc}
in a given initial spatial mesh, and then use it to construct a corresponding feedback for
refinements of that mesh. If the initial mesh is not too coarse, such construction will
give us a stabilizing feedback for
the refined meshes as well, as illustrated in simulations presented hereafter. 

Let~$\widehat p$ be the set of~$\widehat N$ points of a given  mesh, which is refined to obtain a new mesh
with
points~$p=\widehat p\cup \breve p$, where~$\breve p$ is a set of additional~$\breve N$ points.
Let
\[
\widehat\bfF(\overline t^{\rm m}_r)=\widehat \bfM\widehat \bfU\widehat \bfK(\overline t^{\rm m}_r)\in\bbR^{ N\times  N}
\]
 be as in~\eqref{F_Ric-disc} for the coarse initial mesh.
We assume that the points~$\widehat p$ of the coarse mesh
correspond to the first coordinates in the refined mesh, and that the order of the points~$\widehat p$ is unchanged.
Note that the operator~$\widehat\bfK(\overline t^{\rm m}_r)$ gives us the actuator tuning parameters
$u_i=u_i(t)\in\bbR$, $1\le i\le M_0$, for the feedback control~$Bu$ in the coarse mesh; see~\eqref{B_Phi}.
Now, we simply propose to use these parameters in refined meshes, with~$N=\widehat N+\breve N$ points, by using the discrete feedback input control
\begin{equation}\label{discu-ricc0}
\bfu^{\rm ricc}(\overline t^{\rm m}_r)\coloneqq \widehat \bfK(\overline t^{\rm m}_r)\Xi_{\widehat N}^{N}\overline y^{\rm ricc}(\overline t^{\rm m}_r)\in\bbR^{ M_0\times  1},\qquad\mbox{for~\eqref{uy-ricc}},
\end{equation}
where~$\Xi_{\widehat N}^{N}$ is the matrix projection/mapping collecting the coordinates of~$\overline y$ corresponding to the points in the coarse mesh as follows,
\[
\Xi_{\widehat N}^{N}\coloneqq\begin{bmatrix}\Id_{\widehat N\times \widehat N}& & \zero_{\widehat N\times \breve N}\end{bmatrix}\in\bbR^{\widehat N\times N},\qquad\Xi_{\widehat N}^{N}\colon\overline y
=\begin{bmatrix}\overline y(1,1)\\ \overline y(2,1)\\ \vdots\\ \overline y(N,1)\end{bmatrix}
\mapsto\begin{bmatrix}\overline y(1,1)\\ \overline y(2,1)\\ \vdots\\ \overline y(\widehat N,1)\end{bmatrix}.
\]

\subsubsection{Changing the temporal discretization}
Assume we have computed the Riccati feedback input operator~$\bfK(\overline t^{\rm m}_r)$ for a given
temporal discretization, 
\begin{equation}\notag
0\le\tau=\overline t_1^\rmm<\dots<\overline t_{r}^\rmm<\dots<\overline t_{\#\overline t^\rmm}^\rmm=\tau+\varpi,\qquad \#\overline t^\rmm\ge2,
\end{equation}
of the time interval~$[\tau,\tau+\varpi]$ (cf.~Algorithm~\ref{Alg:DRE} and~\eqref{F_Ric-disc}). We  can still perform
simulations for a different temporal discretization of~$[0,T]$, $T>0$. For this purpose, we proceed as follows. 
Let~$t_j^\rmm\in[0,T]$ be a discrete time in a temporal mesh~$t^\rmm$ as
\[
0= t_1^\rmm< t_2^\rmm<\dots< t_{\#t^\rmm-1}^\rmm< t_{\#t^\rmm}^\rmm=T.
\]
Then at time~$ t_j^\rmm$, $1\le j\le \#t^\rmm$, we use a convex combination (linear interpolation) based on the Riccati temporal mesh as follows,
\begin{subequations}\label{discu-ricc}
\begin{align}
&\bfu^{\rm ricc}(t_j^\rmm)= \left((1-\theta)\widehat\bfK(\overline t^{\rm m}_{r_j})+\theta\widehat\bfK(\overline t^{\rm m}_{r_j+1})\right)\Xi_{\widehat N}^{N}\overline y^{\rm ricc}(t_j^\rmm)\in\bbR^{ M_0\times  1},\\
 &\mbox{for~\eqref{uy-ricc}}.\hspace{1em}\mbox{With}\quad
 \theta=\tfrac{ t_j^\rmm -\varpi\bigl\lfloor\tfrac{ t_j^\rmm-\tau}{\varpi}\bigr\rfloor
 -\overline t_{r_j}^\rmm}{\overline t_{r_j+1}^\rmm-\overline t_{r_j}^\rmm}\quad\mbox{and}\quad r_j\in\bbN_+\quad\mbox{such that}\quad\label{discu-ricc-theta}\\
 &\hspace{0em}r_j\coloneqq\min\left\{r_{j0}\mid1\le r_{j0}\le\#\overline t^\rmm-1\mbox{ and } \overline t_{r_{j0}}^\rmm\le  t_{j0}^\rmm-\varpi\bigl\lfloor\tfrac{ t_{j0}^\rmm-\tau}
 {\varpi}\bigr\rfloor\le \overline t_{r_{j0}+1}^\rmm\right\}.\label{discu-ricc-rj}
\end{align}
 \end{subequations}
 
Note that, by $\varpi$-periodicity we have that~$\bfK^e(t_j^\rmm)=\bfK^e( t_j^\rmm-\varpi\bigl\lfloor\tfrac{t_j^\rmm+\tau} {\varpi}\bigr\rfloor)$ and from~$t_j^\rmm-\varpi\bigl\lfloor\tfrac{t_j^\rmm-\tau}
 {\varpi}\bigr\rfloor\in[\tau,\tau+\varpi]$ it follows that
there exists one, and only one, $\overline t_{r_j}^\rmm$ in the Riccati temporal discretization
satisfying~\eqref{discu-ricc-rj}. In particular,~$\theta\in[0,1]$.

\subsection{Computation of the oblique projection input control}\label{sS:comput-u-obli}
Here we address the discrete version of the explicit feedback in~\eqref{FeedKunRod}.
Essentially, what remains is the construction of the oblique projection~$P_{\clU_M}^{\clE_M^\perp}$, which is analogous to that
of an orthogonal projection as in~\eqref{orthProj-disc}, and reads
\[
 P_{\clU_M}^{\clE_M^\perp}y\approx\bfU \widetilde\bfV^{-1}\bfE^\top\bfM\overline y,
\]
where, together with the matrix~$\bfU\in\bbR^{N\times M_0}$, whose columns contain our~$M_0=M_\sigma$ (vector)
actuators~$\overline \Phi_{j}=\overline\indf_{\omega_j}\in\bbR^{N\times 1}$, we consider also the matrix~$\bfE\in\bbR^{N\times M_0}$,
whose columns contain our~$M_0$ (vector)
auxiliary eigenfunctions~$\overline{e}_{M,i}\in\bbR^{N\times 1}$.
Now,~$\widetilde\bfV\in\bbR^{M_0\times M_0}$ stands for the matrix
whose entries are~${\widetilde\bfV}_{(i,j)}=\overline{e}_{M,i}^\top\bfM\overline\Phi_{j}$. For more details,
see~\cite[sect.~8]{RodSturm20}.
The discretized
feedback control input reads,
at time~$t_j^\rmm$, 
\begin{align}\label{discu-obli}
 &\bfu^{\rm obli}(t_j^\rmm)= \widetilde\bfV^{-1}\bfE^\top
 \Bigl(\bfS_\nu +\bfL^0(t_j^\rmm)+\bfL^1(t_j^\rmm)-\lambda\bfM\Bigr)\overline y(t_j^\rmm)\in\bbR^{ M_0\times  1},\\
 &\mbox{for~\eqref{uy-obli}}.\notag
 \end{align}

\subsection{Short comparison}\label{sS:Ric-vs-OP}
The computation of the feedback input as in~\eqref{discu-obli}  only requires
the computation of the inverse of the  matrix~$\widetilde\bfV\in\bbR^{M_0\times M_0}$,
whose size depends only the number of actuators, thus the numerical time needed to compute such inversion is independent of number~$N$ of spatial
mesh points. This is a computational advantage when compared to Riccati feedbacks as~\eqref{discu-ricc},
which require more time as~$N$ increases.
Further, we do not need to compute and save, offline prior to solve the dynamical system (parabolic equation),  the array feedback in~\eqref{F_Ric-disc}, $\bfK\in\bbR^{M_0\times \widehat N\times m_0}$,
containing the input feedback operator for each time~$\overline t_j$ in the discrete temporal mesh.  Indeed,~\eqref{discu-obli} can be simply computed, online, at each time~$t_j$, while solving the dynamical system. 
\black

On the other side the stabilization property of the explicit feedback in~\eqref{FeedKunRod}
is more sensitive to the number and placement
of the actuators in concrete examples. For suitable actuator placements, the Riccati feedback may succeed to stabilize the system, when the explicit feedback fails to.

\section{Stabilizing performance and set of actuators}\label{S:LocAct}
The placement of the actuators is a crucial point, in particular, for the explicit 
feedback~\eqref{FeedKunRod}. In order to make a comparison with the
Riccati feedbacks, we place the  indicator functions actuators~$\indf_{\omega_j}$ as illustrated in
Figure~\ref{fig:Mesh_coarse} for the cases of~$M_0=M_\sigma\in\{1,4,9\}$ actuators. Namely, we take actuators with rectangular supports~${\omega_j}$ as Cartesian products of the 1D actuators supports in~\eqref{loc-actOP1D} (cf.~\cite[sect.~4.8.1]{KunRod19-cocv}),
where we also show the coarsest
mesh used for the computations.
\begin{figure}[ht]
\centering
\subfigure
{\includegraphics[width=0.32\textwidth]{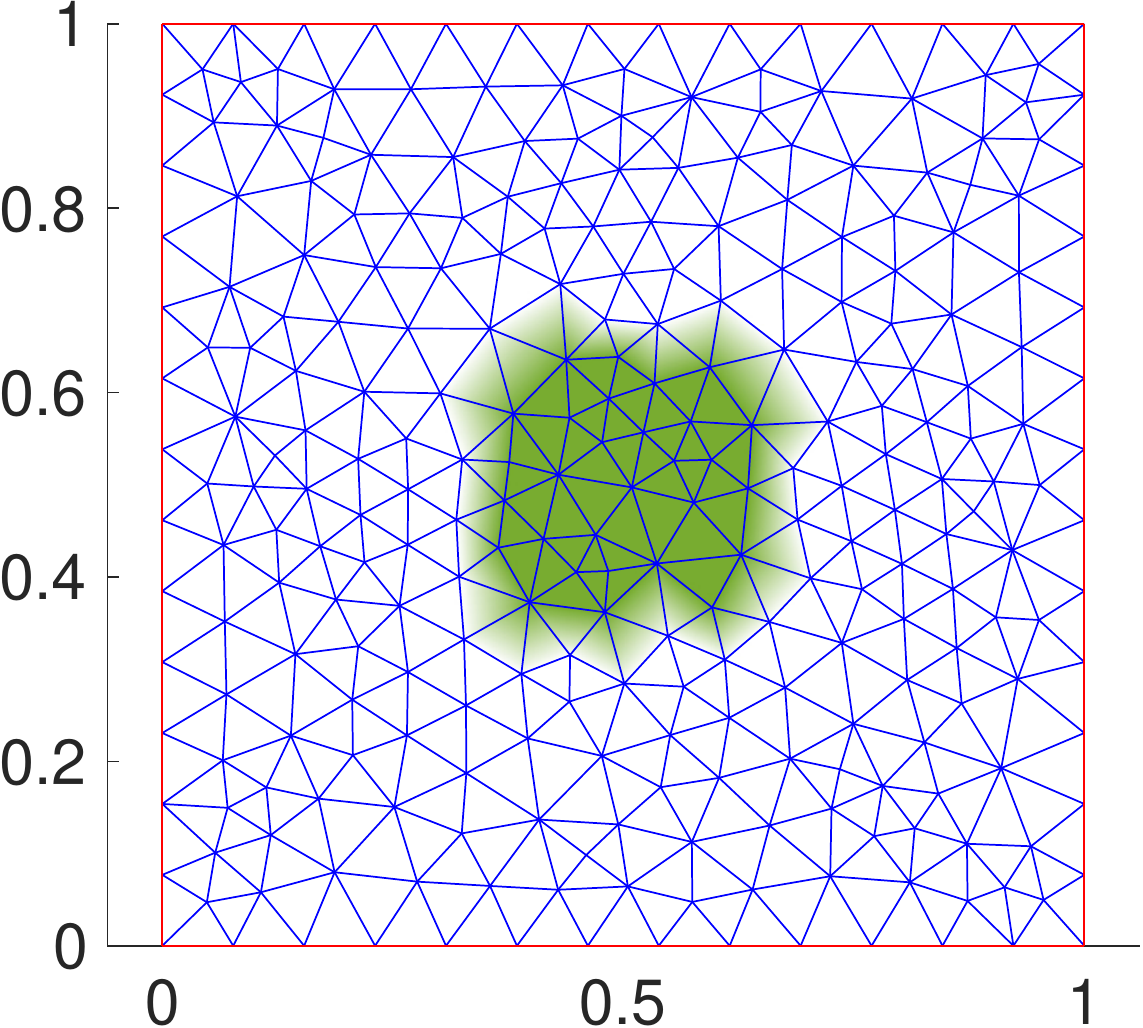}}
\,
\subfigure
{\includegraphics[width=0.32\textwidth]{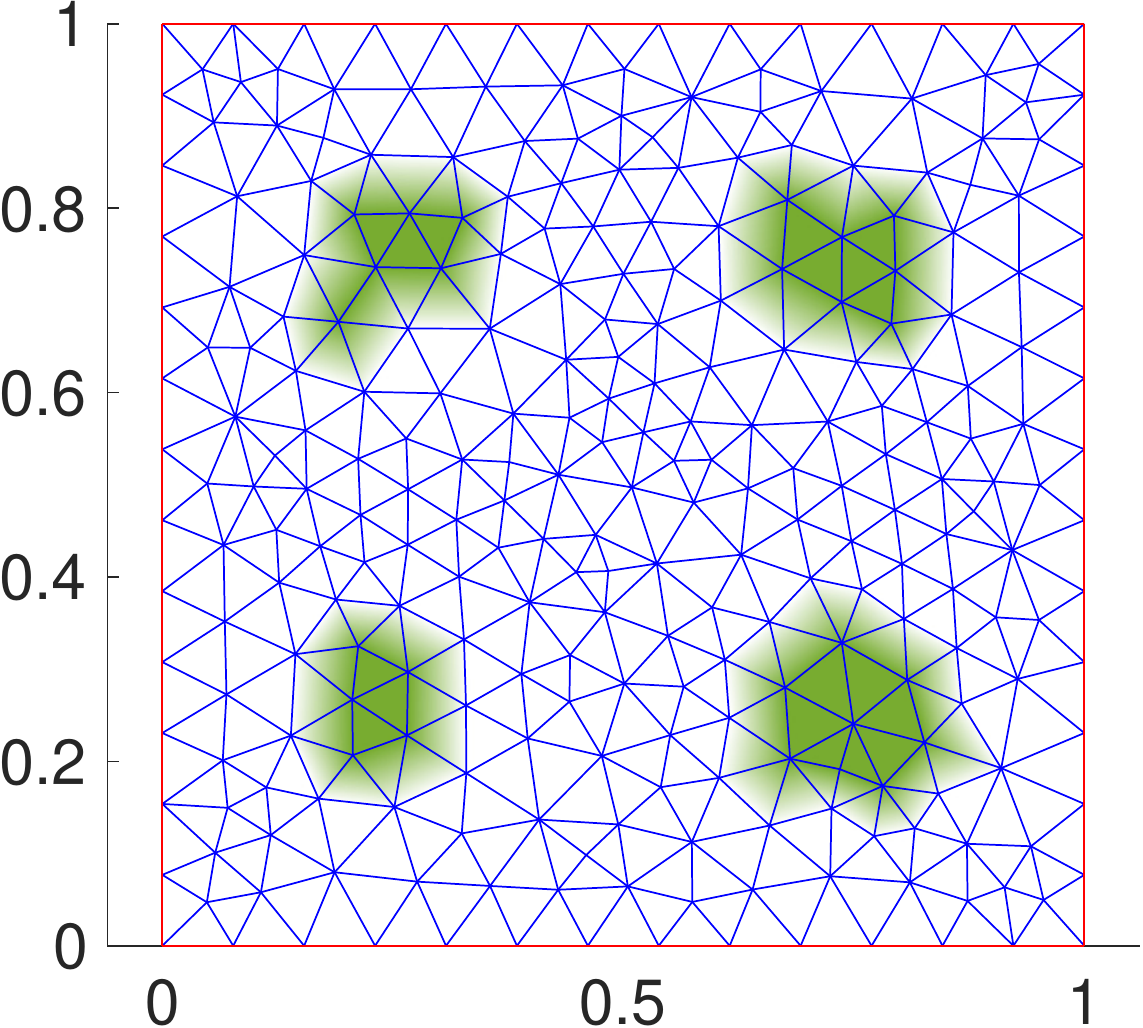}}
\,
\subfigure
{\includegraphics[width=0.32\textwidth]{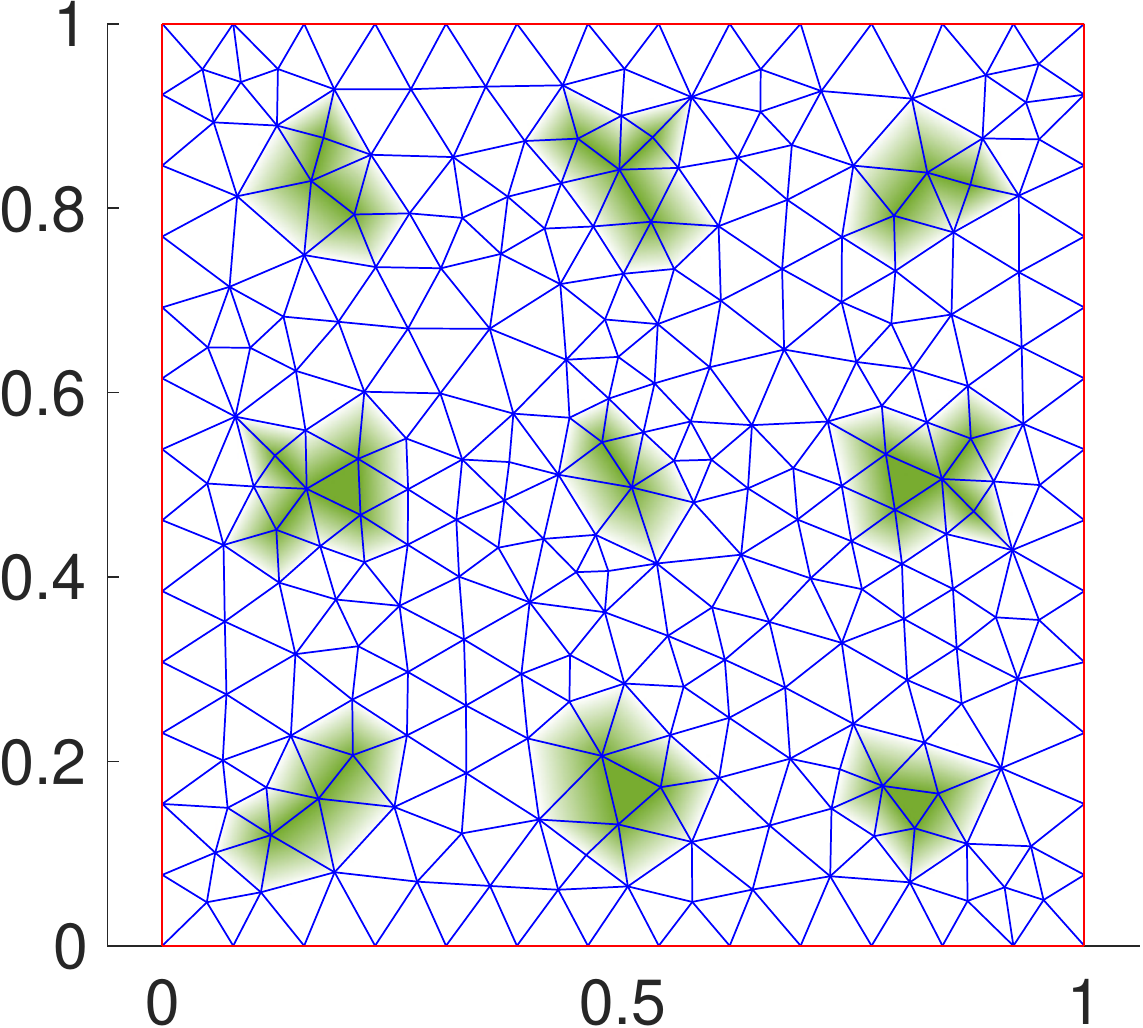}}
\caption{Initial spatial meshes ($\rho=0$) and supports~$\omega_i$ of actuators~$\indf_{\omega_i}$.}
\label{fig:Mesh_coarse}
\end{figure}
Since the mesh is unstructured and  coarse the supports do not look like the rectangular supports we have the continuous level. We shall perform simulations is refinements of such mesh where the supports look more like those rectangular subdomains as we increase the number~$\rho$ of refinements, as we can see in Fig.~\ref{fig:Mesh_refs}. The coarsest triangulation~$\clT^0$ corresponds to~$\rho=0$, the refined triangulation~$\clT^\rho$ for~$\rho\in\{1,2,3\}$ is obtained by dividing each triangle~$\clT^{\rho-1}_k$ of the triangulation~$\clT^{\rho-1}$ into~$4$ congruent triangles by connecting the middle points of the edges of~$\clT^{\rho-1}_k$ (regular refinement).

\begin{figure}[ht]
\centering
\subfigure
{\includegraphics[width=0.32\textwidth]{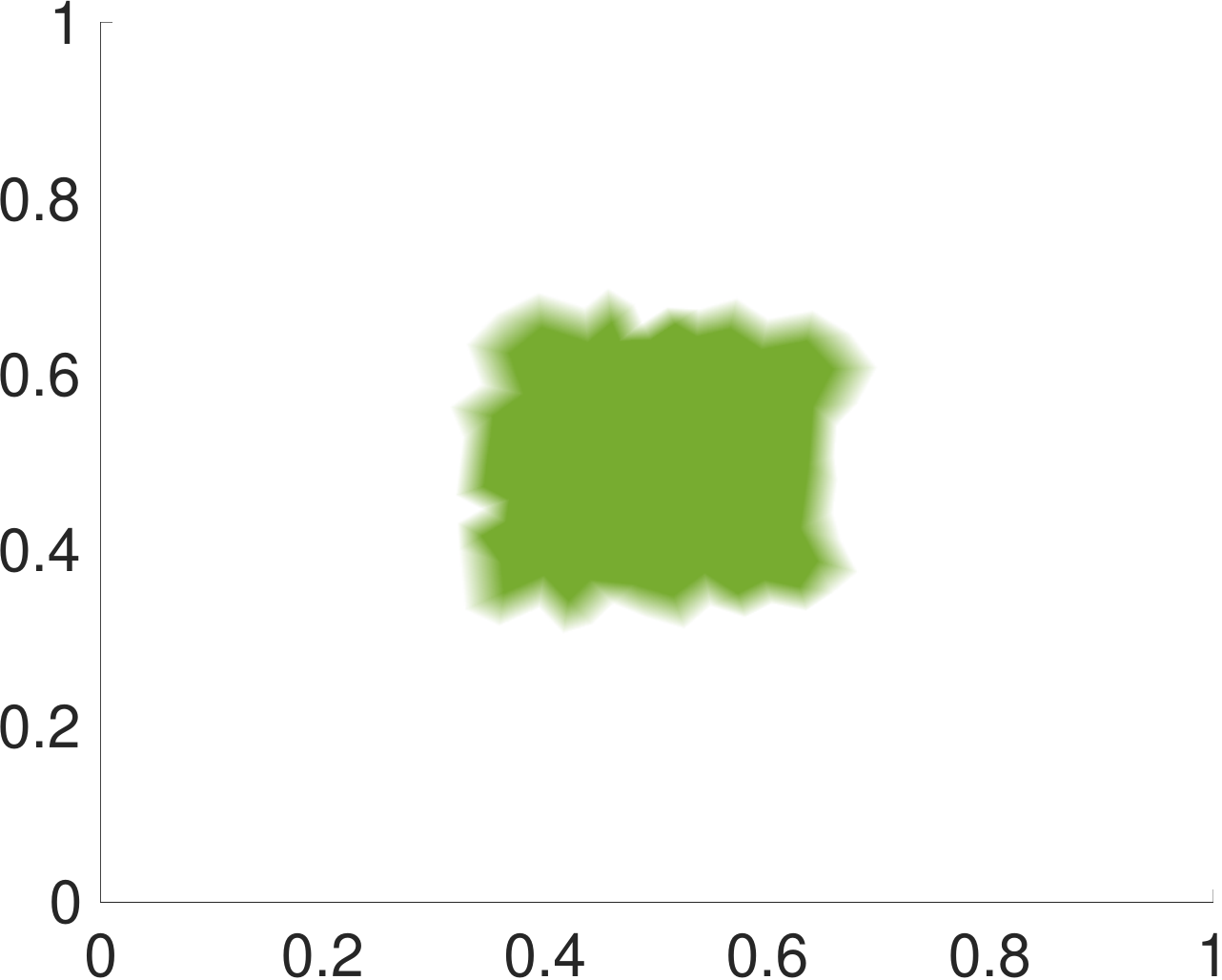}}
\,
\subfigure
{\includegraphics[width=0.32\textwidth]{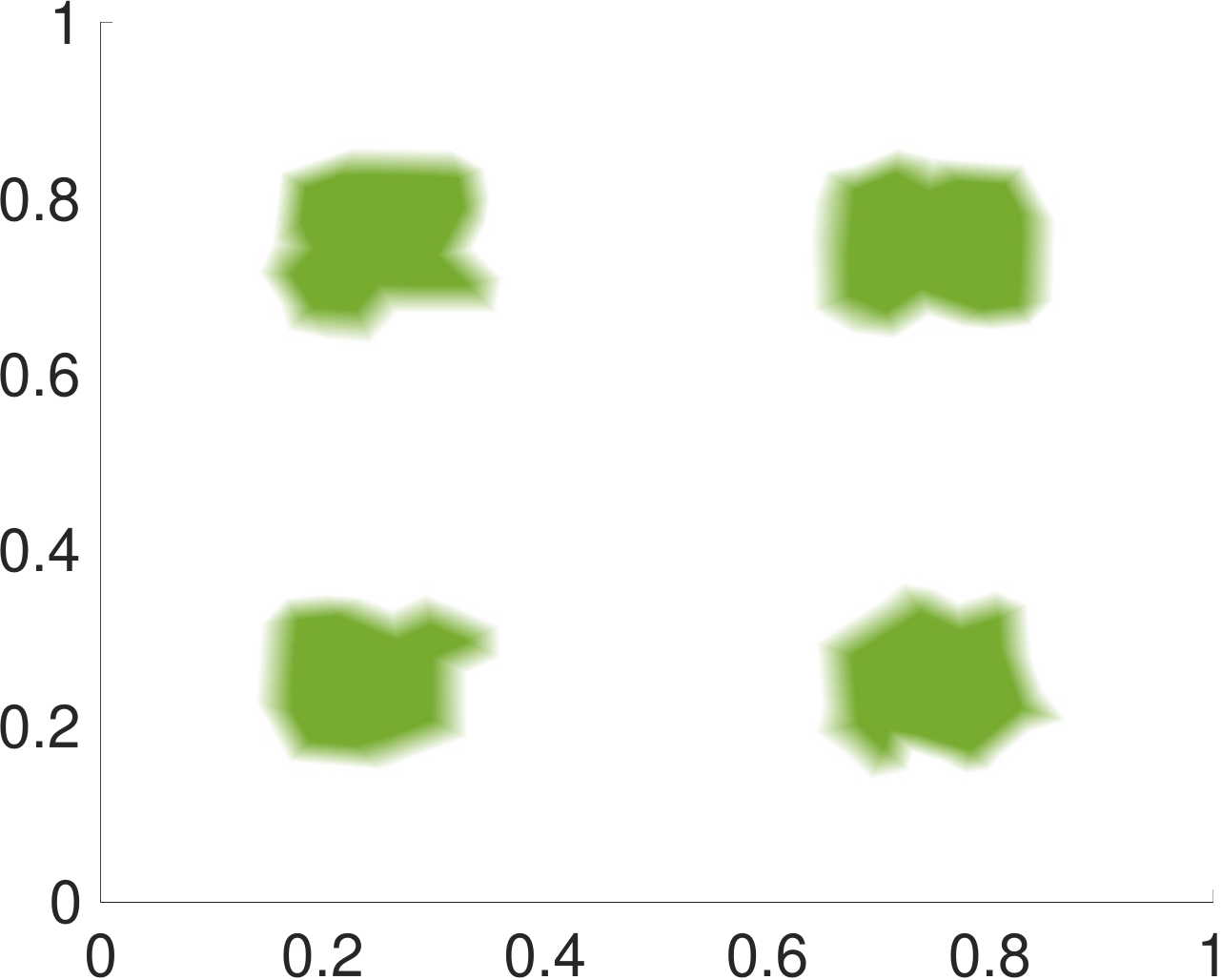}}
\,
\subfigure
{\includegraphics[width=0.32\textwidth]{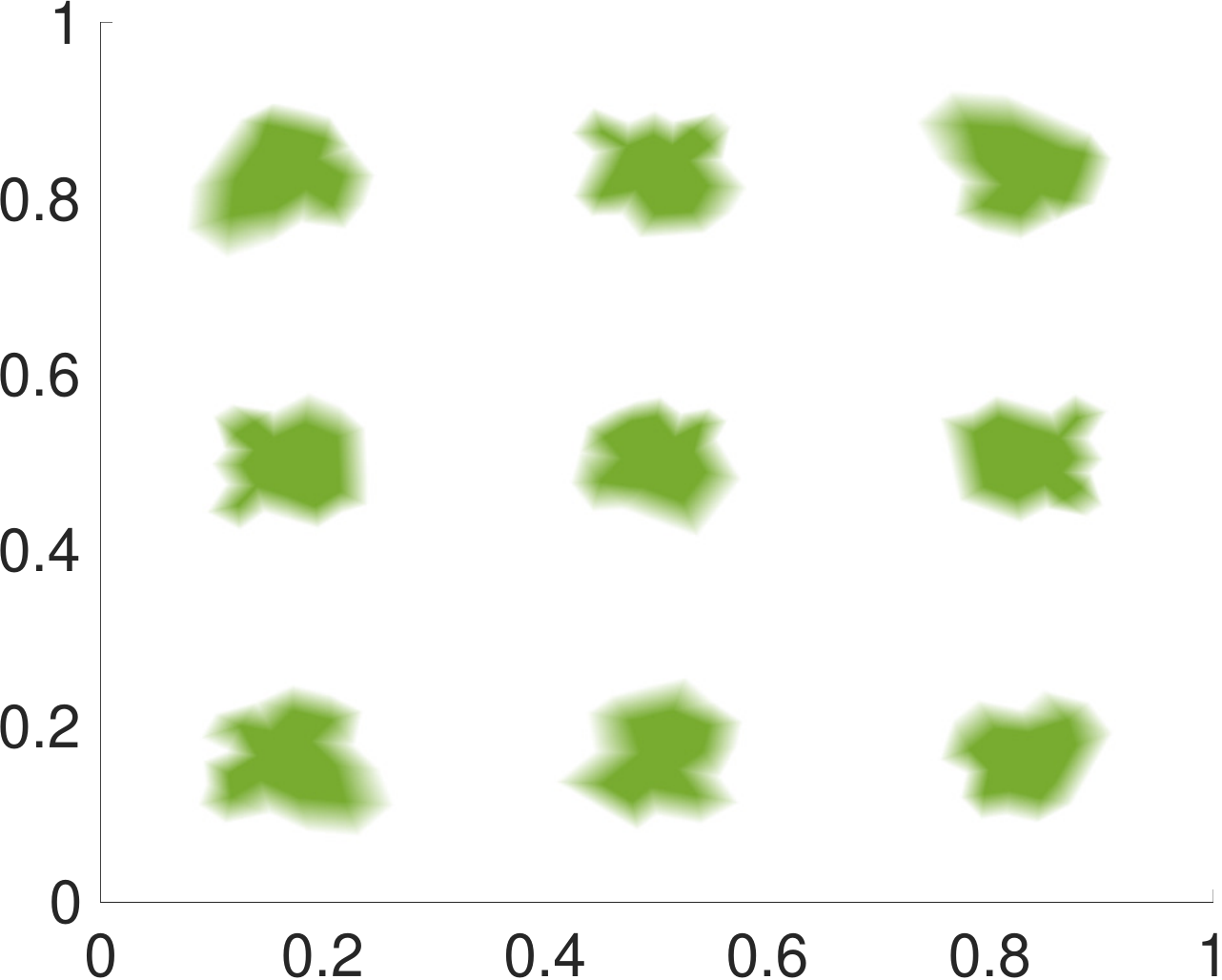}}
\\
\subfigure
{\includegraphics[width=0.32\textwidth]{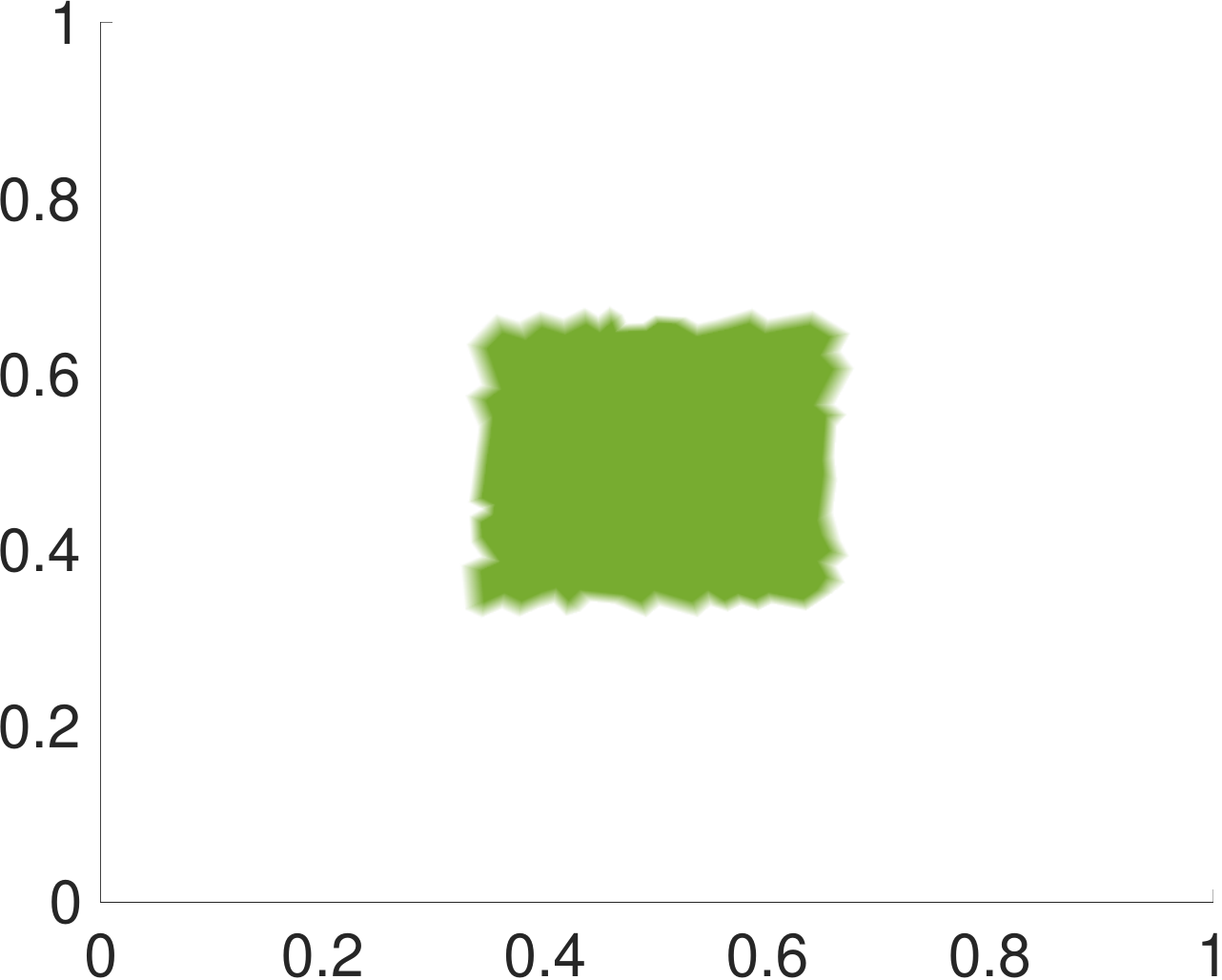}}
\,
\subfigure
{\includegraphics[width=0.32\textwidth]{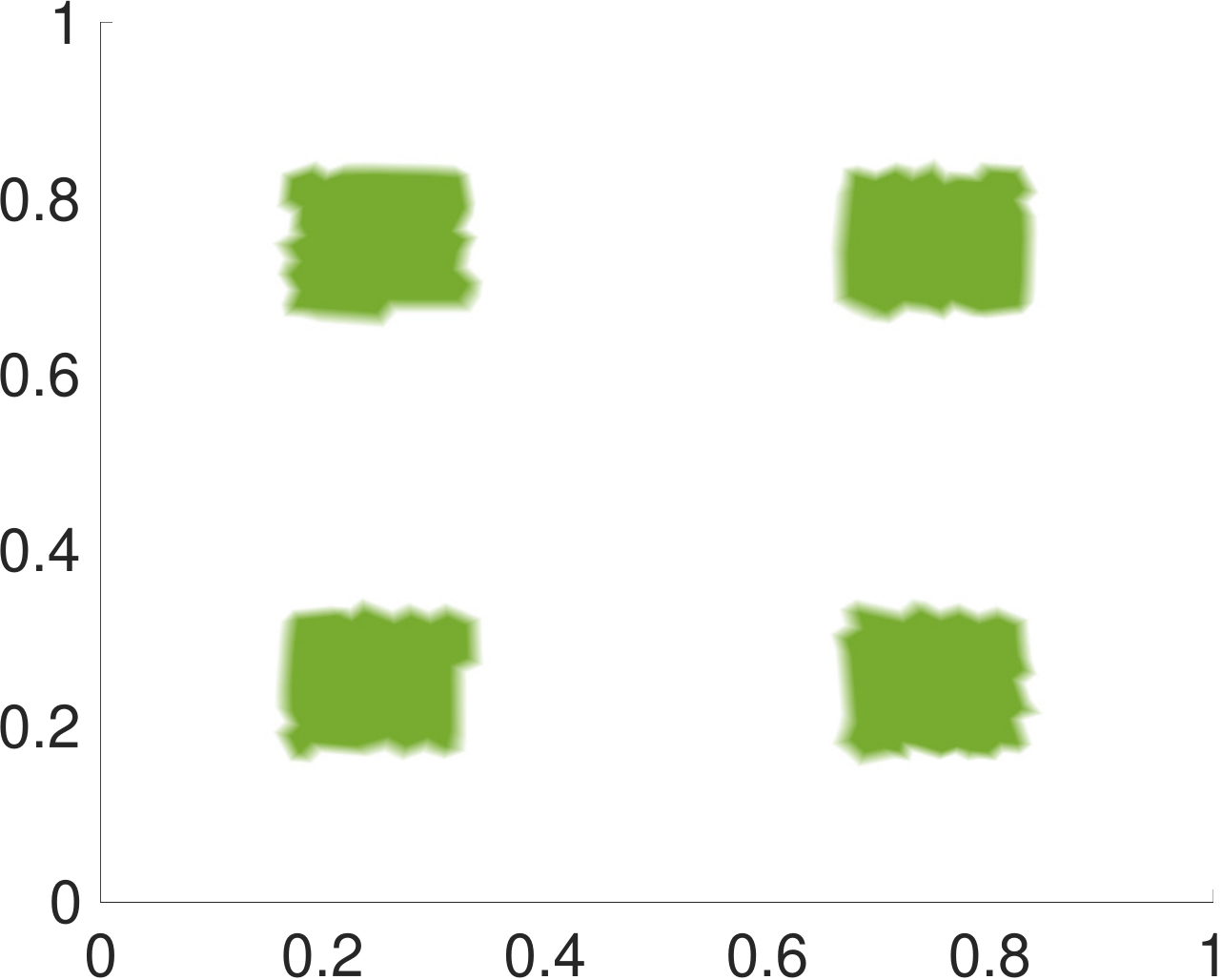}}
\,
\subfigure
{\includegraphics[width=0.32\textwidth]{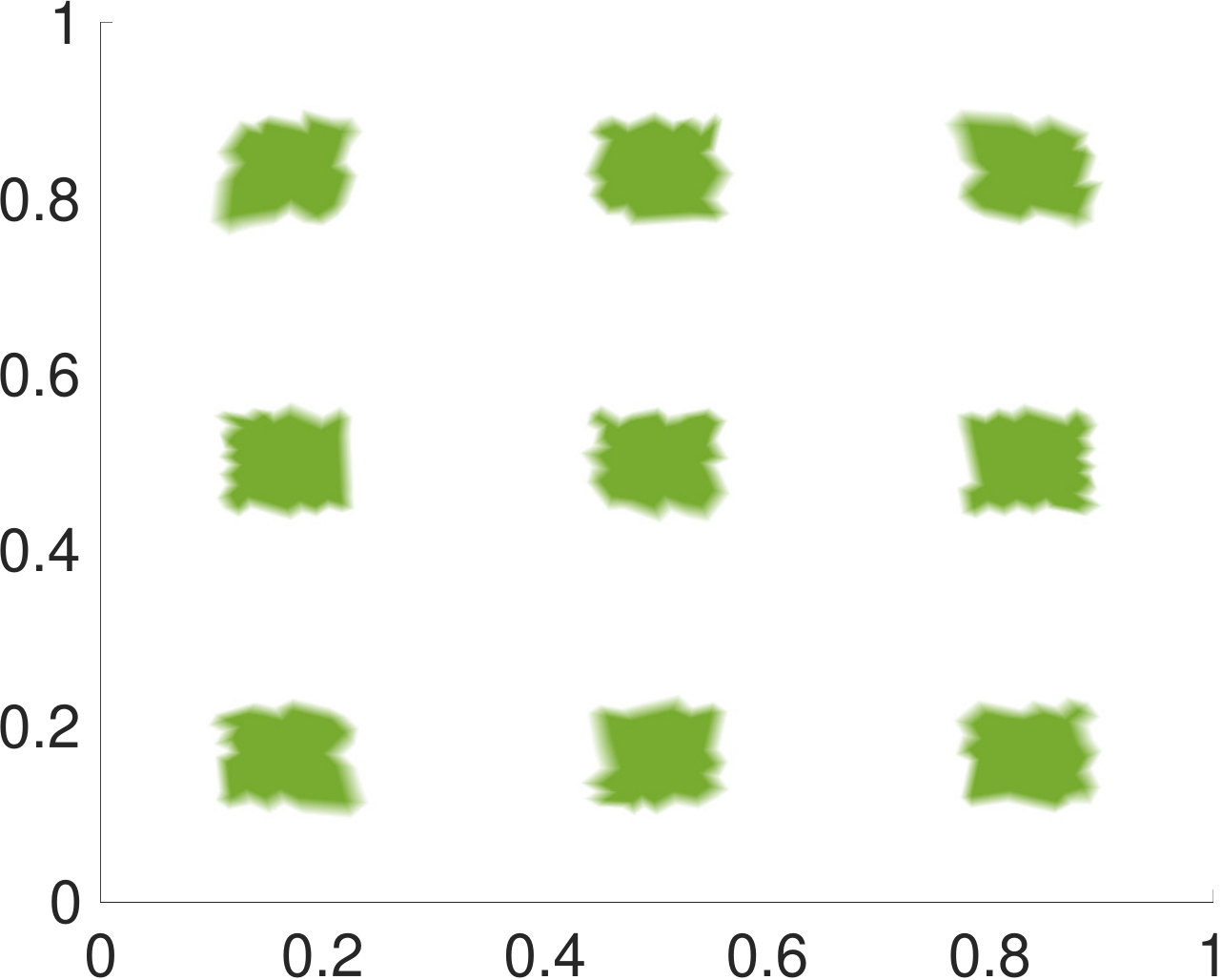}}
\\
\subfigure
{\includegraphics[width=0.32\textwidth]{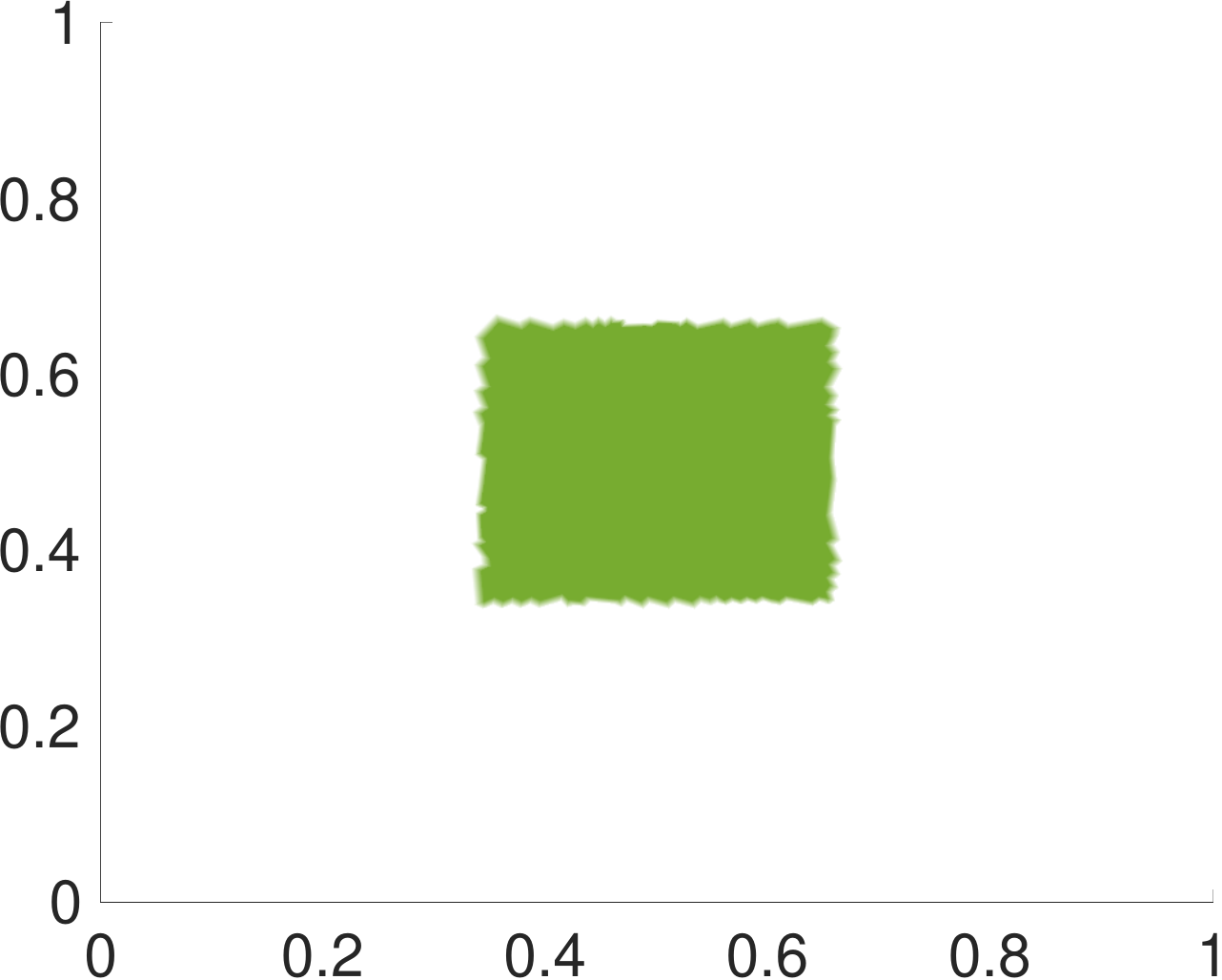}}
\,
\subfigure
{\includegraphics[width=0.32\textwidth]{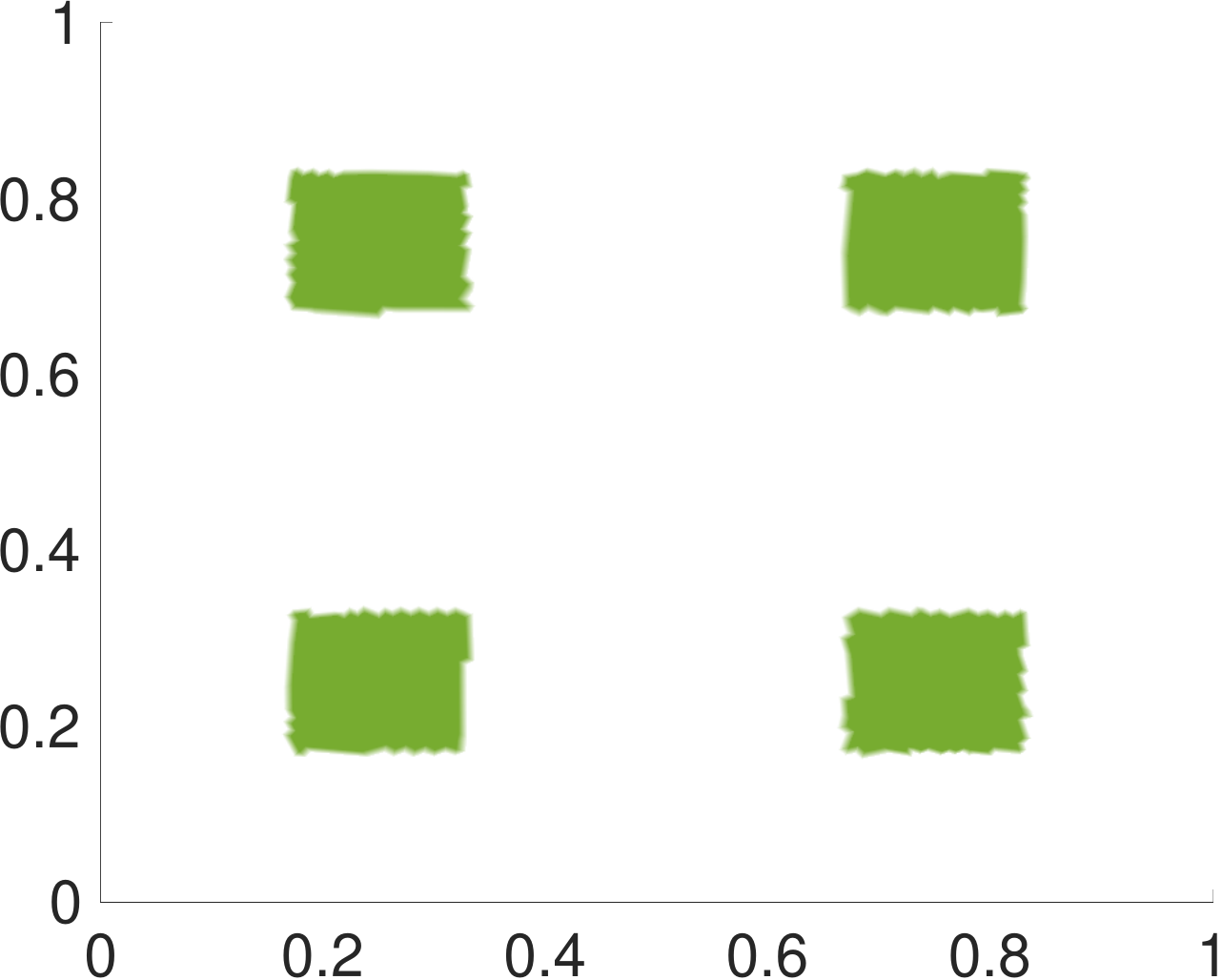}}
\,
\subfigure
{\includegraphics[width=0.32\textwidth]{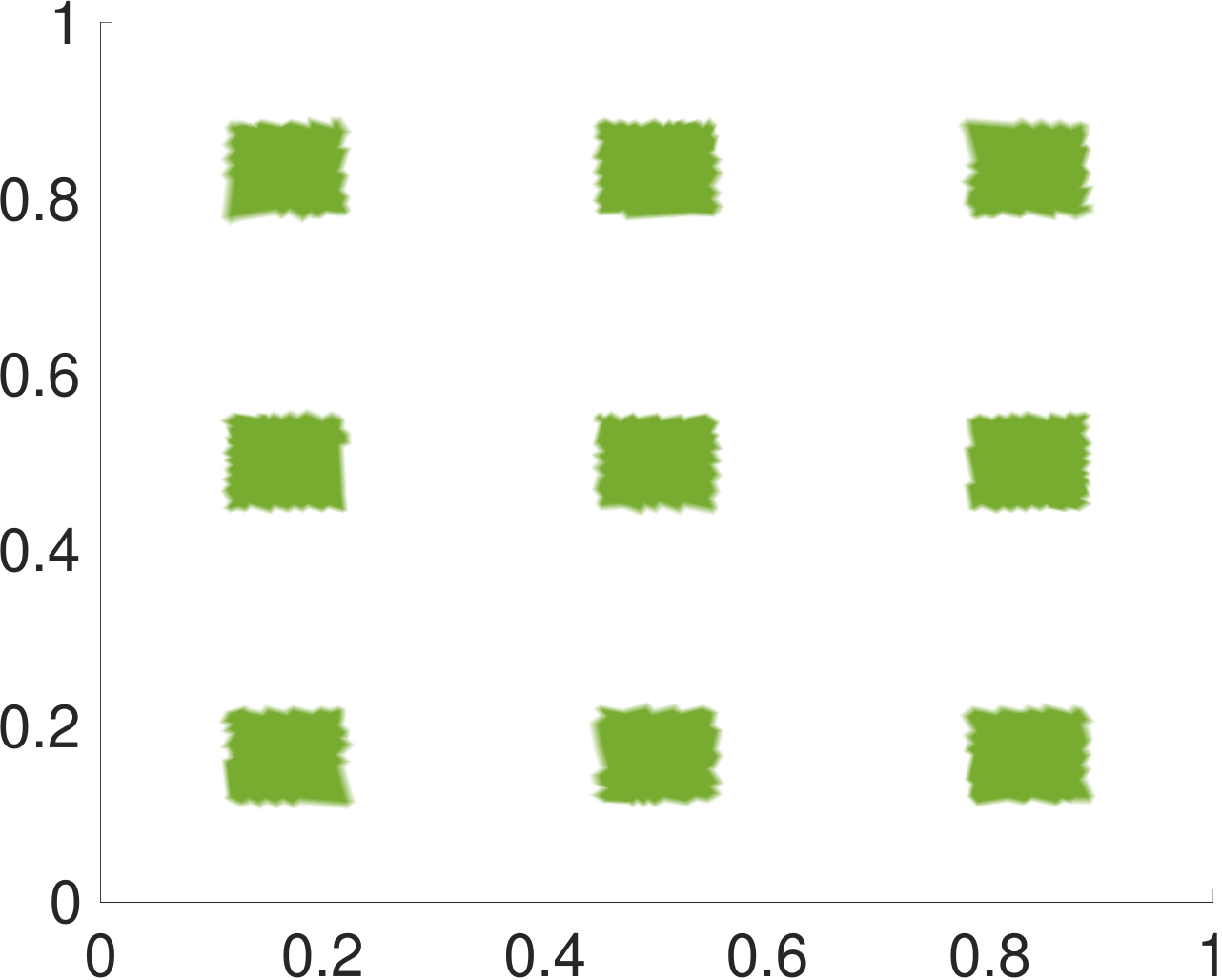}}
\caption{The $\rho$-th row  shows the supports~$\omega_i$ of actuators~$\indf_{\omega_i}$ after~$\rho$ regular refinement, $\rho\in\{1,2,3\}$. }
\label{fig:Mesh_refs}
\end{figure}

We consider~\eqref{sys-y-parab-clK} under Neumann boundary conditions, and
\begin{subequations}\label{dataSystem}
\begin{align}
\nu&=0.1,&\qquad a&=-\tfrac52+x_1 -\norm{\sin(6t+x_1)}{\bbR},\\
 y_0&\coloneqq1-2x_1x_2,&\qquad
b&=\bigl(x_1+x_2,\;\norm{\cos(6t)x_1x_2}{\bbR}\bigr).
\end{align} 
\end{subequations}

The instability of the free dynamics is shown in Figure~\ref{fig:free-dyn}.
Here, we used the coarsest spatial mesh with time step~$k=0.01$ and the mesh obtained after~$3$
regular refinements with time step~$k=0.001$.
\begin{figure}[ht]
\centering
\subfigure
{\includegraphics[width=0.45\textwidth]{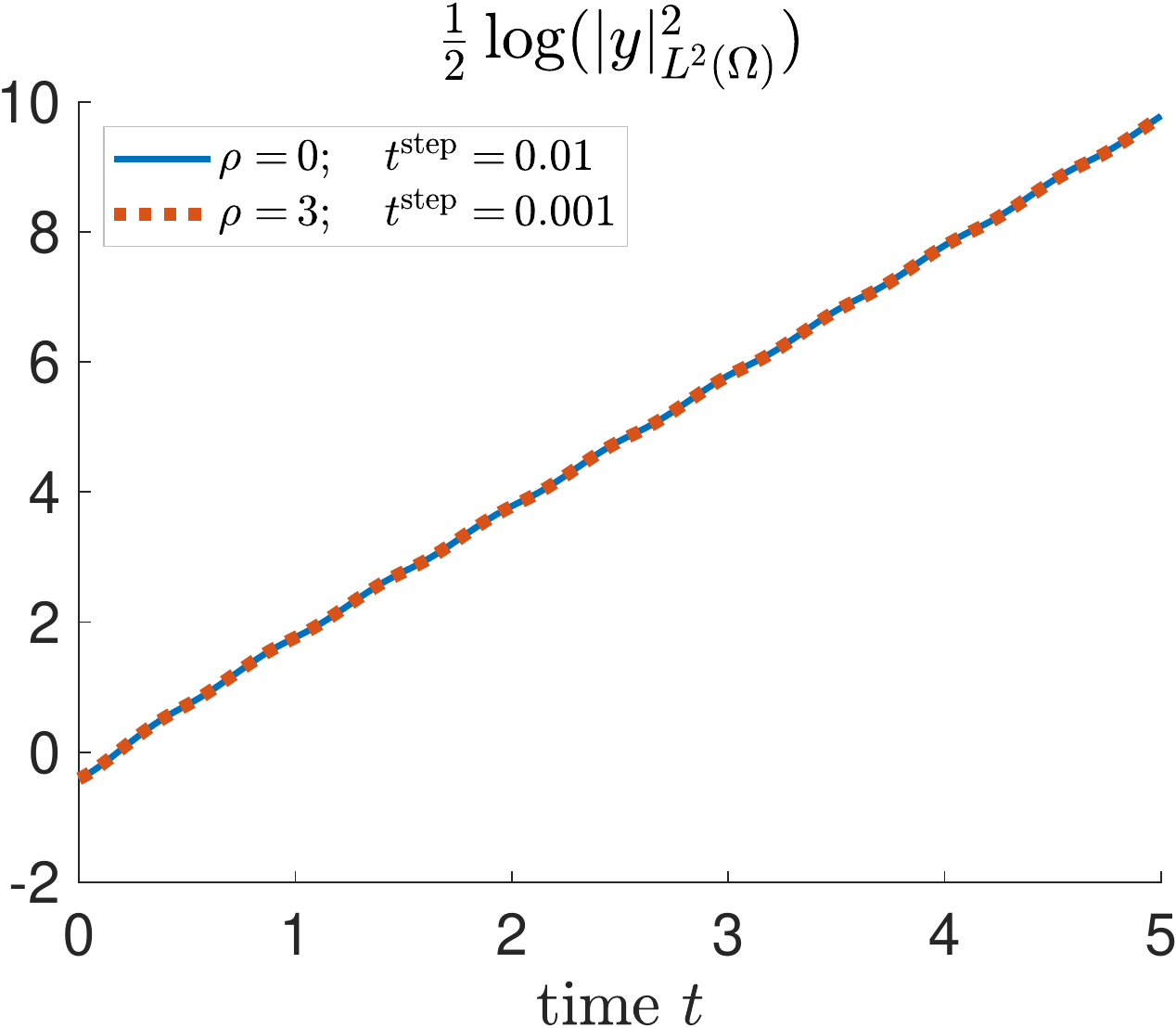}}
\caption{Free dynamics. Evolution of the norm of the solution. }
\label{fig:free-dyn}
\end{figure}

We shall compare the time-truncated cost
\begin{align}\label{costT}
\bfJ=\bfJ^{\rm feed}(y_0,T)\coloneqq\tfrac12\norm{y^{\rm feed}}{L^2((0,T),H)}^2+\beta\tfrac12\norm{u^{\rm feed}}{L^2((0,T),\bbR^{M_0})}^2,
\end{align}
associated to Riccati and oblique projection feedbacks, ${\rm feed}\in\{{\rm ricc},{\rm obli}\}$.

Note that~$(a,b)$ in~\eqref{dataSystem} is time-periodic with period~$\tfrac\pi6$. 
We compute offline (prior to solve the parabolic equations),  the input periodic Riccati 
feedback~$\widehat\bfK=\bfK$ as in~\eqref{F_Ric-disc},
for corresponding actuators and coarse spatial mesh in Figure~\ref{fig:Mesh_coarse},
for
\[
t\in[\tau,\tau+\varpi],\quad\mbox{with}\quad \tau=0.1,\quad\varpi=\tfrac\pi6,
\]
(here, we could have chosen any~$\tau\ge0$) and with the parameters
\begin{align}\label{num-paramRic}
\beta=1,\quad\overline\mu=\mu_{\rm ric}=1,\quad\mbox{and}\quad k_{\rm ric}=0.005.
\end{align}
Such feedback is then used as in~\eqref{discu-ricc} for refined meshes.

The
explicit feedback~\eqref{discu-obli} is computed online (while solving the equations), with the parameter
\[
 \lambda=1.
\]

Since in the time interval~$(0,+\infty)$, the cost~\eqref{LQcost} is minimized by the Riccati feedback,
we may expect to have that, for large~$T$, 
\begin{equation}\label{Ric.le.obli}
\bfJ^{\rm ricc}(y_0,T)\le\bfJ^{\rm obli}(y_0,T). 
\end{equation}

To construct the feedback in~\eqref{discu-obli}, as auxiliary eigenfunctions we have chosen
the Cartesian products of the first $M_0^\frac12$ one-dimensional (Neumann) eigenfunctions (as proposed in~\cite[sect.~4.8.1]{KunRod19-cocv})
\begin{subequations}\label{auxil.eig-x}
\begin{align}
\clE_{M_0}&=\linspan\left\{e_{\bfj}\mid\bfj=(\bfj_1,\bfj_2)\in \{1,2,\dots,M_0^\frac12\}^2 \right\},\qquad M_0\in\{1,4,9\},\\
 e_{\bfj}&\coloneqq\cos\left((\bfj_1-1)\pi x_1\right)
 \cos\left((\bfj_2-1)\pi x_2\right).
\end{align} 
\end{subequations}

\subsection{Using one actuator}\label{sS:num.M=1}
In Figure~\ref{fig:M1expl} we see that the explicit
oblique projection feedback~\eqref{discu-obli} for the case of~$1$ actuator is not able to stabilize the system,
while in Figure~\ref{fig:M1ricc}
we see that the Riccati feedback~\eqref{discu-ricc} is (for the given initial  state~$y_0$).
The later was computed
for the coarsest mesh, and we can also see that it
is still able to stabilize the system exponentially for refined meshes (again, for the given initial state).
Note that we cannot conclude, from the present simulation result corresponding to a single initial condition, that the Riccati feedback will stabilize the solutions corresponding to an arbitrary initial state.
\begin{figure}[ht]
\centering
\subfigure
{\includegraphics[width=0.45\textwidth]{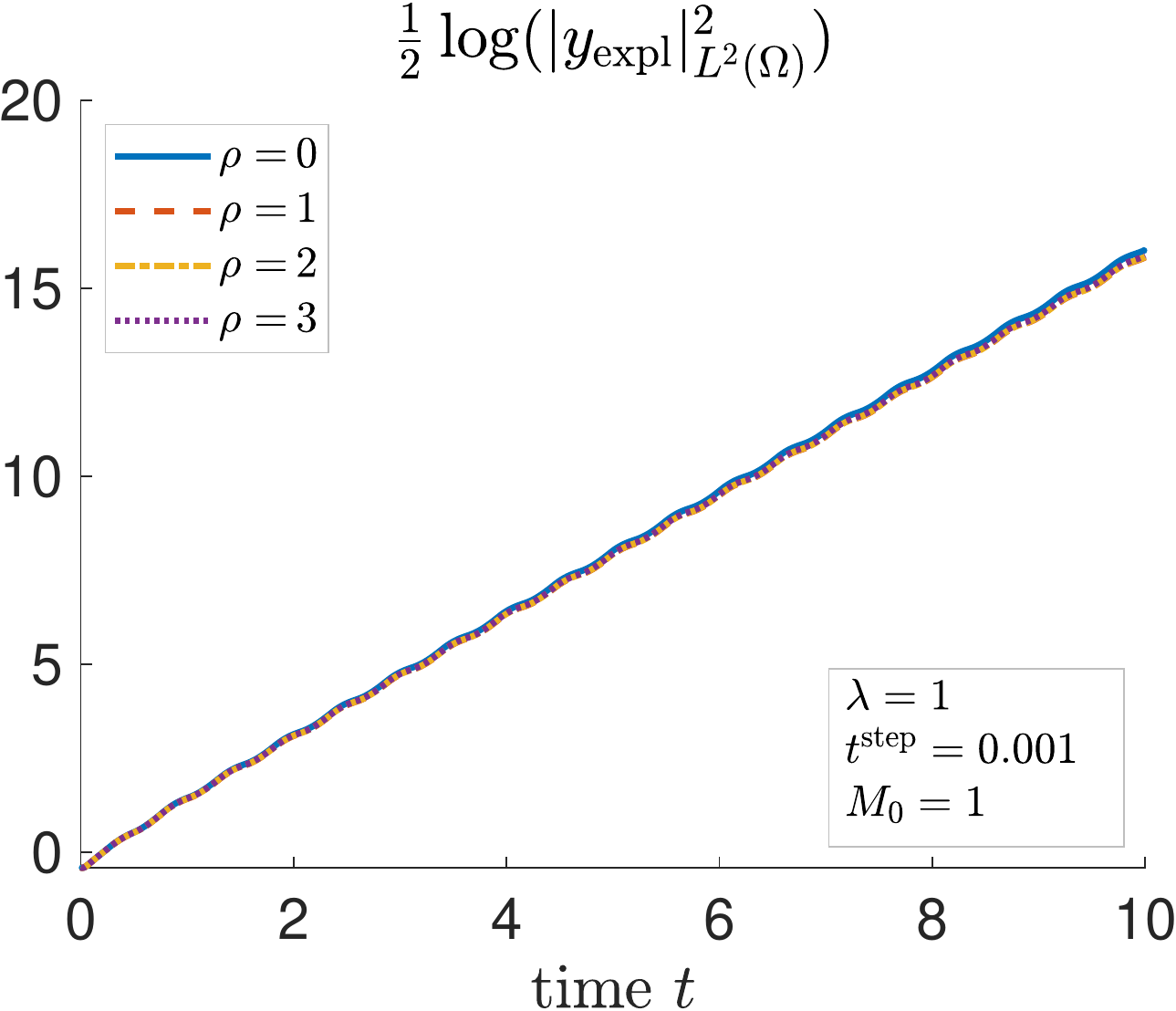}}
\quad
\subfigure
{\includegraphics[width=0.45\textwidth]{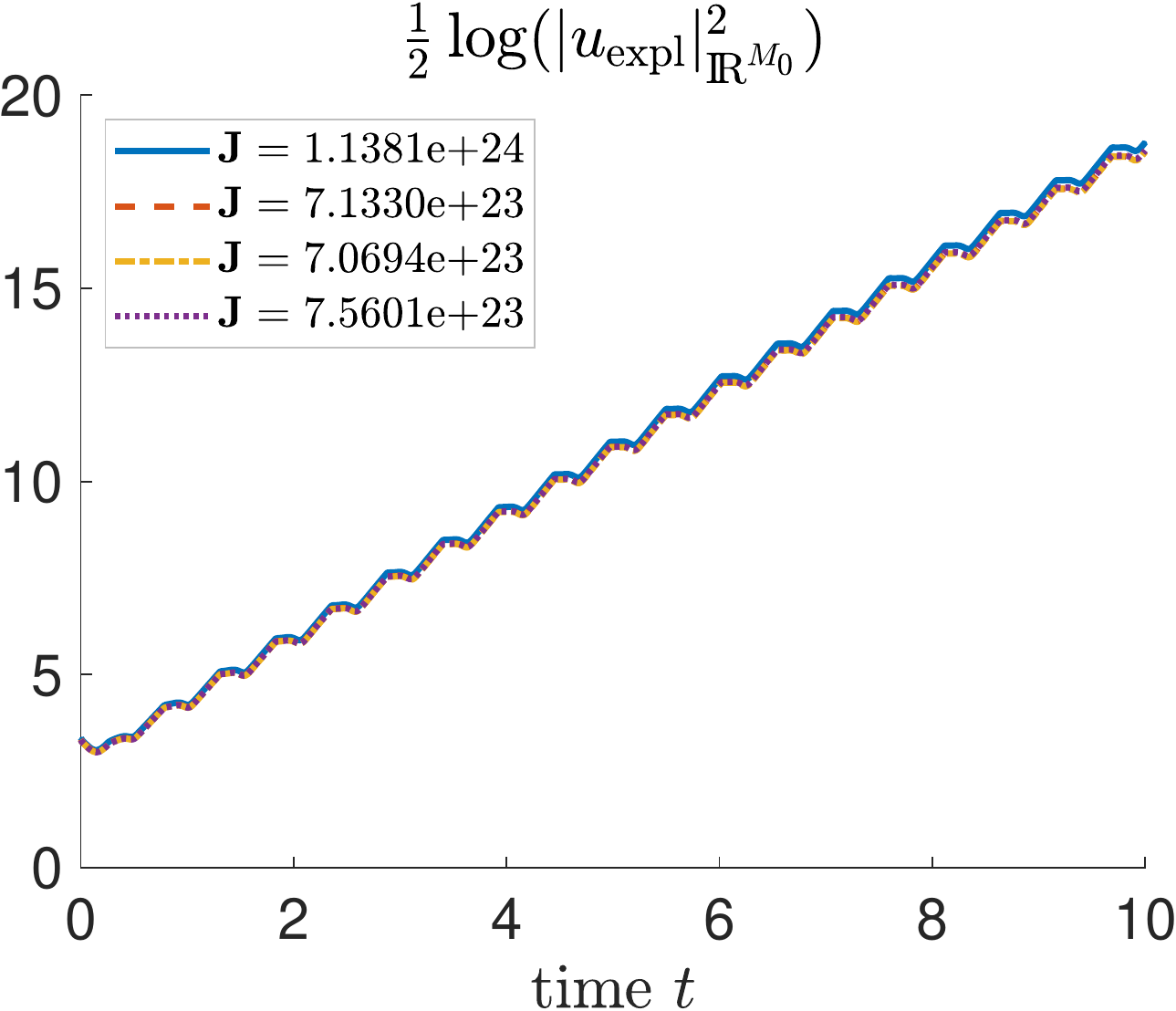}}
\caption{$M_0=1$. Oblique projection input feedback~\eqref{discu-obli}.\newline}
\label{fig:M1expl}
\subfigure
{\includegraphics[width=0.45\textwidth]{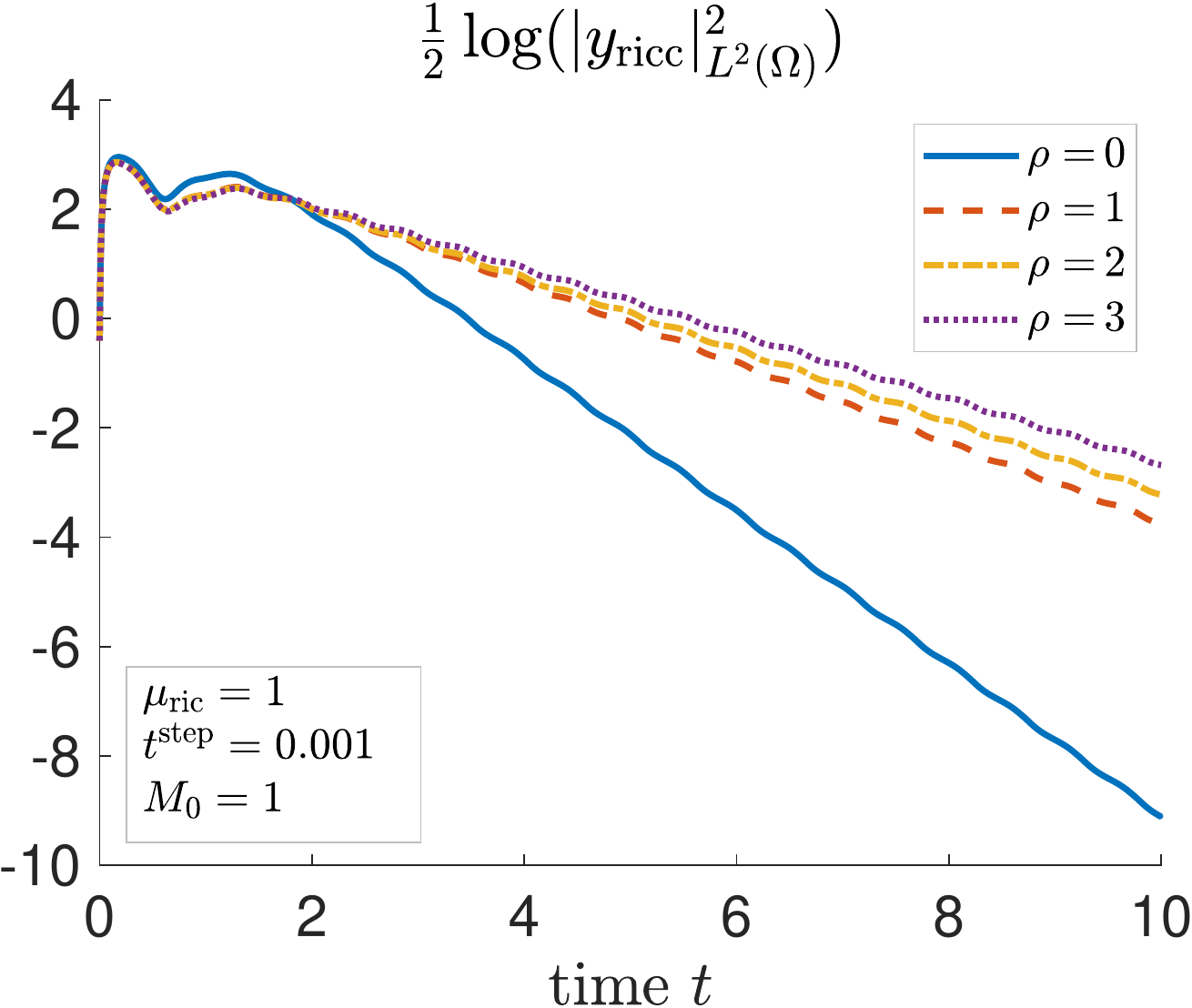}}
\quad
\subfigure
{\includegraphics[width=0.45\textwidth]{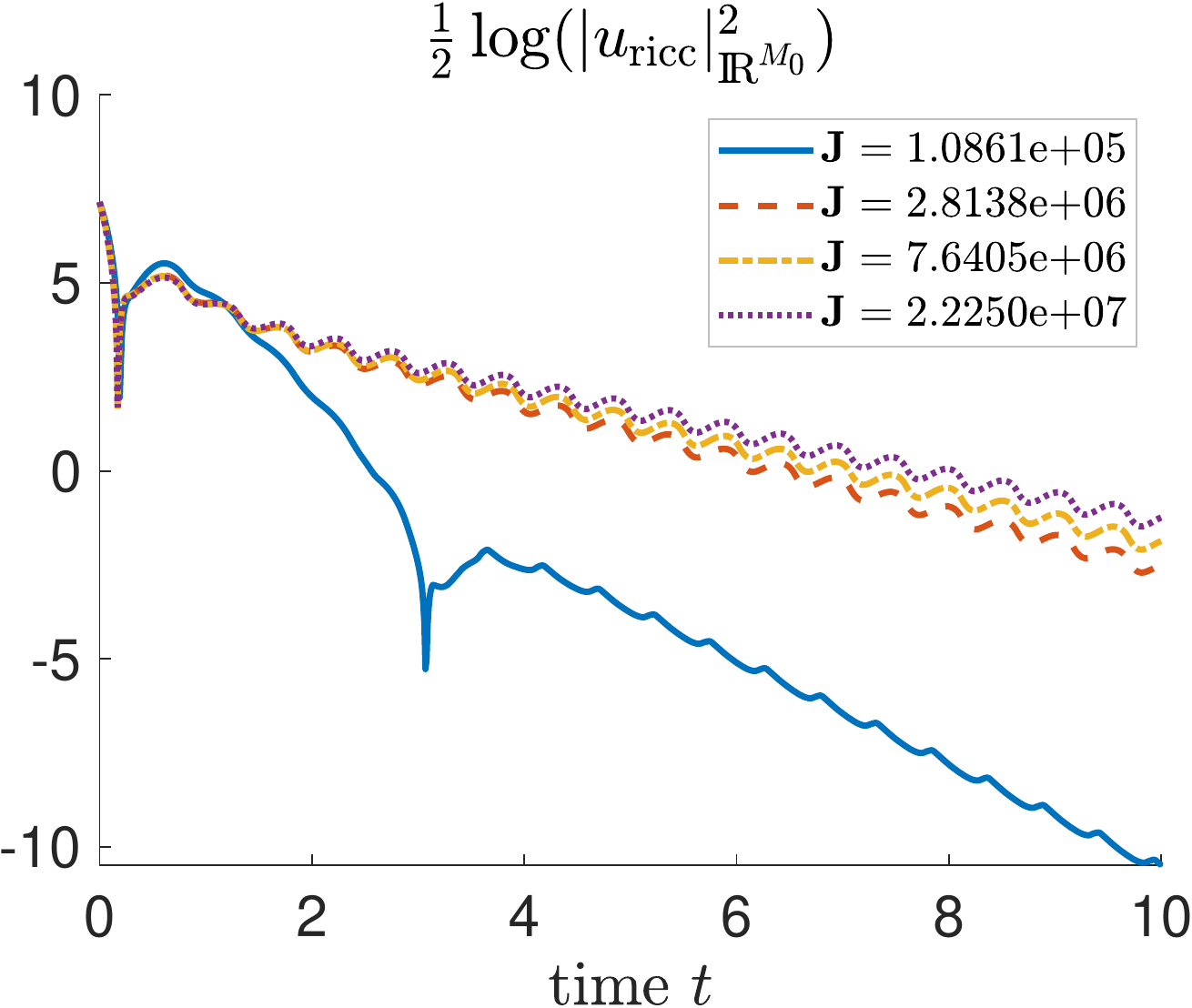}}
\caption{$M_0=1$. Riccati input feedback~\eqref{discu-ricc}.}
\label{fig:M1ricc}
\end{figure}
We also  observe that the asked stability rate~$\overline\mu=\mu_{\rm ric}=1$ is guaranteed for the coarsest mesh but not for the refined meshes. This shows that the Riccati feedback computed for the coarsest mesh does not lead to a good approximation of the Riccati operator solution and to the  Riccati matrix solution for refined triangulations.
This could also be a sign that one single actuator is not able to stabilize the system with exponential rate~$\mu_{\rm ric}=1$ (for all initial states). Indeed for a rectangular with support~$\omega=(\tfrac12-r,\tfrac12+r)$ centered at the center of our spatial square~$\Omega$, this can be seen for the autonomous system corresponding to the reaction-convection pair~$(a,b)=(c,(0,0))$ with small enough constant~$c<0$, because for the solution of the system
\[
\tfrac{\p}{\p t} z+(-\nu\Delta+\Id) z+cz=u\indf_\omega,\qquad z(0)=z_0\coloneqq\cos(\pi x_1)\cos(\pi x_2),
\]
under Neumann boundary conditions, since~$(z_0,\indf_\omega)_{L^2(\Omega)}=0$ and
$z_0$ is an eigenfunction of~$-\nu\Delta+\Id+c\Id$, we find that,  for arbitrary control input~$u\in L^2(\bbR_+,\bbR)$ we will have
\[
\tfrac{\p}{\p t} (P_{\bbR z_0}z)=(-2\pi^2\nu-1-c)P_{\bbR z_0}z,\quad P_{\bbR z_0}z(0)=z_0,
\]
where~$P_{\bbR z_0}$ is the orthogonal projection onto the linear span~${\bbR z_0}$ of~$\{z_0\}$. Hence,
\[
P_{\bbR z_0}z(t)=\rme^{(-2\pi^2\nu-1-c)t}z_0,
\]
which diverges to~$+\infty$ if~$c<-2\pi^2\nu-1$.

\subsection{Using four actuators}
We increase the number of actuators (preserving the total volume covered by then).
Figures~\ref{fig:M4expl} and~\ref{fig:M4ricc} show that, with~$4$ actuators,
both oblique projection and Riccati based feedbacks  are able to stabilize the system.
\begin{figure}[ht]
\centering
\subfigure
{\includegraphics[width=0.45\textwidth]{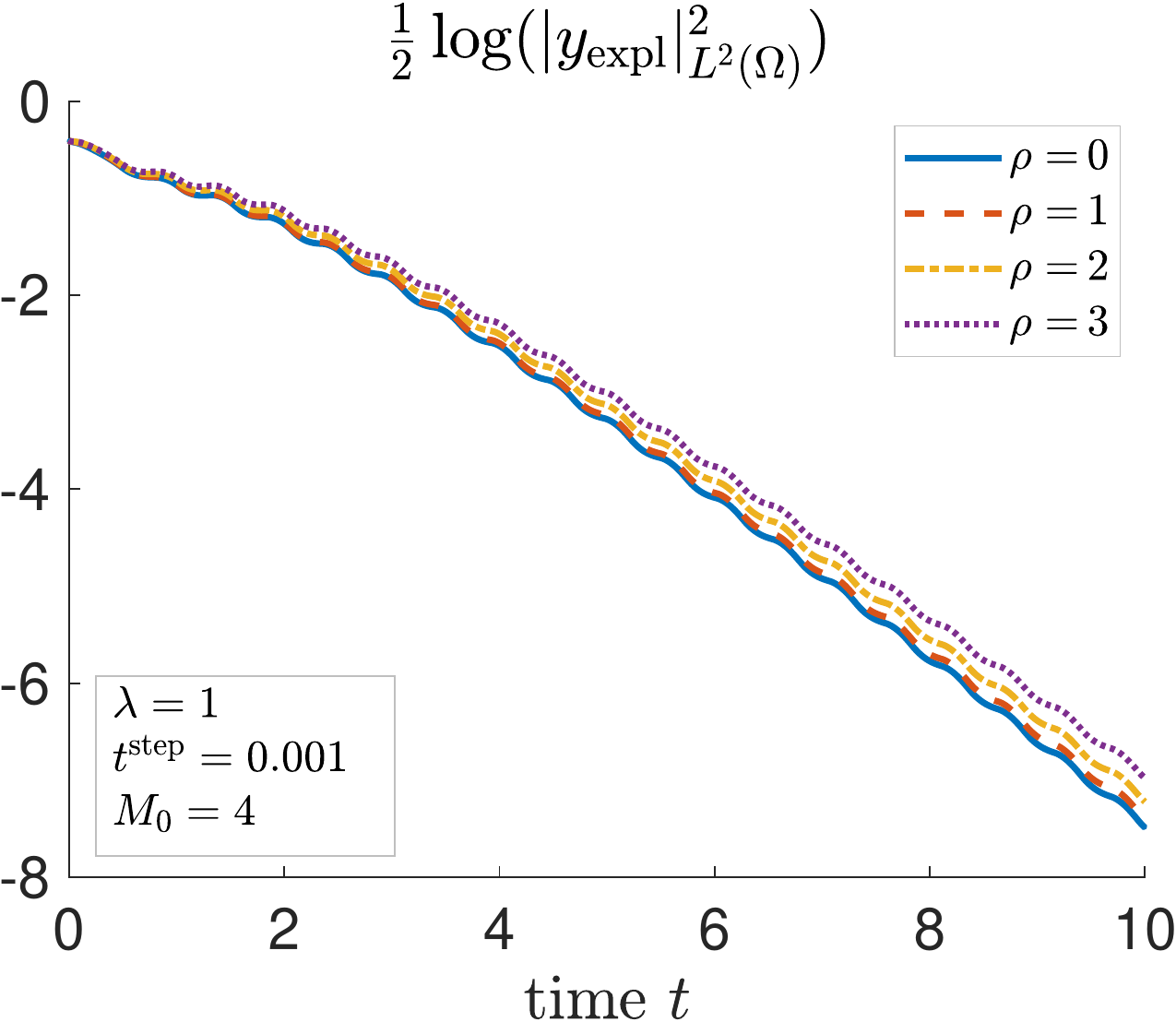}}
\quad
\subfigure
{\includegraphics[width=0.45\textwidth]{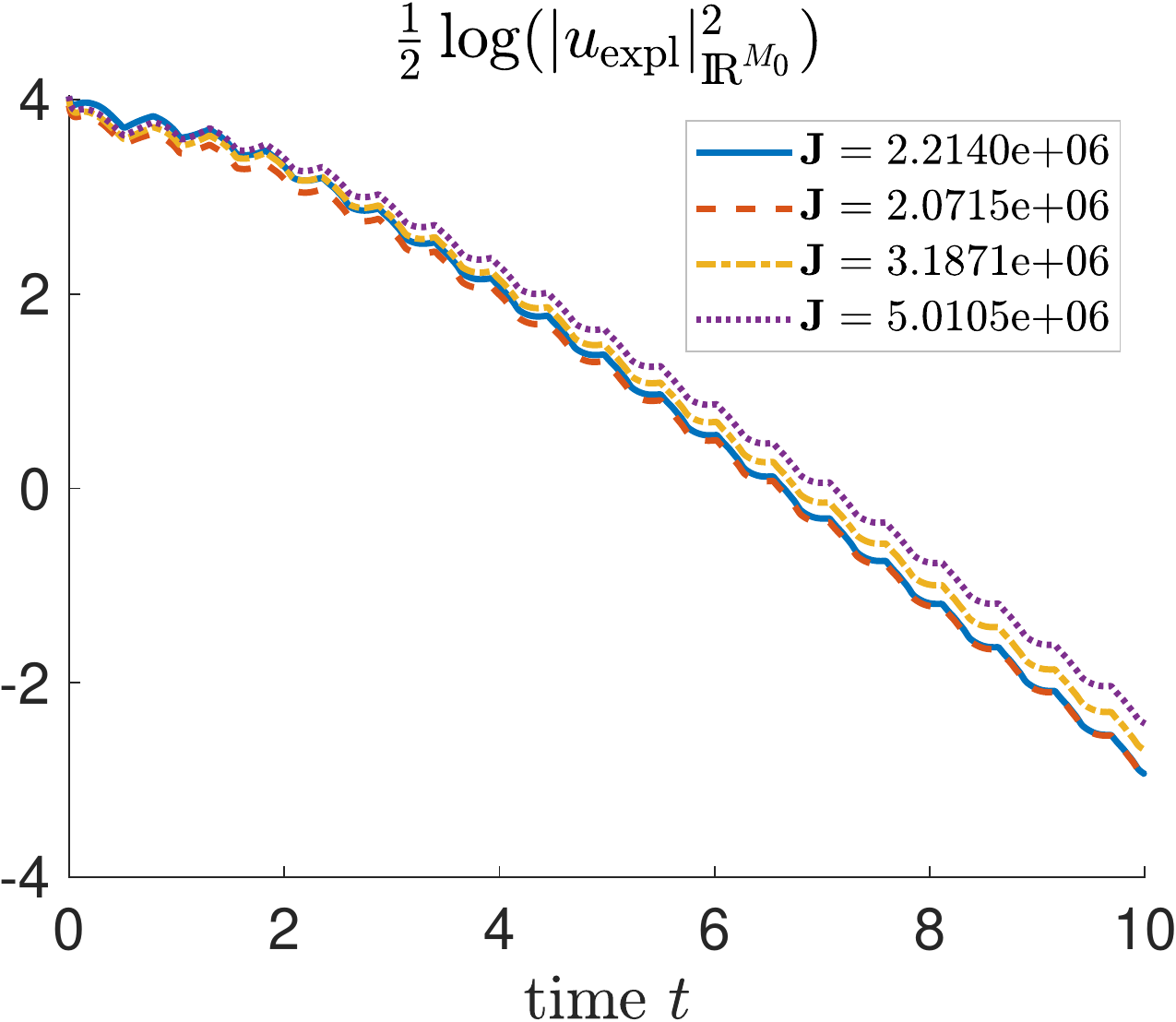}}
\caption{$M_0=4$. Oblique projection input feedback~\eqref{discu-obli}.\newline}
\label{fig:M4expl}
\subfigure
{\includegraphics[width=0.45\textwidth]{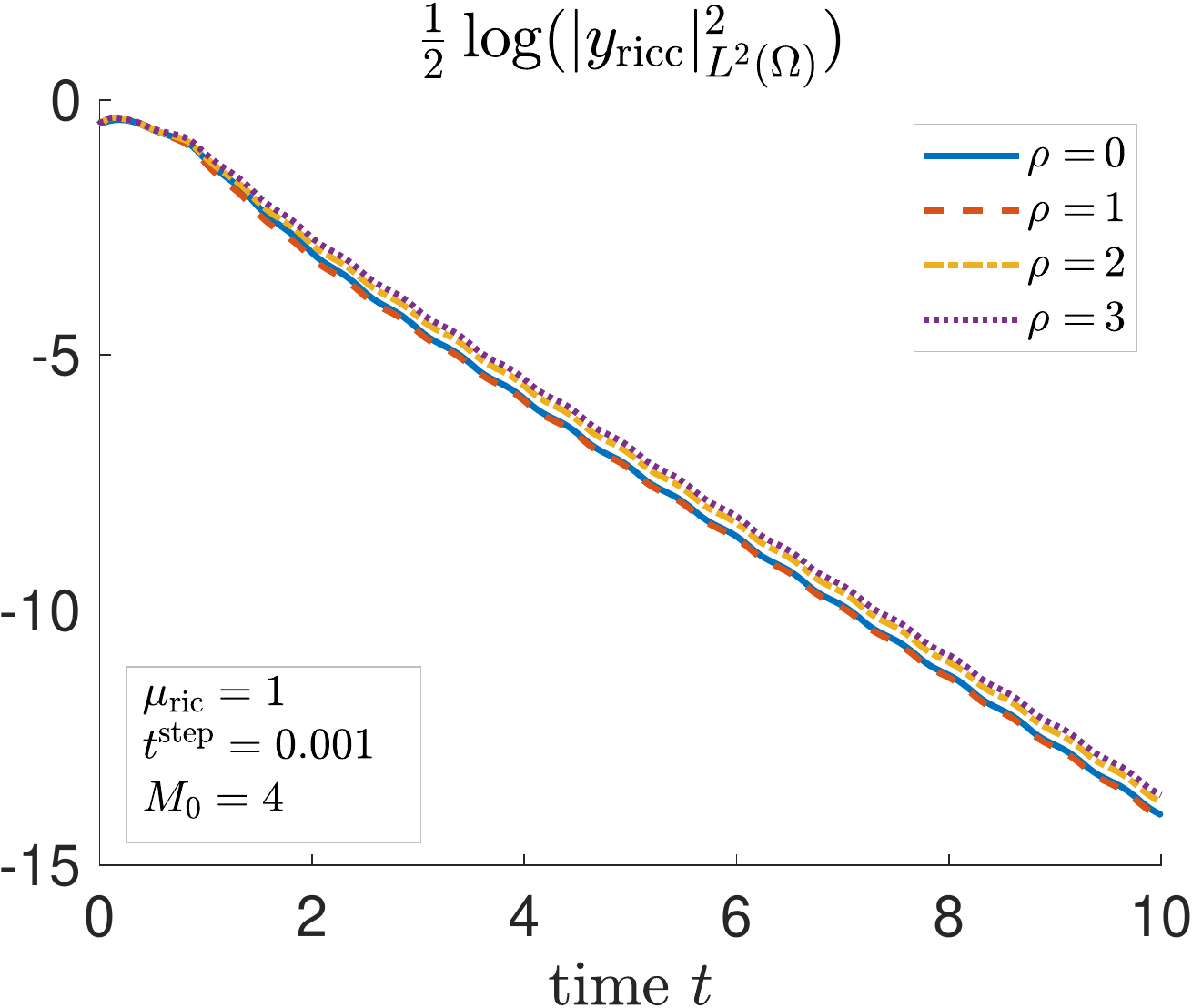}}
\quad
\subfigure
{\includegraphics[width=0.45\textwidth]{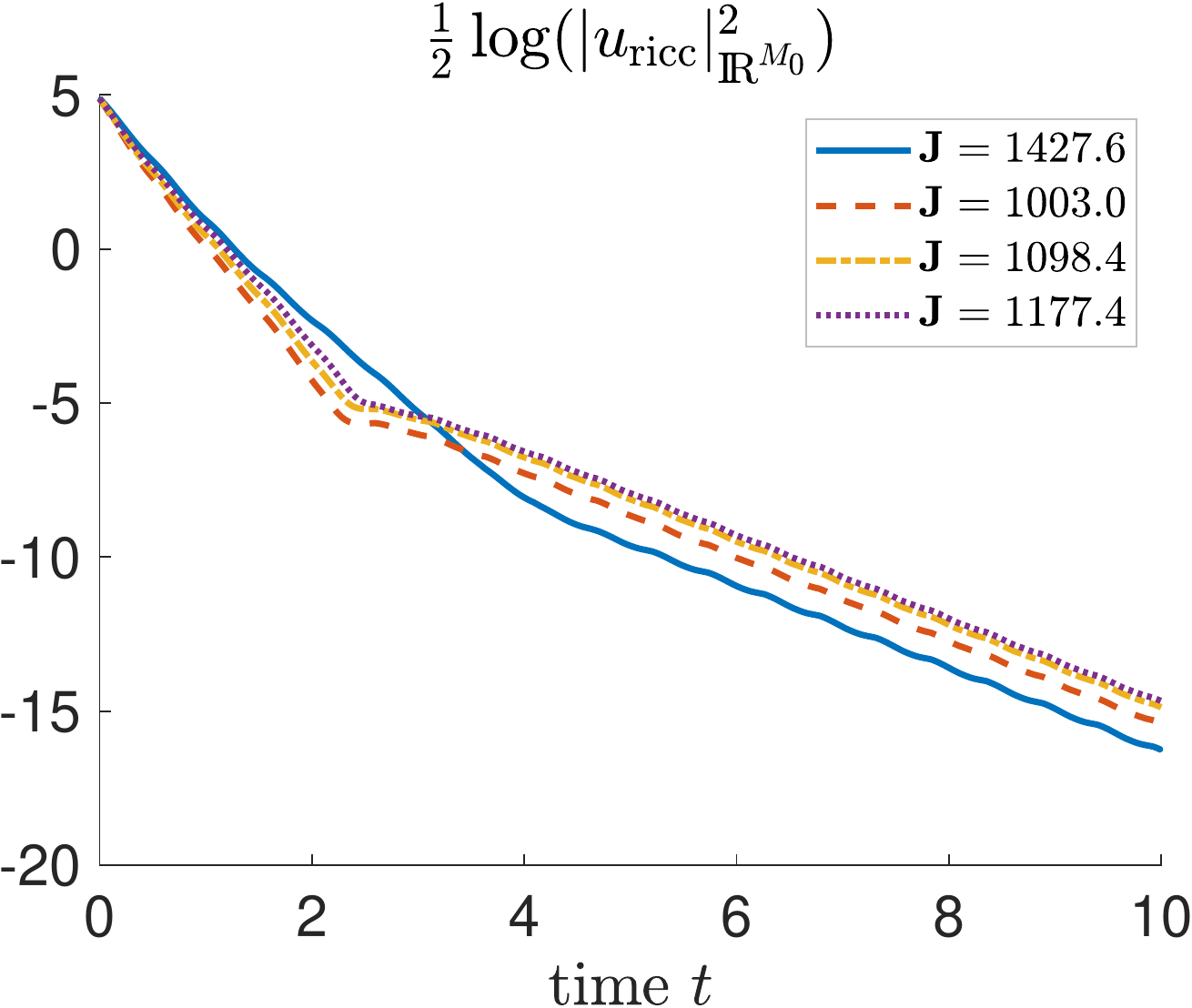}}
\caption{$M_0=4$. Riccati input feedback~\eqref{discu-ricc}.}
\label{fig:M4ricc}
\end{figure}
As expected, we also see that the quadratic cost functional~\eqref{costT} is smaller for Riccati.

Further, we see that an exponential stability rate~$\mu\le\lambda=1$ is provided by the  oblique projection
feedback and that an exponential stability rate~$\mu>\mu_{\rm ric}=1$ is provided by the Riccati feedback.
This confirms the theoretical results.

As we see in Fig.~\ref{fig:Mesh_coarse}, we have a rough  approximation of the~$4$ actuators in the coarse mesh at least when compared with the approximation after~$3$ refinements as in Fig~\ref{fig:Mesh_refs}. In spite of this fact, we still observe in Fig.~\ref{fig:M4ricc} that the Riccati feedback computed for the coarsest mesh also provides a stability rate~$\mu>\mu_{\rm ric}$ for the refined meshes. This is a first sign towards the validation the approach we propose of computing the Riccati input feedback  operator in coarse meshes and using it in refined meshes.

\subsection{Using nine actuators}
To strengthen the validation of the proposed approach, we consider next the case of~$9$ actuators, whose approximation in Fig.~\ref{fig:Mesh_coarse} is again rough when compared to the one obtained in the mesh after~$3$ refinements shown in Fig~\ref{fig:Mesh_refs}.
With~$9$ actuators we see, in Figures~\ref{fig:M9expl} and~\ref{fig:M9ricc}, that
both oblique projection and Riccati  feedbacks are able to stabilize the system. Again we see that an exponential stability rate~$\mu\le\lambda=1$ is provided by the  oblique projection
feedback and that an exponential stability rate~$\mu>\mu_{\rm ric}=1$ for both the coarsest and the refined meshes. 
\begin{figure}[ht]
\centering
\subfigure
{\includegraphics[width=0.45\textwidth]{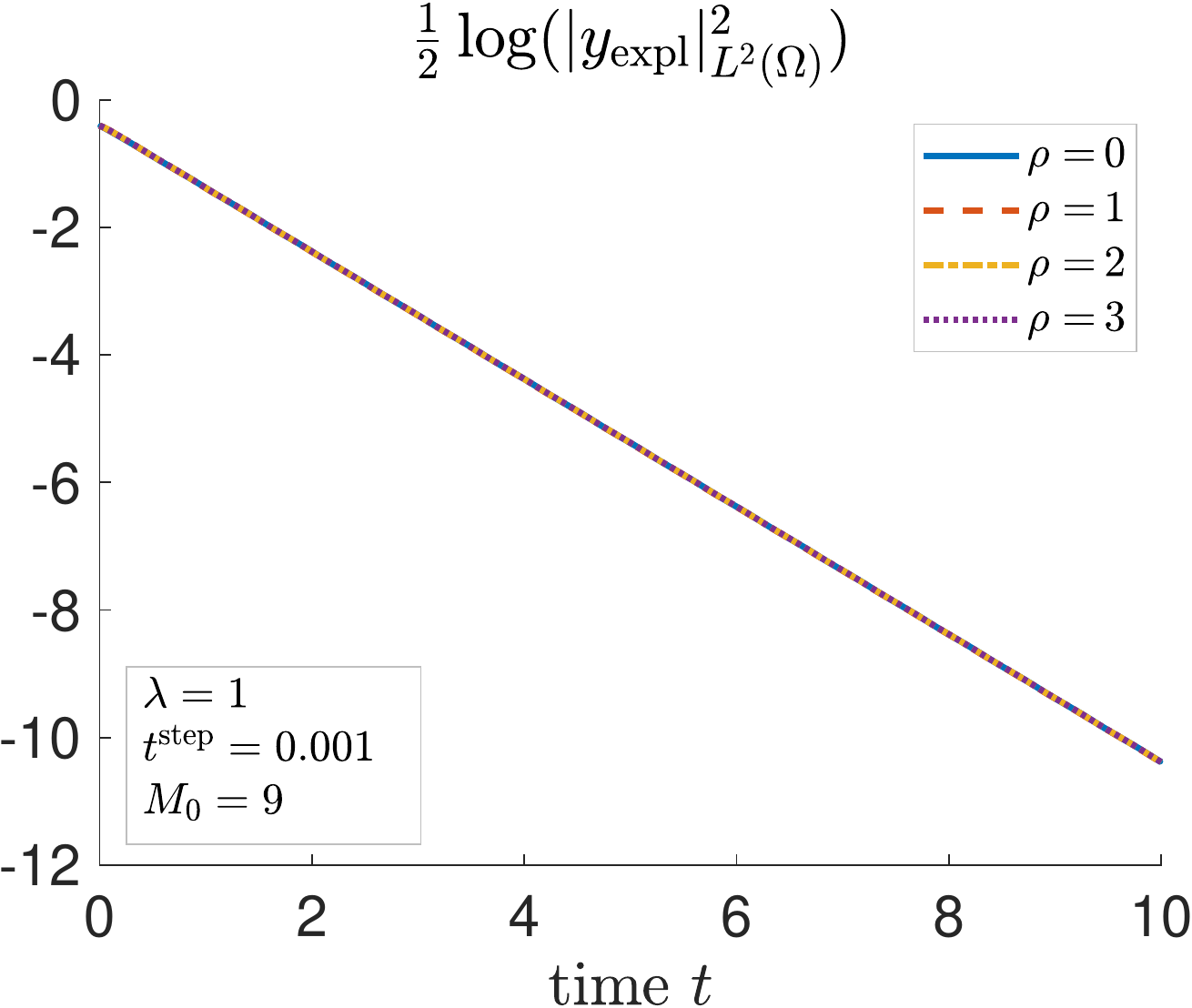}}
\quad
\subfigure
{\includegraphics[width=0.45\textwidth]{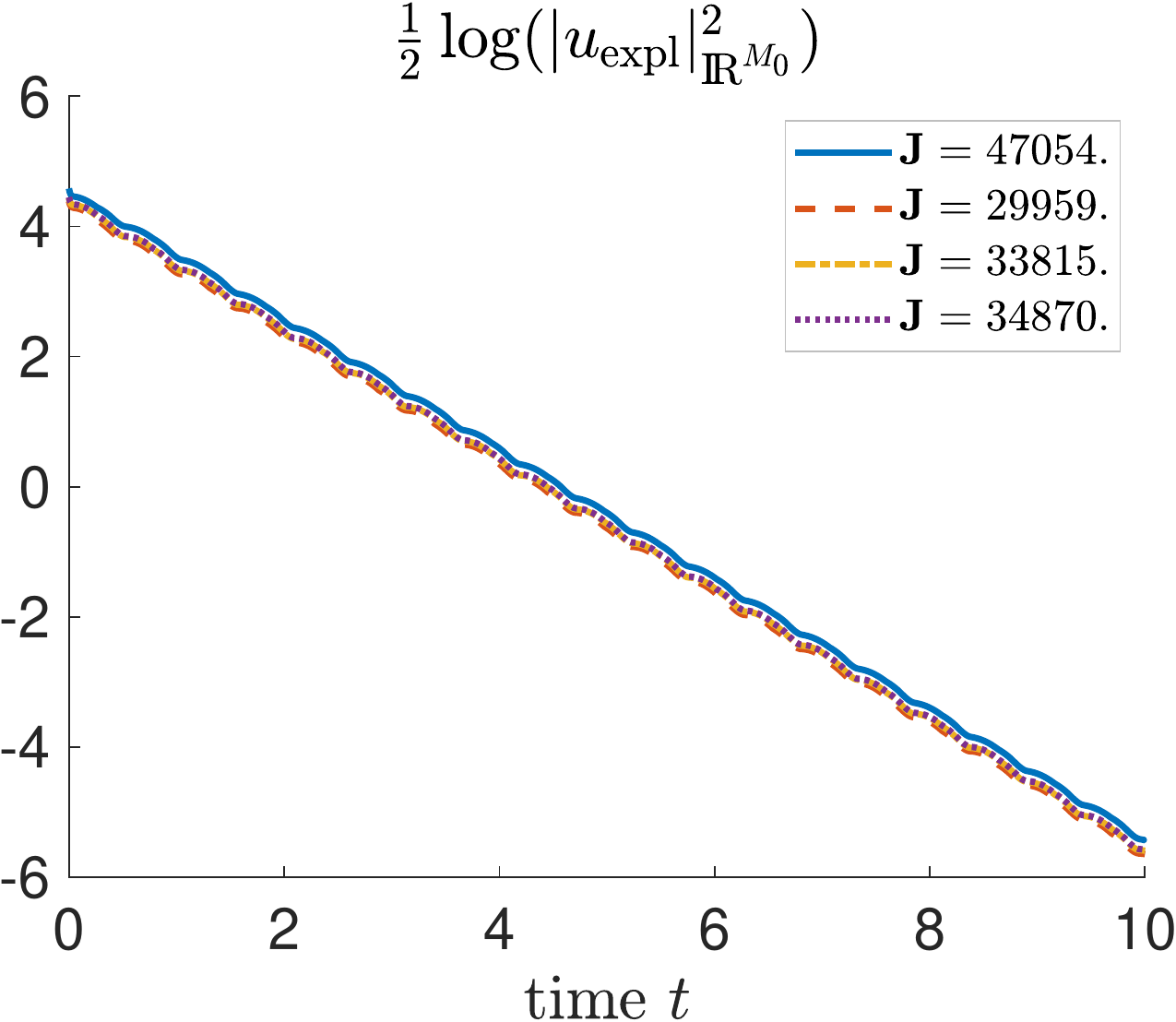}}
\caption{$M_0=9$. Oblique projection input feedback~\eqref{discu-obli}.\newline}
\label{fig:M9expl}
\subfigure
{\includegraphics[width=0.45\textwidth]{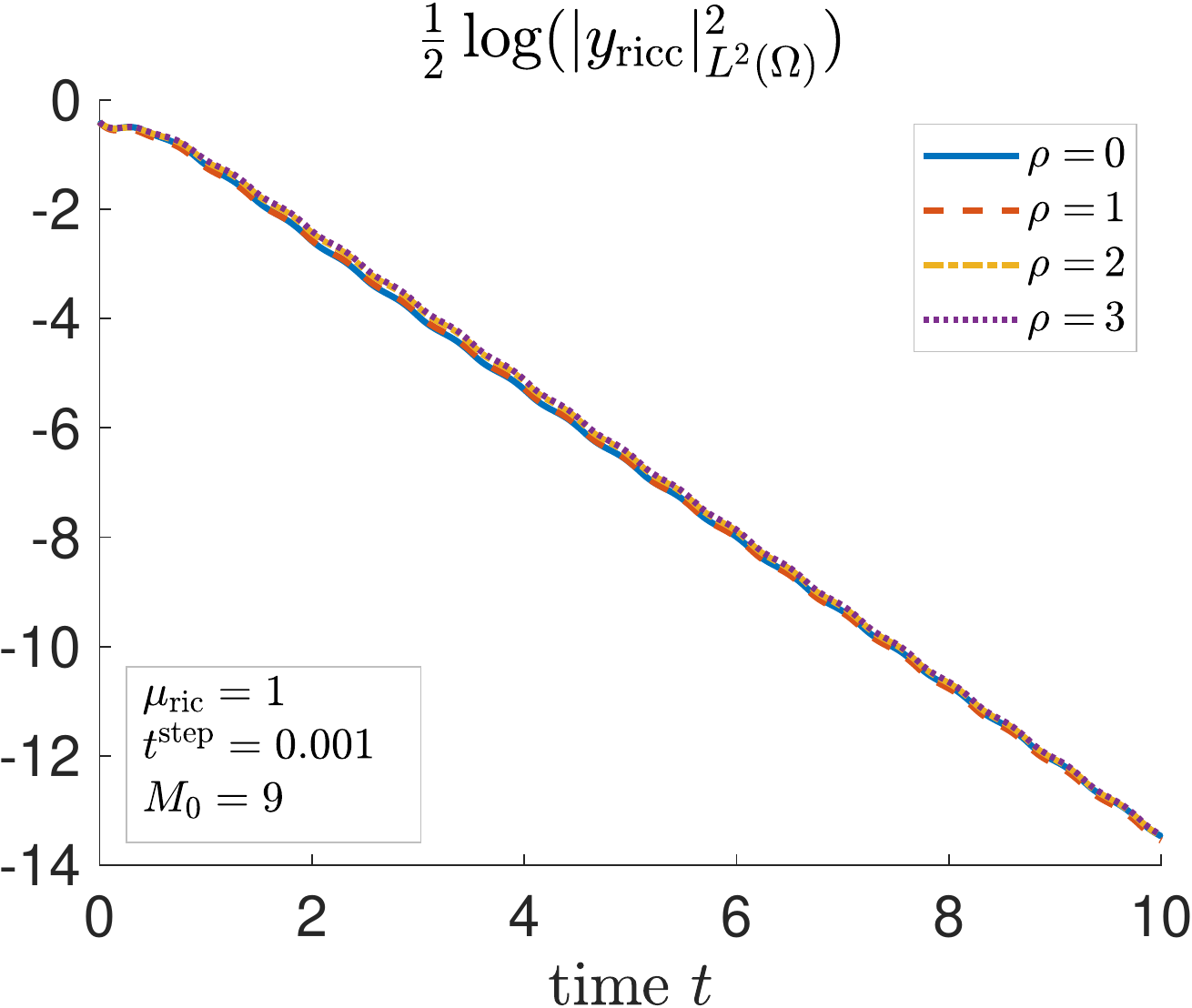}}
\quad
\subfigure
{\includegraphics[width=0.45\textwidth]{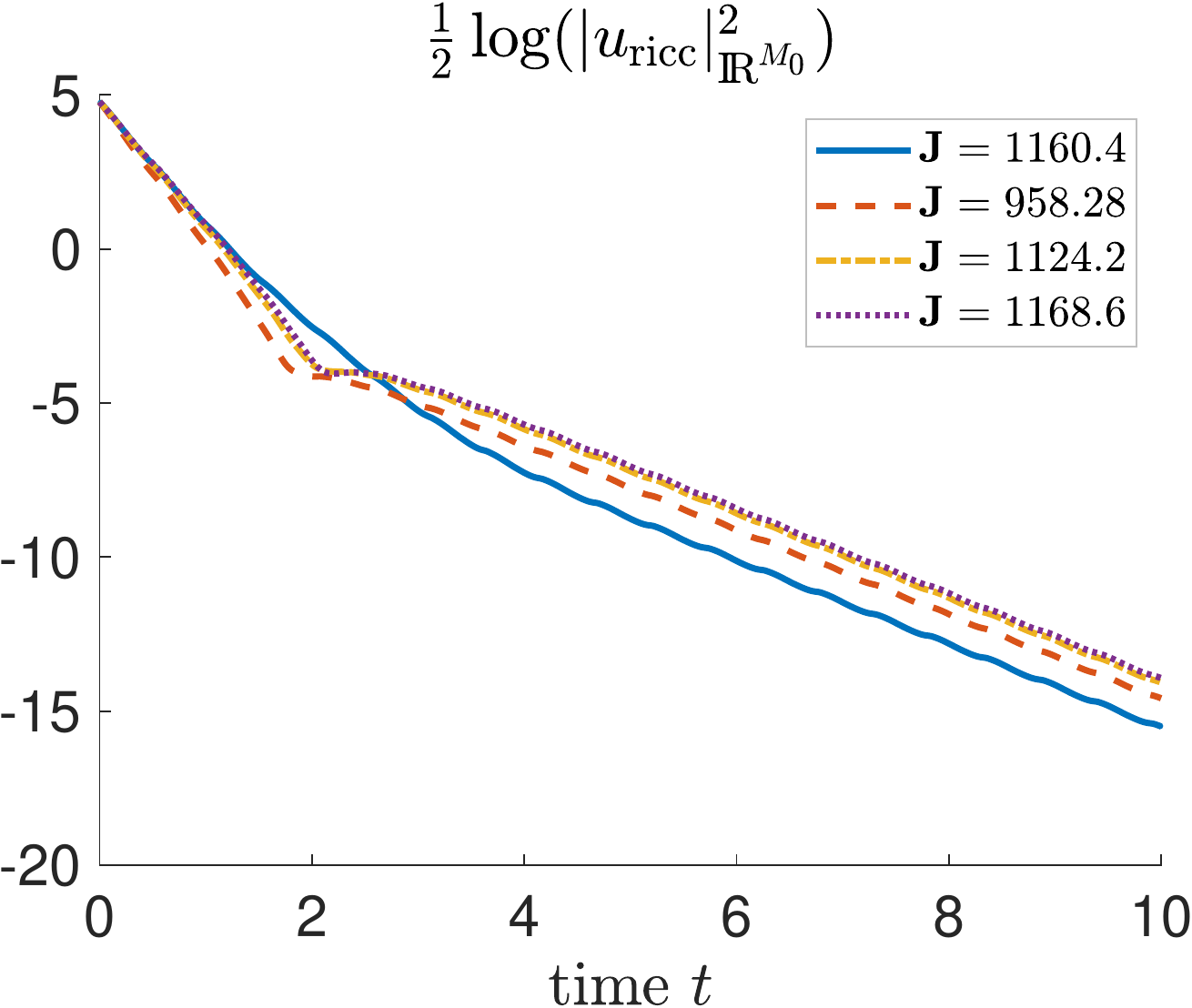}}
\caption{$M_0=9$. Riccati input feedback~\eqref{discu-ricc}.}
\label{fig:M9ricc}
\end{figure}
Furthermore, it is interesting to observe that, with the naked eye, we cannot see a difference on
the behavior of the norm of the state in Figure~\ref{fig:M9expl}.
This shows that with the coarsest mesh we obtain already an accurate behavior of the controlled dynamics. 
This could be partially explained from the fact that the dynamics of the projection~$z=P_{\clE_{M_0}}y$ is explicitly
imposed, $\dot z=-\lambda z$; see~\eqref{dyn-fin}.
Finally, we see that by taking a larger number~$M_0$ of actuators the quadratic
 cost decreases for both feedbacks, note that this is a
 nontrivial observation because, in particular, by construction (following~\cite[sect.~4.8.1]{KunRod19-cocv}), see~Fig.~\ref{fig:Mesh_refs}, last row)
 the total volume (area) covered by the actuators
is independent of the number of actuators.
Our spatial domain
is partitioned into~$M_0$ rescaled copies
of itself, and a rescaled actuator-subdomain is placed in each copy.

\subsection{Performance of algorithm solving the periodic Riccati equation}
In Fig.~\ref{fig:iterRicPer} we show the performance of iterative Algorithm~\ref{Alg:PRE}, by showing the evolution of the error until it reaches a value smaller than~${\rm tol}=(N{\tt eps})^\frac12\approx2.4845\times10^{-07}$.
The error converges exponentially to zero, which confirms the result in Theorem~\ref{T:convRicPer}.
It is interesting to see that, after a suitable number~$\underline n$ of iterations, the exponential rates for the cases $M_0\in\{1,4,9\}$ are close to each other, even likely the same with the naked eye as
\begin{align}
{\rm error}(n+1)&\approx\rme^{-0.4}{\rm error}(n)\approx\rme^{-0.4(n+1-\underline n)}{\rm error}(\underline n),\quad\mbox{for}\quad n\ge\underline n,\notag
\intertext{where we have denoted}
{\rm error}(n)&\coloneqq\norm{\Pi^{n}(\tau)-\Pi^{n}(\tau+\varpi)}{\clL(L^2(\Omega)}.\notag
\end{align}
\begin{figure}[ht]
\centering
\subfigure
{\includegraphics[width=0.7\textwidth]{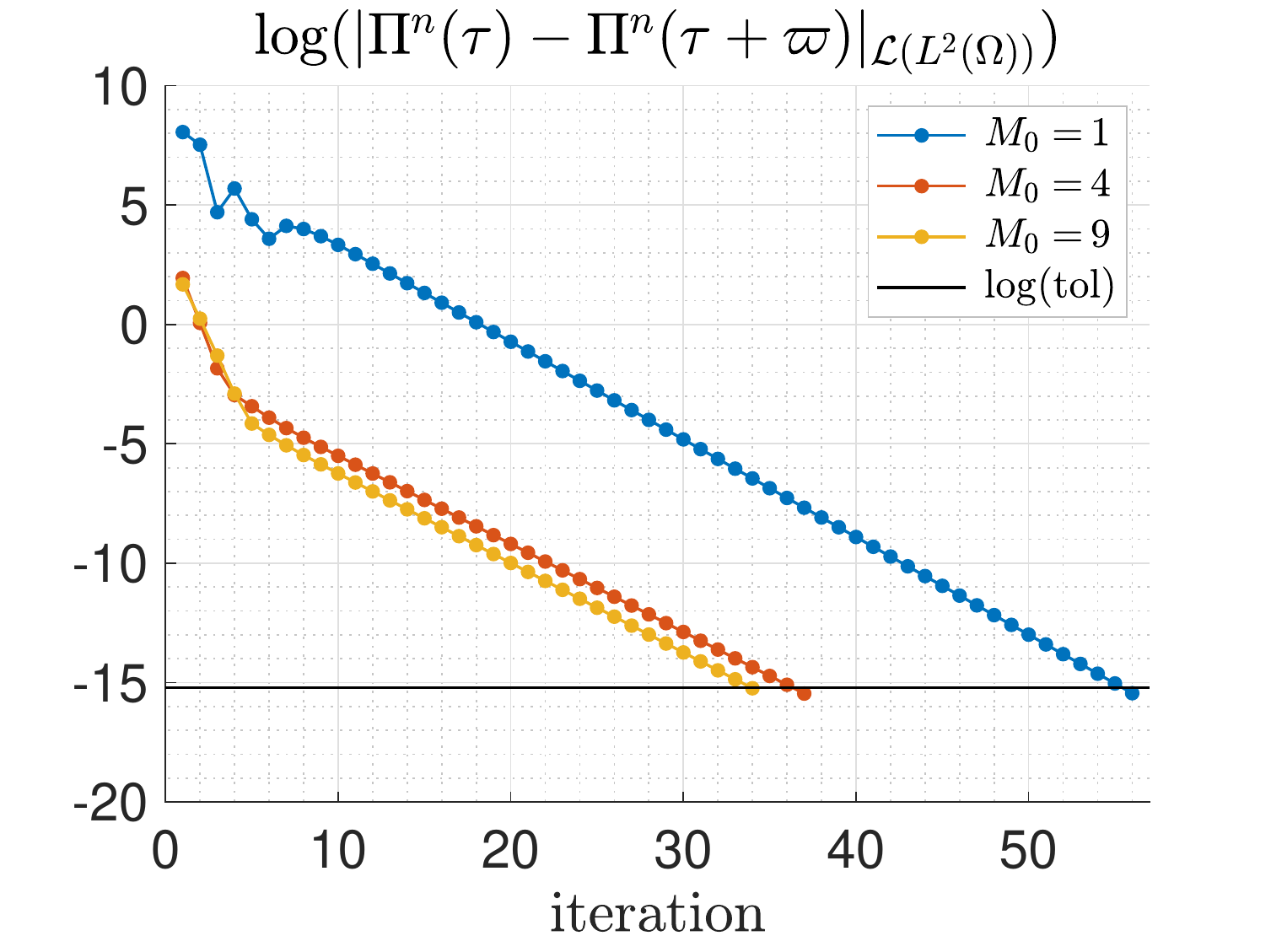}}
\caption{Error evolution for iterations of Algorithm~\ref{Alg:PRE}.}
\label{fig:iterRicPer}
\end{figure}

We would like to report that, in the case of~$4$ and~$9$ actuators the Riccati time-step~$k_{\rm ric}$ as in~\eqref{num-paramRic} turned out to be small enough so that step~\ref{Alg:DRE:squeze-kric} in Algorithm~\ref{Alg:DRE} was never activated. Instead, in the case of~$1$ actuators such step was often activated and  more than once for some time instants~$T$. That is, computing the optimal feedback for a single actuator took more time for each iteration of Algorithm~\ref{Alg:PRE}. Roughly speaking this may suggest that stabilization with a single actuator is (at least; cf.~section~\ref{sS:num.M=1}) more difficult (which somehow agrees with common sense).

\section{Further remarks}\label{S:remarks}
We give additional details and comments on the followed procedure.

\subsection{On the proposed strategy for solving the Riccati equations}\label{sS:tolARE}
For solving the algebraic Riccati equations
within Algorithms~\ref{Alg:ARE} and~\ref{Alg:DRE}, we have set the stopping criteria as
$\frac{\dnorm{\fkT_{(\bfY,\bfB,\bfC)}(\mathbf\Pi)}{}}{ \max\left\{1,\dnorm{\mathbf\Pi}{}\right\} }
<N^{\frac{1}{2}}{\tt eps}^{\frac{1}{2}}$, where~${\tt eps}\approx 10^{-16}$ is the Matlab epsilon/accuracy and~$\dnorm{\Bigcdot}{}\coloneqq\norm{\Bigcdot}{\clL(\bbR^N)}$. This tolerance value is the minimal one used/proposed in the
software~\cite{BennerRicSolver}, which we use hereafter.

Within Algorithm~\ref{Alg:PRE}, for solving the periodic Riccati equation, we have set again the tolerance~$\varepsilon=N^\frac12 {\tt eps}^\frac12$ and replaced the norm in~$\clL(H)$ by the norm in~$\clL(\bbR^N)$ for the discretized equations. 

Following Algorithm~\ref{Alg:DRE}, we solve an algebraic Riccati equation at each time step. 
In order to speed the computations up, we could naturally think of taking a further linear approximation for the nonlinear unknown term in~\eqref{Ric-discj-sch}, namely, we could take~$\bfZ_1\coloneqq\frac12(
\mathbf\Pi^{r+1}\bfB \bfB^\top \mathbf\Pi^{r}+\mathbf\Pi^{r}\bfB \bfB^\top \mathbf\Pi^{r+1})$ instead of~$\bfN\coloneqq\mathbf\Pi^{r}\bfB \bfB^\top \mathbf\Pi^{r}$. By taking such~$\bfZ_1$ (linear on~$\mathbf\Pi^{r}$) we would need to solve a single Lyapunov equation at each time step which would likely be cheaper/faster. An alternative could be to take an Adams--Bashforth linear extrapolation~$\bfZ_0\coloneqq 
2\mathbf\Pi^{r+1}\bfB \bfB^\top \mathbf\Pi^{r+1}-\mathbf\Pi^{r+2}\bfB \bfB^\top \mathbf\Pi^{r+2}$ (independent of~$\mathbf\Pi^{r}$) instead of~$\bfN$.
However, by using an extra approximation for~$\bfN$ we will induce an extra error which will back-propagate over the time interval~$[\tau,\tau+\varpi]$, which we want to avoid (or minimize) when looking for the time $\varpi$-periodic solution of the time $\varpi$-periodic Riccati equation; see Algorithm~\ref{Alg:PRE}.

\subsection{On the Lyapunov equations}
 It is not our goal to discuss details on the numerical solution of the Lyapunov equation~\eqref{Lyap-iter}, but since it plays a crucial role in the solution of the algebraic Riccati equation, we refer the interested reader to the survey~\cite[sect.~5]{BennerSaak13-gamm} where an ADI (alternating direction implicit) based iterative method is proposed for finding low-rank representations/approximations for the solution of Lyapunov matrix equations, and also to the related work~\cite{OpmeerReisWollner13} concerning Lyapunov operator equations in infinite-dimensional spaces and references therein. See also~\cite{LuWachspress91} for discussions on other methods. In this manuscript the Lyapunov equations were solved in factorized form with the matrix sign function~\cite[sect.~IV]{Benner06}.

\subsection{On generalized Riccati equations}
Note that in~\eqref{DiscRicc} we require the computation of the matrix~$\bfX$ involving the inverse of the mass matrix, this is not a problem for the coarse discretizations we  used to compute the solutions~$\mathbf\Pi$ of the Riccati equations. In case we want or need to compute such solutions~$\mathbf\Pi$ in fine discretizations, where we will have larger matrices, the inverse of the mass matrix can be an issue. In that case, an option to overcome this issue could be computing first
the product~$\mathbf\Xi=\bfM^{-1}\mathbf\Pi\bfM^{-1}$, and then recover~$\mathbf\Pi=\bfM\mathbf\Xi\bfM$. Note that from~\eqref{DiscRicc} we find that~$\mathbf\Xi$ solves a more general equation as follows
\[
\bfM\dot{\mathbf\Xi}\bfM+\widehat\bfX^\top \mathbf\Xi\bfM+\bfM\mathbf\Xi \widehat\bfX
-\bfM\mathbf\Xi\widehat\bfB \widehat\bfB^\top \mathbf\Xi\bfM
+\bfC^\top\bfC=0,
\]
where~$\widehat\bfX=\bfM\bfX=-(\bfS_\nu +\bfL^0+\bfL^1-\overline\mu\bfM)$ and~$\widehat\bfB=\bfM\bfB$ (cf.~\cite[Equ.~(15)]{MalqvistPerStill18}, \cite[below Equ.~(8)]{BreitenDolgovStoll-na20}, \cite[Equ.~(4.8)]{Heiland16}). Thus, we can likely avoid the issues associated with the inverse of~$\bfM$,  with the expense of computing the solution of a more general equation.
In any case, the solution of the Riccati equation~$\mathbf\Pi$ is a full matrix at each instant of time time, for positive definite~$\bfC^\top\bfC$, so we have anyway a constraint in the size of~$\mathbf\Pi$. For the algebraic Riccati equation, in the case~$\bfC$ has small rank (when compared to the size of~$\bfM$)  we can expect that~$\mathbf\Pi$ is well approximated by products as~$\mathbf\Pi_\rmf^\top\mathbf\Pi_\rmf$ where~$\mathbf\Pi_\rmf$ has a small rank as well. For the solution of the differential Riccati equations this low-rank phenomenon is less clear. In any case, if~$\mathbf\Pi$ is not necessarily positive definite, then we may need a different approach in Algorithm~\ref{Alg:DRE}, where we have exploited the fact that~$\mathbf\Pi$ is positive definite to guarantee the positive definiteness of~$\bfQ$ for small time-step; see~\eqref{Ric-discj-Q}.

\section{Conclusions}\label{S:conclusion}
At the theoretical level, we have shown that the explicit oblique projections based feedback operator introduced in~\cite{KunRod19-cocv} is able to stabilize parabolic equations with reaction-convection terms~$A_{\rm rc}$ taking values in~$\clL(H,V')+\clL(V,H)$. We have also shown that the solution of the time-periodic Riccati equation can be found by an iterative process. 
At the numerical level, we have discussed general aspects from the finite-elements numerical implementation of stabilizing feedbacks, as
the classical Riccati based feedbacks and  oblique
projections based feedbacks. The stabilizing performance of such feedbacks has been illustrated by results of simulations.

\subsection{Oblique projections as an alternative to Riccati} Oblique projections
feedbacks are an interesting alternative to the Riccati feedbacks, since they are easier to
compute and implement numerically, and because we do not need to save
the solution of the differential Riccati equation, prior to simulations. In particular, the
oblique projections feedback input can be computed online in real time.
Another disadvantage of Riccati based feedbacks is that its computation
is unfeasible for general nonautonomous systems
in the entire unbounded time interval. Thus we  have restricted the numerical computations
to the case of a time-periodic reaction-convection terms, where we can reduce the computations to a finite time interval with length equal to the time-period.  

\subsection{Oblique projections as a nonalternative to Riccati}
An  advantage of the Riccati based feedbacks is that it is less sensitive
with respect to the number and location of the actuators,
and it further minimizes the total spent energy (classical quadratic cost).
So, if the minimization of the spent energy  is important/asked in a given
application and/or if we do not have an enough number of actuators at our disposal, then  the oblique projections
based feedbacks may  be not an alternative to the Riccati based ones. In such case, we have to face the fact that
for fine discretizations
it is difficult, and maybe unfeasible, to compute and save the entire
array with the solution of the input Riccati operator for each discrete instant of time, in the fixed (large)
bounded time interval (e.g., for a large time-period).
To circumvent this issue, we propose to compute the Riccati feedback for a coarse mesh,
and use it to
construct an ``extended'' feedback allowing us to perform simulations in
appropriately refined meshes,
in both spatial and temporal domains. We presented simulations showing that such strategy provides us 
with a stabilizing feedback.

\subsection{Open questions. Possible future works}
Concerning Riccati feedbacks, it is clear that the coarsest
spatial and temporal meshes must be ``fine enough''.
It would be interesting to investigate this point in order to quantify ``how fine'' such meshes must be taken.
This is expected to depend on the given system (free) dynamics.    
We have used Algorithm~\ref{Alg:PRE} to compute the solution of the time-periodic operator differential
Riccati equation. This approach will be expensive (time consuming)
if the time-period~$\varpi$ is large. 
Though we expect the error in Algorithm~\ref{Alg:PRE} to convergence exponentially to zero (cf. Thm.~\ref{T:convRicPer} and Fig.~\ref{fig:iterRicPer}), the  exponential rate can be relatively small and we will need a large number of iterations. Thus it would be interesting to know more about such rate. In~\cite{GusevJohaKagsShirVarga10} the authors are able to compute the solution for large time-periods in a relatively short time. Unfortunately, from the results reported in~\cite[sect.~5]{GusevJohaKagsShirVarga10} the methods evaluated/compared in the same reference are likely not appropriate for computing the periodic matrix Riccati solution for matrices as large as those coming from finite-element discretizations of partial differential equations; the sizes of the matrices considered~\cite[sect.~5.1.1, Table~2, and sect.~5.1.2]{GusevJohaKagsShirVarga10} are far from the number of nodes of the mesh in Fig.~\ref{fig:Mesh_coarse}.

 \medskip\noindent
 {\bf Acknowledgments.} 
The author acknowledges partial support from the Upper Austria Government and 
the Austrian Science
Fund (FWF): P 33432-NBL.

\renewcommand*{\bibfont}{\normalfont\footnotesize}
\bibliographystyle{plainurl}
\bibliography{Expl-Ricc}

\end{document}